\tikzset{nimplies/.style={
		decoration={markings,
			mark= at position 0.5 with {
				\node[transform shape] (tempnode){\tiny/};
			}},postaction={decorate}}}
\tikzset{negated/.style={
        decoration={markings,

            mark= at position 0.5 with {
                \node[transform shape] (tempnode) {$\backslash$};
            }
        },
        postaction={decorate}
    }
}
\newtheorem{theorem}{Theorem}[section]
\newtheorem*{theorem*}{Theorem}
\newtheorem{lemma}[theorem]{Lemma}
\newtheorem{ques}[theorem]{Question}
\newtheorem{proposition}[theorem]{Proposition}
\newtheorem{corollary}[theorem]{Corollary}
\theoremstyle{definition}
\newtheorem{definition}[theorem]{Definition}
\newtheorem{example}[theorem]{Example}
\theoremstyle{remark}
\newtheorem{remark}[theorem]{Remark}
\numberwithin{equation}{section}
\def\ignora#1{}
\renewcommand{\geq}{\geqslant}
\renewcommand{\leq}{\leqslant}
\newcommand{\norm}[1]{\left\Vert#1\right\Vert}
\newcommand{\spn}{\operatorname{span}}
\newcommand{\eps}{\varepsilon}
\newcommand{\Xs}{X^{\ast}}
\newcommand{\xs}{x^{\ast}}
\newcommand{\lxy}{\mathcal{L}(X,Y)}
\newcommand{\iten}{\widehat{\otimes}_\varepsilon}
\newcommand{\pten}{\widehat{\otimes}_\pi}
\newcommand{\alcomment}[1]{{\color{blue}Aleksei: #1\color{black}}}
\newcommand{\e}{\varepsilon}
\newcommand{\n}{\norm}
\newcommand{\bb}{\mathbb}
\newcommand{\du}[1]{{#1}^*}
\begin{document}
\title[ASD2P for infinite cardinals]{Attaining strong diameter two property for infinite cardinals}	

\author {Stefano Ciaci, Johann Langemets, Aleksei Lissitsin} 
\address{Institute of Mathematics and Statistics, University of Tartu, Narva mnt 18, 51009 Tartu, Estonia}
\email{stefano.ciaci@ut.ee, johann.langemets@ut.ee, aleksei.lissitsin@ut.ee}
\thanks{This work was supported by the Estonian Research Council grant (PSG487).}
\urladdr{\url{https://johannlangemets.wordpress.com/}}
\subjclass[2020]{46B04, 46B20}
\keywords{Convex combination of slices, diameter 2 property, octahedral norm, Daugavet property}

\begin{abstract}
    We extend the (attaining of) strong diameter two property to infinite cardinals. In particular, a Banach space has the 1-norming attaining strong diameter two property with respect to $\omega$ (1-ASD2P$_\omega$ for short) if every convex series of slices of the unit ball intersects the unit sphere. We characterize $C(K)$ spaces and $L_1(\mu)$ spaces having the 1-ASD2P$_\omega$. We establish dual implications between the 1-ASD2P$_\omega$, $\omega$-octahedral norms and Banach spaces failing the $(-1)$-ball-covering property. The stability of these new properties under direct sums and tensor products is also investigated. 
\end{abstract}

\maketitle

\section{Introduction}\label{sec: intro}

An important tool for studying the geometry of the unit ball in a Banach space was first observed by J.~Bourgain -- every non-empty relatively weakly open subset of the unit ball of a Banach space contains a finite convex combination of slices \cite[Lemma~II.1]{Ghoussoub1987b}. On the other hand, there are finite convex combinations of slices which fail to be relatively weakly open \cite[Remark~IV.5]{Ghoussoub1987b}. Although in general the converse of Bourgain's lemma fails, it was proved in \cite[Remark~IV.5]{Ghoussoub1987b} that it holds in the positive part of the unit sphere of $L_1[0,1]$ and in \cite[Theorem~2.3]{abrahamsen_relatively_2018} it was shown that it also holds in the unit ball of $C(K)$, whenever $K$ is a scattered compact space. The latter result was subsequently extended to spaces of the type $C_0(K,X)$  in \cite{abrahamsen_banach_2018} and to certain $C^\ast$-algebras in \cite{becerra_guerrero_relatively_2019}.

Recall that in an infinite-dimensional Banach space every non-empty relatively weakly open subset of the unit ball intersects the unit sphere. Therefore, the requirement that every convex combination of slices of the unit ball intersects the unit sphere is a weaker property compared to the converse of Bourgain's lemma. In \cite[Section~3]{abrahamsen_relatively_2018} it is wondered which Banach spaces satisfy this weaker condition. Motivated by this question, the property was studied under the name (P3) in \cite{haller_convex_2017}, (CS) in \cite{lopez-perez_strong_2019} and \emph{attaining strong diameter two property} (ASD2P for short) in the recent preprint \cite{lopez-perez_characterization_2020}. From now on we will use the name ASD2P. 

In \cite[Theorem~3.4]{lopez-perez_strong_2019} it is proved that a Banach space $X$ has the ASD2P if and only if for every finite convex combination of slices $C$ of $B_X$ there are $x,y\in C$ such that $\|x+y\|=2$. Therefore, the ASD2P implies that every finite convex combination of slices has diameter two. This property is called the \emph{strong diameter two property} (SD2P for short) and it was first introduced in \cite{abrahamsen_remarks_2013}. Note that there are spaces with the SD2P, but failing to have the ASD2P \cite[Example~3.3]{lopez-perez_strong_2019}. 

The main focus of this paper is to introduce and study the infinite analogues of the (A)SD2P and to provide examples and several properties of them. In particular, a Banach space $X$ has the 1-ASD2P$_\omega$ if, for every sequence $(S_i)_i$ of slices of its unit ball and for every sequence $(\lambda_i)_i\subset[0,1]$ satisfying $\sum_{i=1}^\infty\lambda_i=1$, one has that $\sum_{i=1}^\infty\lambda_iS_i$ intersects the unit sphere of $X$. For example, the spaces $\ell_\infty$, $C[0,1]$, $L_1[0,1]$,  and $L_\infty[0,1]$ all have the 1-ASD2P$_\omega$ (Corollaries~ \ref{cor: ell infty C[0,1] CS}, \ref{cor: L_infty has CS} and \ref{cor: L_1(mu) CS_omega}). On the other hand, the space $c_0$ has the ASD2P, but fails the 1-ASD2P$_\omega$ (Remark~\ref{rem: def ASD2P}(c)).

Let us now describe the organization of the paper. In Section~\ref{sec: prelim} we introduce some notation and equivalent criteria for the (A)SD2P, which then allow us to extend them to their corresponding infinite versions -- SD2P$_\kappa$ and 1-ASD2P$_\kappa$, where $\kappa$ is an infinite cardinal (Definitions~\ref{def: kappa-SD2P} and \ref{def: kappa-CS}). It is worth pointing out that while the extension of the ASD2P to the 1-ASD2P$_\omega$ is straightforward (namely, by replacing finite convex combinations with convex series of slices), the same approach doesn't produce any new concept for the SD2P (Lemma~\ref{lem: SD2P=infinite SD2P}). We use a different way to extend these properties which will be meaningful for uncountable cardinals too.

Section~\ref{sec: duality} is devoted to establishing the duality between these new diameter two properties, $\kappa$-octahedrality, the failure of the $(-1)$-BCP$_\kappa$ (Definition~\ref{def: kappa-OH}), and the Daugavet property. Recall that a Banach space $X$ is said to have the \emph{Daugavet property} if the equation 
$$\|I+T\|=1+\|T\|$$
holds for every rank-1 operator $T:X\rightarrow X$ (see \cite{werner_recent_2001} for a nice survey on the topic). In Sections~\ref{sec: direct sums} and \ref{sec: tensor products} we investigate the stability of the SD2P$_\kappa$ and 1-ASD2P$_\kappa$ under direct sums and tensor products, respectively. Finally, Section~\ref{sec: examples} is devoted to studying several examples of classes of spaces with and without the SD2P$_\kappa$ or the 1-ASD2P$_\kappa$.

We pass now to introduce some notation. We consider only real infinite-dimensional Banach spaces. For a Banach space $X$, $\Xs$ denotes its topological dual, $B_X$ its closed unit ball, $S_X$ its unit sphere and $\text{dens}(X)$ its density character. A \emph{slice} $S$ of the unit ball is a set of the form 
$$S=\{x\in B_X:x^*(x)\ge1-\varepsilon\},$$
where $x^*\in S_{X^*}$ and $\varepsilon\in (0,1)$. For a set $A$ and a cardinal $\kappa$, we denote by $\mathcal P(A)$ the power set of $A$ and by $\mathcal P_\kappa(A)$ the set of all subsets of $A$ of cardinality at most $\kappa$. We will often replace "$\kappa$" with the symbol "$<\kappa$" to represent its obvious analogue (e.g. $\mathcal P_{<\kappa}(A)$ is the set of all subsets of $A$ of cardinality strictly smaller than $\kappa$). Given two Banach spaces $X$ and $Y$, we denote by $p_X$ the projection from $X\times Y$ onto $X$ and by $i_X$ the isometrical embedding of $X$ into $X\times Y$.

\section{Preliminary results}\label{sec: prelim}

We begin by proving that a set $A$ in a normed space behaves in an additive way with respect to the norm if and only if we can find an element in the dual that attains its norm in every element of $A$.

\begin{lemma}\label{lem: properties additive sets}
    Let $X$ be a normed space, $n\in \mathbb{N}$ and $x_1,\ldots,x_n\in X$. If $\|\sum_{i=1}^n x_i\|=\sum_{i=1}^n\|x_i\|$, then the following hold:
    \begin{itemize}
        \item[(a)] $\|\sum_{i=1}^n t_ix_i\|=\sum_{i=1}^n t_i\|x_i\|$ for every $t_1,\ldots,t_n\ge0$;
        \item[(b)] $\|\sum_{i=1}^n t_ix_i\|\ge\sum_{i=1}^n t_i\|x_i\|$ for every $t_1,\ldots,t_n\in\mathbb R$.
    \end{itemize}
\end{lemma}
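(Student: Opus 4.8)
The plan is to deduce everything from the hypothesis $\|\sum_{i=1}^n x_i\| = \sum_{i=1}^n \|x_i\|$ by producing a single norm-one functional witnessing the additivity, which is the natural "certificate" for this kind of statement.

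\medskip

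\textbf{Step 1: Extract a witnessing functional.} By Hahn--Banach, choose $x^*\in S_{X^*}$ with $x^*\bigl(\sum_{i=1}^n x_i\bigr) = \|\sum_{i=1}^n x_i\| = \sum_{i=1}^n \|x_i\|$. Since $x^*(x_i) \le \|x_i\|$ for every $i$, the only way the sum $\sum_{i=1}^n x^*(x_i)$ can equal $\sum_{i=1}^n \|x_i\|$ is if $x^*(x_i) = \|x_i\|$ for each $i = 1,\dots,n$. This is the key observation and the only place where the hypothesis is used.

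\medskip

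\textbf{Step 2: Prove (a).} For $t_1,\dots,t_n \ge 0$, on one hand $\|\sum_{i=1}^n t_i x_i\| \le \sum_{i=1}^n t_i\|x_i\|$ by the triangle inequality and homogeneity. On the other hand, applying $x^*$ and using Step 1,
\[
\Bigl\|\sum_{i=1}^n t_i x_i\Bigr\| \ge x^*\Bigl(\sum_{i=1}^n t_i x_i\Bigr) = \sum_{i=1}^n t_i\, x^*(x_i) = \sum_{i=1}^n t_i\|x_i\|,
\]
where the inequality here uses $t_i \ge 0$ is not even needed for this direction, but combining the two gives equality.

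\medskip

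\textbf{Step 3: Prove (b).} For arbitrary $t_1,\dots,t_n\in\mathbb{R}$, the upper-bound argument fails, but the lower bound survives verbatim: $\|\sum_{i=1}^n t_i x_i\| \ge x^*\bigl(\sum_{i=1}^n t_i x_i\bigr) = \sum_{i=1}^n t_i\|x_i\|$, using only $x^*(x_i)=\|x_i\|$ and $\|x^*\|=1$. Note this is the inequality as stated (with possibly negative $t_i$, the right-hand side may be negative and the inequality is then trivially informative only when the sum is positive, but it holds in all cases).

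\medskip

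There is essentially no obstacle here: the whole content is the observation in Step 1 that equality in a sum of inequalities forces equality termwise. One should only be slightly careful to note that if some $x_i = 0$ the statement is vacuous for that index, and that the case $\sum_{i=1}^n x_i = 0$ (forcing all $x_i=0$) is degenerate but consistent. A minor subtlety worth a remark is that part (b) is genuinely one-sided: with negative coefficients one cannot expect equality, as the triangle inequality upper bound is lost; the example $x_1 = x_2$ with $t_1 = 1, t_2 = -1$ shows the left side can be $0$ while the right side is $2\|x_1\|$, so indeed $\ge$ is sharp and cannot be improved to equality.
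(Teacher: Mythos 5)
Your proof is correct, but it takes a genuinely different route from the paper's. The paper proves (a) by a purely metric argument: for $t_i\in[0,1]$ it derives a contradiction from the triangle inequality applied to the decomposition $\sum_i t_ix_i+\sum_i(1-t_i)x_i=\sum_i x_i$, then handles $t_i>1$ by rescaling with $t=\max_i t_i$; part (b) is then deduced from (a) by splitting the sum into its positive- and negative-coefficient parts. You instead invoke Hahn--Banach at the outset to produce $x^*\in S_{X^*}$ norming $\sum_i x_i$, observe that equality in $\sum_i x^*(x_i)\le\sum_i\|x_i\|$ forces $x^*(x_i)=\|x_i\|$ termwise, and read off both (a) and (b) immediately. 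Your argument is shorter and arguably more conceptual, and it does not create any circularity with the paper's Proposition~\ref{prop: norm attaining on norm additive} (whose proof uses this lemma), because you build the functional directly from Hahn--Banach rather than from that proposition; in effect you are proving the single-tuple case of that proposition's implication (ii)$\implies$(i) up front. What the paper's route buys is that the lemma stays elementary and duality-free, which fits its role as the first step in the chain leading to the functional-existence result. One small slip in your closing aside: with $x_1=x_2$ and $t_1=1$, $t_2=-1$ the right-hand side of (b) is $\|x_1\|-\|x_2\|=0$, not $2\|x_1\|$, so that particular example exhibits equality rather than strictness; to see that the inequality in (b) can be strict, take instead $x_1=e_1$, $x_2=e_2$ in $\ell_1^2$ with $t_1=1$, $t_2=-1$, where the left side is $2$ and the right side is $0$. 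This does not affect the validity of the proof itself.
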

\begin{proof}
    (a). Suppose at first that $t_1,\ldots,t_n\in[0,1]$. If $\|\sum_{i=1}^n t_ix_i\|<\sum_{i=1}^n t_i\|x_i\|$, then
    $$\|\sum_{i=1}^n x_i\|\leq \|\sum_{i=1}^n t_ix_i\|+\|\sum_{i=1}^n (1-t_i)x_i\| <\sum_{i=1}^n t_i\|x_i\|+\sum_{i=1}^n (1-t_i)\|x_i\|=\sum_{i=1}^n\|x_i\|,$$
    which is a contradiction, hence the claim holds whenever $t_1,\ldots,t_n\in[0,1]$. Suppose now that $t_1,\ldots,t_n\ge0$. If there is some $t_j>1$, then call $t:=\max \{t_i\colon i\in\{1,\dots,n\}\}$ and notice that
    $$\|\sum_{i=1}^n t_ix_i\|=t\Big\|\sum_{i=1}^n \frac{t_i}{t}x_i\Big\|=t\sum_{i=1}^n\frac{t_i}{t}\|x_i\|=\sum_{i=1}^n t_i\|x_i\|.$$
    (b) follows from (a), indeed, if $t_1,\ldots,t_n\in\mathbb R$, then
    $$\|\sum_{i=1}^n t_ix_i\|\ge\|\sum_{\substack{i=1\\ t_i\ge0}}^n t_ix_i\|-\|\sum_{\substack{i=1\\ t_i<0}}^n t_ix_i\|\ge\sum_{\substack{i=1\\ t_i\ge0}}^n t_i\|x_i\|-\sum_{\substack{i=1\\ t_i<0}}^n |t_i|\|x_i\|=$$
    $$=\sum_{i=1}^n t_i\|x_i\|.$$
\end{proof}

\begin{proposition}\label{prop: norm attaining on norm additive}
    Let $X$ be a normed space and $A\subset X$. The following are equivalent:
    \begin{itemize}
        \item[(i)] There is $x^*\in X^*\setminus\{0\}$ such that $x^*(x)=\|x^*\|\|x\|$ for every $x\in A$;
        \item[(ii)] $\|\sum_{i=1}^{n} x_i\|=\sum_{i=1}^{n}\|x_i\|$ for every $x_1,\ldots,x_n\in A$.
    \end{itemize}
\end{proposition}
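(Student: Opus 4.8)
The plan is to prove the two implications separately, with the substance residing in (ii)$\Rightarrow$(i), where a Hahn--Banach separation/extension argument and a compactness argument must be combined.

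The direction (i)$\Rightarrow$(ii) is the routine one: given such an $x^*$ of norm $\|x^*\|=:c>0$, for any $x_1,\dots,x_n\in A$ we have $\|\sum_i x_i\|\ge \frac{1}{c}\,x^*\bigl(\sum_i x_i\bigr)=\frac1c\sum_i x^*(x_i)=\sum_i\|x_i\|$, while the reverse inequality is the triangle inequality; so equality holds. No normalization issue arises since we may replace $x^*$ by $x^*/c$ at the end, or simply carry $c$ through.

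For (ii)$\Rightarrow$(i), first I would dispose of trivial cases: if $A\subseteq\{0\}$ then any norm-one functional works, so assume $A$ contains some $x_0\neq 0$. The key observation is that (ii) for pairs, combined with Lemma~\ref{lem: properties additive sets}(b), shows that for any finite $F\subseteq A$ there is a functional $x_F^*\in S_{X^*}$ with $x_F^*(x)=\|x\|$ for all $x\in F$: indeed by (ii) $\|\sum_{x\in F}x\|=\sum_{x\in F}\|x\|$, pick $x_F^*\in S_{X^*}$ norming $\sum_{x\in F}x$, and then for each fixed $y\in F$ apply Lemma~\ref{lem: properties additive sets}(b) to the family $F$ with coefficients $t_y=1$ and $t_x=0$ for... — more carefully, write $\sum_{x\in F}x = y + \sum_{x\in F,\,x\neq y}x$ and use that $x_F^*$ norms the whole sum together with $x_F^*(x)\le\|x\|$ for every summand to force $x_F^*(y)=\|y\|$. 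Thus the set $\{x^*\in B_{X^*}: x^*(x)=\|x\|\ \text{for all }x\in F\}$ is non-empty for every finite $F\subseteq A$; these sets are clearly weak$^*$-closed and have the finite intersection property (the witness for $F_1\cup F_2$ lies in both). Since $B_{X^*}$ is weak$^*$-compact by Banach--Alaoglu, the total intersection $\bigcap_{F}\{x^*\in B_{X^*}:x^*|_F=\|\cdot\||_F\}$ is non-empty; any $x^*$ in it satisfies $x^*(x)=\|x\|$ for every $x\in A$, and $x^*\neq 0$ because $x^*(x_0)=\|x_0\|>0$. Finally note $\|x^*\|=1$ since $x^*(x_0)=\|x_0\|$ forces $\|x^*\|\ge 1$, giving the desired conclusion with $\|x^*\|=1$ (and hence $x^*(x)=\|x^*\|\|x\|$).

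The main obstacle is organizing the passage from the pairwise/finite hypothesis to a single functional working on all of $A$ simultaneously: one must resist the temptation to take a weak$^*$ limit point of $(x_F^*)$ along the net of finite subsets (which would require checking the limit still norms each element), and instead phrase it cleanly as a finite-intersection-property argument inside the weak$^*$-compact ball. Everything else is bookkeeping with the triangle inequality and Lemma~\ref{lem: properties additive sets}.
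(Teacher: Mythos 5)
Your proof is correct, but it takes a genuinely different route from the paper's. For (ii)$\Rightarrow$(i) the paper defines a functional directly on $\spn(A)$ by $x^*\bigl(\sum_{i=1}^{n} t_i x_i\bigr):=\sum_{i=1}^{n} t_i\|x_i\|$, uses Lemma~\ref{lem: properties additive sets}(b) both to verify well-definedness and to bound $\|x^*\|\le 1$, and then extends by Hahn--Banach; no compactness enters. You instead reduce to finite subsets $F\subseteq A$ (where the elementary observation that a functional norming $\sum_{x\in F}x$ must norm each summand does all the work, and Lemma~\ref{lem: properties additive sets} is not really needed), and then glue via the finite intersection property of the weak$^*$-closed sets $\{x^*\in B_{X^*}:x^*\restricted_F=\|\cdot\|\restricted_F\}$ inside the weak$^*$-compact ball. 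Your argument trades the paper's well-definedness check on the span for an appeal to Banach--Alaoglu; the paper's version is the more elementary of the two (Hahn--Banach extension only), while yours isolates the finite case cleanly and makes the passage to arbitrary $A$ purely topological. Both are complete; your handling of the degenerate case $A\subseteq\{0\}$ and of the normalization $\|x^*\|=1$ is fine, and Banach--Alaoglu applies to normed (not necessarily complete) spaces, so no hypothesis is violated.
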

\begin{proof}
  (i)$\implies$(ii). $\|\sum_{i=1}^{n} x_i\|\ge\|x^*\|^{-1}x^*(\sum_{i=1}^{n} x_i)=\sum_{i=1}^{n}\|x_i\|$ holds for every $x_1,\ldots,x_n\in A$.
  
  (ii)$\implies$(i). Define $x^*:\spn (A)\rightarrow\mathbb R$ by $x^*(\sum_{i=1}^{n} t_i x_i):=\sum_{i=1}^{n} t_i\|x_i\|$. We claim that $x^*$ is well defined. Suppose that $\sum_{i=1}^{n} t_ix_i=\sum_{j=1}^{m} s_j y_j$ for some $t_1,\ldots,t_n,s_1,\ldots,s_m\in\mathbb R$ and $x_1,\ldots,x_n,y_1,\ldots,y_m\in A$. By Lemma~\ref{lem: properties additive sets}(b),
  $$0=\|\sum_{i=1}^{n} t_ix_i-\sum_{j=1}^{m} s_j y_j\|\ge\sum_{i=1}^{n} t_i\|x_i\|-\sum_{j=1}^{m} s_j\|y_j\|.$$
  We can switch the roles of $\sum_{i=1}^{n} t_ix_i$ and $\sum_{j=1}^{m} s_j y_j$, hence $\sum_{i=1}^{n} t_i\|x_i\|=\sum_{j=1}^{m} s_j\|y_j\|$, that is the claim. Notice that $x^*$ is linear and  Lemma~\ref{lem: properties additive sets}(b) implies that $\|x^*\|\le1$. By Hahn--Banach theorem we can extend $x^*$ to an element of $X^*$ with the desired properties.
\end{proof}

G.~Godefroy and B.~Maurey (see \cite{godefroy_metric_1989}) introduced octahedral norms in order to characterize the Banach spaces that contain a copy of $\ell_1$. Recall that a normed space $X$ is called \emph{octahedral} if one of the following equivalent conditions holds:
\begin{itemize}
    \item[(i)] For every finite-dimensional subspace $Y\subset X$ and $\varepsilon>0$ there exists $x\in S_X$ such that
    $$\|y+\lambda x\|\ge(1-\varepsilon)(\|y\|+|\lambda|)\text{ for every }y\in Y\text{ and }\lambda\in\mathbb R;$$
    \item[(ii)] For every $x_1,\ldots,x_n\in S_X$ and $\varepsilon>0$ there exists $x\in S_X$ such that
    $$\|x_i+x\|\ge2-\varepsilon\text{ for every }i\in\{1,\ldots,n\}.$$
\end{itemize}

In \cite{ciaci_characterization_2021}, the following extensions of octahedral norms and ball-covering properties (see \cite{Cheng2006, guirao_remarks_2019}) to infinite cardinals were considered.

\begin{definition}[see {\cite[Definition~2.3 and 5.3]{ciaci_characterization_2021}}]\label{def: kappa-OH}
    Let $X$ be a normed space and $\kappa$ an infinite cardinal. We say that $X$ is \emph{$\kappa$-octahedral} if for every subspace $Y\subset X$ such that $\text{dens}(Y)\le\kappa$ and $\varepsilon>0$ there is $x\in S_X$ such that
    \begin{equation}\label{eq: k-OH}
        \|y+\lambda x\|\ge(1-\varepsilon)(\|y\|+|\lambda|)\text{ for every }y\in Y\text{ and }\lambda\in\mathbb R.
    \end{equation}
    We say that $X$ \emph{fails the $(-1)$-BCP$_\kappa$} if in addition (\ref{eq: k-OH}) holds for $\varepsilon=0$.
\end{definition}

Let us now introduce some notation. We say that $A \subset X^*$ \emph{$\lambda$-norms} $B \subset X$, for some $\lambda\in (0,1)$, if for every $x \in B$ there is $x^* \in A \setminus \{0\}$ such that $x^*(x) \geq \lambda \|x^*\|\|x\|$, and that $A$ \emph{norms} $B$  if $A$ $\lambda$-norms $B$ for every $\lambda\in(0,1)$. In addition, for $\lambda\in (0,1]$, we denote
$$B^\lambda:=\bigg\{x^*\in X^*\setminus\{0\}:\sup_{x\in B\setminus\{0\}}\frac{x^*(x)}{\|x^*\|\|x\|}\ge\lambda\bigg\}.$$

\begin{remark}\label{rem: properties of notation}
    Let $X$ be a normed space, $\lambda\in (0,1]$ and $A,B\subset X$.
    \begin{itemize}
        \item[(a)] It is clear that $x^*\in B^\lambda$ if and only if $\mu x^*\in B^\lambda$ for every $\mu\in(0,\infty)$. Moreover
$$B^\lambda=\{f(x)\cdot x:x\in B\setminus\{0\}\}^\lambda,$$
where $f:B\setminus\{0\}\rightarrow(0,\infty)$ is any function.
In particular, when we consider the set $B^\lambda$ we can restrict ourselves only to norm 1 elements.
\item[(b)] If $A\subset B$, then $A^\lambda\subset B^\lambda$.
    \end{itemize}
\end{remark}

    \begin{lemma}\label{lem: properties of notation}
Let $X$ be a normed space, $\lambda,\mu\in(0,1]$ and $x,y\in X$. The following implications hold:
\[
\{x\}^\lambda \cap \{y\}^\mu \neq \emptyset \implies x \in (\{y\}^\mu)^\lambda   \implies \n{x+y} \geq \lambda \n x + \mu \n y.
\]
In addition, if $x_1, \dots, x_n \in S_X$ and $\n{\sum_{i=1}^{n} x_i} \geq n-\e$ for some $\varepsilon\in[0,1)$, then there is $\du x \in \du X$ such that $x_i \in \{\du x\}^{(1-\e)} \subset (\{x_j\}^{(1-\e)})^{(1-\e)}$ for all $i,j\in\{1,\ldots,n\}$.
\end{lemma}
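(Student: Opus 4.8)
The plan is to chase definitions, the key observation being that for single elements the two memberships $z^{*}\in\{x\}^{\lambda}$ and $x\in\{z^{*}\}^{\lambda}$ (the latter read through the canonical embedding $X\hookrightarrow X^{**}$) are literally the same inequality $z^{*}(x)\geq\lambda\|z^{*}\|\,\|x\|$. Beyond this I will only use Remark~\ref{rem: properties of notation}: that $B^{\lambda}$ is invariant under positive rescaling of its members (so one may always assume the relevant functional has norm one) and that $B\mapsto B^{\lambda}$ is monotone in $B$.

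For the first arrow, pick $z^{*}\in\{x\}^{\lambda}\cap\{y\}^{\mu}$, which already forces $x,y\neq 0$, and normalise to $\|z^{*}\|=1$ via Remark~\ref{rem: properties of notation}(a), so that $z^{*}(x)\geq\lambda\|x\|$ and $z^{*}(y)\geq\mu\|y\|$. Since $z^{*}\in\{y\}^{\mu}$ witnesses $\sup_{w^{*}\in\{y\}^{\mu}}w^{*}(x)/(\|w^{*}\|\,\|x\|)\geq\lambda$, we get $x\in(\{y\}^{\mu})^{\lambda}$ by definition. For the second arrow, note that $x\in(\{y\}^{\mu})^{\lambda}$ forces $x\neq 0$ and $\{y\}^{\mu}\neq\emptyset$, hence $y\neq 0$; for each $\delta>0$ extract, from the definition of the supremum, a norm-one $z^{*}\in\{y\}^{\mu}$ with $z^{*}(x)\geq(\lambda-\delta)\|x\|$, and since $z^{*}\in\{y\}^{\mu}$ also gives $z^{*}(y)\geq\mu\|y\|$, we obtain $\|x+y\|\geq z^{*}(x+y)\geq(\lambda-\delta)\|x\|+\mu\|y\|$; letting $\delta\to 0^{+}$ finishes it.

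For the last assertion I would first manufacture the functional: by Hahn--Banach choose $x^{*}\in S_{X^{*}}$ with $x^{*}\big(\sum_{i=1}^{n}x_{i}\big)=\big\|\sum_{i=1}^{n}x_{i}\big\|\geq n-\varepsilon$ (the sum is nonzero as $\varepsilon<1\leq n$). Since $x^{*}(x_{j})\leq 1$ for every $j$, discarding the $i$-th summand yields $x^{*}(x_{i})\geq(n-\varepsilon)-(n-1)=1-\varepsilon$ for every $i$; equivalently $x^{*}\in\{x_{i}\}^{(1-\varepsilon)}$, i.e.\ $x_{i}\in\{x^{*}\}^{(1-\varepsilon)}$. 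Fixing $j$, the membership $x^{*}\in\{x_{j}\}^{(1-\varepsilon)}$ reads $\{x^{*}\}\subset\{x_{j}\}^{(1-\varepsilon)}$, so Remark~\ref{rem: properties of notation}(b) gives $\{x^{*}\}^{(1-\varepsilon)}\subset(\{x_{j}\}^{(1-\varepsilon)})^{(1-\varepsilon)}$, and chaining produces $x_{i}\in\{x^{*}\}^{(1-\varepsilon)}\subset(\{x_{j}\}^{(1-\varepsilon)})^{(1-\varepsilon)}$ for all $i,j$, as required.

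I do not expect a genuine obstacle; the proof is essentially definition-unwinding. The only points demanding care are bookkeeping ones: keeping straight whether a given $B^{\lambda}$ lives in $X^{*}$ or in $X^{**}$, checking that the degenerate cases $x=0$ or $y=0$ make the relevant sets empty so that the implications remain vacuously true, and confirming that the normalisation $\|z^{*}\|=1$ is legitimate (Remark~\ref{rem: properties of notation}(a)) and that the passage $\delta\to 0^{+}$ is harmless since the right-hand side depends continuously on $\delta$.
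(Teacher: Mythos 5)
Your proof is correct and follows essentially the same route as the paper's: both unwind the definitions using the symmetry of the membership $z^*\in\{x\}^\lambda \iff x\in\{z^*\}^\lambda$, extract a near-optimal norm-one functional from the supremum for the second implication (the paper compresses your $\delta\to 0^+$ step into a single line), and for the last assertion take a Hahn--Banach norming functional for $\sum_{i=1}^n x_i$, apply the ``discard one summand'' estimate to get $x^*(x_j)\geq 1-\varepsilon$, and conclude via Remark~\ref{rem: properties of notation}(b). No gaps.
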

\begin{proof}
The first implication is clear. If $x \in (\{y\}^\mu)^\lambda$, then 
\[
\n{x+y} \geq \sup_{\du x \in \{y\}^\mu \cap S_{\du X}} \du x(x+y) \geq \lambda \n x + \mu \n y.
\]
Now let $x_1,\dots, x_n\in S_X$ be such that $\n{\sum_{i=1}^{n} x_i} \geq n-\e$ for some $\varepsilon\in[0,1)$. Fix $\du x \in \left\{\sum_{i=1}^{n} x_i\right\}^1 \cap S_{\du X}$, then
$$n-1+x^*(x_j)\geq \du x(\sum_{i=1}^{n} x_i)=\|\sum_{i=1}^{n} x_i\| \geq n-\e$$
so that $\du x(x_j) \geq 1 - \e$ for all $j\in\{1,\ldots,n\}$. This, together with Remark~\ref{rem: properties of notation}(b), implies that $x_i \in \{\du x\}^{(1-\e)} \subset (\{x_j\}^{(1-\e)})^{(1-\e)}$ for all $i,j\in\{1,\ldots,n\}$.
\end{proof}

\begin{proposition}\label{prop: characterization kappa OH and fail -1 BCP}
  Let $X$ be a normed space and $\kappa$ an infinite cardinal. Consider the following statements:
  \begin{itemize}
      \item[(a)] For every $A\in\mathcal P_{<\kappa}(S_X)$ and $\varepsilon>0$ there is $y\in S_X$ such that
        $$\|x+y\|\ge2-\varepsilon\text{ for every }x\in A;$$
      \item[(a')] $\mathcal P_{<\kappa}(S_X)\subset\bigcap_{\lambda\in(0,1)}\bigcup_{x\in X}\mathcal P((\{x\}^\lambda)^\lambda)$;
      \item[(a'')] $X$ is $(<\kappa)$-octahedral;
      \item[(b)] For every $A\in\mathcal P_{<\kappa}(S_X)$ there is $y\in S_X$ such that
        $$\|x+y\|=2\text{ for every }x\in A;$$
      \item[(b')] $\mathcal P_{<\kappa}(S_X)\subset\bigcup_{x\in X}\mathcal P((\{x\}^1)^1)$;
      \item[(b'')] $X$ fails the $(-1)$-BCP$_{<\kappa}$.
  \end{itemize}
  Then (a)$\iff$(a')$\iff$(a'')$\impliedby$(b)$\iff$(b')$\iff$(b'').
\end{proposition}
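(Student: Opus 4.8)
The plan is to prove the three displayed equivalences in three groups: the ``primed'' conditions, which are just (a) and (b) rewritten in the $B^\lambda$--calculus set up before Remark~\ref{rem: properties of notation} and are handled by Lemmas~\ref{lem: properties additive sets} and \ref{lem: properties of notation} and Remark~\ref{rem: properties of notation}; the ``double--primed'' conditions, which are the infinite--cardinal form of the classical equivalence between the two descriptions of an octahedral norm and are handled via Definition~\ref{def: kappa-OH}; and the trivial implication (b)$\implies$(a). For (a)$\implies$(a'): given $A\in\mathcal P_{<\kappa}(S_X)$ and $\lambda\in(0,1)$, apply (a) with $\varepsilon:=1-\lambda$ to get $y\in S_X$ with $\n{a+y}\geq 2-\varepsilon$ for all $a\in A$, and use the last assertion of Lemma~\ref{lem: properties of notation} (with $n=2$) to place each $a$ in $(\{y\}^{1-\varepsilon})^{1-\varepsilon}=(\{y\}^{\lambda})^{\lambda}$. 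For the converse, if $A\subset(\{x\}^{\lambda})^{\lambda}$ for some $x\in X$ and some $\lambda\in(0,1)$, discard the trivial case $A=\emptyset$, note $x\neq0$ (since $(\{0\}^{\lambda})^{\lambda}=\emptyset$), rescale so that $x\in S_X$ (which by Remark~\ref{rem: properties of notation}(a) does not change the set), and apply Lemma~\ref{lem: properties of notation} to get $\n{a+x}\geq\lambda\n a+\lambda\n x=2\lambda$ for all $a\in A$; letting $\lambda\uparrow1$ gives (a). The equivalence (b)$\iff$(b') runs identically with $\lambda=1$: from $\n{a+y}=2$, Lemma~\ref{lem: properties of notation} with $\varepsilon=0$ gives $a\in(\{y\}^{1})^{1}$; and if $A\subset(\{x\}^{1})^{1}$ with $x\in S_X$ (after normalizing), then for every $a\in A$ and every norm--one $x^{*}\in\{x\}^{1}$ one has $x^*(x)=1$ and hence $\n{a+x}\geq x^{*}(a)+1$, so taking the supremum over such $x^{*}$ (which is $\geq\n a=1$ because $a\in(\{x\}^{1})^{1}$) gives $\n{a+x}=2$. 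Finally, since $\{x\}^{1}\subset\{x\}^{\lambda}$ forces $(\{x\}^{1})^{1}\subset(\{x\}^{\lambda})^{\lambda}$ by Remark~\ref{rem: properties of notation}(b), (b')$\implies$(a'), hence (b)$\implies$(a).

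Next, (a)$\iff$(a'') and (b)$\iff$(b''). The implications (a'')$\implies$(a) and (b'')$\implies$(b) are immediate: given $A\in\mathcal P_{<\kappa}(S_X)$, apply Definition~\ref{def: kappa-OH} to $Y:=\cspn(A)$, which has density $<\kappa$, with $\varepsilon/2$ (resp.\ with $\varepsilon=0$), and specialize the resulting inequality to $y=a\in A$ and $\lambda=1$. For the reverse implications I would fix a subspace $Y$ with $\dens(Y)<\kappa$ and first record the reduction: if $x\in S_X$ satisfies $\n{v+x}=2$ for all $v\in S_Y$, then (using that $Y$ is a linear subspace and Lemma~\ref{lem: properties additive sets}(a) applied to the pairs $v,x$ and $-v,x$) one in fact has $\n{y+\lambda x}=\n y+|\lambda|$ for all $y\in Y$ and $\lambda\in\mathbb R$, with the obvious approximate analogue when $2$ is replaced by $2-\varepsilon'$. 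Hence it suffices to produce a suitable $x\in S_X$. For (a''): choose a $\delta$--net $N$ of $S_Y$ with $|N|<\kappa$, apply (a) to $N$ to get $x\in S_X$ with $\n{u+x}\geq2-\varepsilon'$ for all $u\in N$, and conclude with the standard perturbation estimate ($\n{v+x}\geq\n{u+x}-\n{v-u}$ for a near--by net point $u$, then homogenize in $y$ and $\lambda$). For (b''): pick $D\in\mathcal P_{<\kappa}(S_X)$ dense in $S_Y$, apply (b') to $D$ to get $x\in X$ with $D\subset(\{x\}^{1})^{1}$, normalized so that $x\in S_X$; then observe that $(\{x\}^{1})^{1}\cap S_X=\{v\in S_X:\n{v+x}=2\}$ (Lemma~\ref{lem: properties of notation} together with the supremum estimate above) is norm--closed in $S_X$ because $v\mapsto\n{v+x}$ is continuous, so $S_Y\subset\overline{D}\subset(\{x\}^{1})^{1}$ and we are done.

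The hard part will be (b)$\implies$(b''). Unlike the $\varepsilon>0$ situation, the exact equality $\n{v+x}=2$ is not preserved under the $\delta$--perturbation that a finite net allows, so one genuinely needs a subset $D$ that is \emph{dense} in $S_Y$ and of cardinality $<\kappa$, together with the closedness of $(\{x\}^{1})^{1}\cap S_X$. Such a $D$ is available when $\kappa>\omega$ because then $\dens(S_Y)=\dens(Y)<\kappa$; when $\kappa=\omega$ the condition $\dens(Y)<\omega$ forces $Y$ to be finite--dimensional and one falls back on the known finite--dimensional case (cf.\ \cite{ciaci_characterization_2021}). Everything else---the normalizations, the reduction ``$\n{v+x}=2$ on $S_Y$ implies the octahedrality inequality on $Y$'', and the perturbation estimate in the (a'') case---is routine and uses only Lemmas~\ref{lem: properties additive sets} and~\ref{lem: properties of notation} and Remark~\ref{rem: properties of notation}.
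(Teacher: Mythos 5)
Your argument is correct and follows essentially the same route as the paper's: the primed equivalences via Lemma~\ref{lem: properties of notation} and Remark~\ref{rem: properties of notation}, and the double-primed ones by passing to a dense subset of cardinality $<\kappa$ and extending by continuity. The paper writes out only the (a)-chain and declares the (b)-chain ``identical''; you supply the detail that actually makes the identical argument work in the exact case, namely that $(\{x\}^1)^1\cap S_X=\{v\in S_X:\|v+x\|=2\}$ is norm-closed, so that containment of a dense subset of $S_Y$ yields containment of all of $S_Y$. Your reduction from ``$\|v+x\|=2$ for all $v\in S_Y$'' to the full inequality $\|y+\lambda x\|\geq\|y\|+|\lambda|$ via Lemma~\ref{lem: properties additive sets}(a) (and its approximate analogue for the (a)-chain) replaces the paper's direct computation with the functionals $x^*_{\pm y}$; both work equally well.

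The one place you stop short is the case $\kappa=\omega$ of (b$'$)$\implies$(b$''$): as you observe, a nontrivial finite-dimensional $Y$ has no \emph{finite} dense subset of $S_Y$, so the dense-set-plus-closedness argument is unavailable, and the finite-net perturbation that rescues the (a)-chain does not preserve the exact equality $\|v+x\|=2$. You defer this to \cite{ciaci_characterization_2021}. Be aware that the paper's own ``the proof is identical'' covers this case no better --- its instruction to ``find a set $A$ of cardinality $<\kappa$ dense in $Y$'' is equally vacuous when $\kappa=\omega$ and $Y\neq\{0\}$ --- so you have isolated the only genuinely delicate step rather than introduced a new gap. Still, if the finite-dimensional exact statement is not literally available in the cited reference, this step requires its own argument (a weak$^*$ cluster point of companions for finer and finer nets only lands in $X^{**}$ and the norm is merely weak$^*$ lower semicontinuous, so the naive limiting procedure fails), and you should either locate the precise statement in \cite{ciaci_characterization_2021} or supply a proof before considering the proposition fully established for $\kappa=\omega$.
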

\begin{proof}
(a)$\implies$(a'). Let $A\in\mathcal P_{<\kappa}(S_X)$ and $\varepsilon\in(0,1)$. By hypothesis there exists $y\in S_X$ such that $\|x+y\|\ge2-\varepsilon$ for every $x\in A$. Lemma~\ref{lem: properties of notation} implies that $x\in(\{y\}^{(1-\varepsilon)})^{(1-\varepsilon)}$ for every $x\in A$, that is $A\subset (\{y\}^{(1-\varepsilon)})^{(1-\varepsilon)}$.

(a')$\implies$(a). Let $A\in\mathcal P_{<\kappa}(S_X)$ and $\varepsilon\in(0,1)$. By assumption, there is $y\in X\setminus\{0\}$ such that $A\subset(\{y\}^{(1-\varepsilon/2)})^{(1-\varepsilon/2)}$. By Remark~\ref{rem: properties of notation}(a), we can assume that $y\in S_X$, thus Lemma~\ref{lem: properties of notation} shows that $\|x+y\|\ge2-\varepsilon$ for every $x\in A$.

(a'')$\implies$(a) is obvious. (a')$\implies$(a''). Fix a subspace $Y\subset X$ with density character $<\kappa$ and $\varepsilon>0$. Find a set $A$ of cardinality $<\kappa$ dense in $Y$ and define
$$\tilde A:=\{y/\|y\|:y\in (A\cup(-A))\setminus\{0\}\}\in\mathcal{P}_{<\kappa}(S_X).$$
By assumption, there is $x\in S_X$ such that for every $y/\|y\|\in \tilde A$ we can find $x^*_y\in S_{X^*}$ satisfying
$$x^*_y(x)\ge1-\varepsilon\text{ and }x^*_y(y)\ge(1-\varepsilon)\|y\|.$$
Suppose that $\lambda\ge0$. Then, for every $y\in A$,
$$\|y+\lambda x\|\ge x^*_y(y)+\lambda x^*_y(x)\ge(1-\varepsilon)(\|y\|+\lambda)=(1-\varepsilon)(\|y\|+|\lambda|).$$
If $\lambda<0$, then, for every $y\in A$,
$$\|y+\lambda x\|=\|-y-\lambda x\|\ge x^*_{-y}(-y)-\lambda x^*_{-y}(x)\ge(1-\varepsilon)(\|y\|+|\lambda|).$$

The proof of (b)$\iff$(b')$\iff$(b'') is identical and (b)$\implies$(a) is obvious.
\end{proof}

It is known that that the converse of (b)$\implies$(a) in Proposition~\ref{prop: characterization kappa OH and fail -1 BCP} does not hold in general \cite[Theorem~5.13]{ciaci_characterization_2021}. A similar characterization to the one given in Proposition~\ref{prop: characterization kappa OH and fail -1 BCP} also holds for the (A)SD2P. First we need some preliminary results.

    \begin{lemma}\label{lem: SD2P=infinite SD2P}
        Let $X$ be a normed space and $\mathfrak S$ a countable collection of slices of $B_X$. Consider the following statements:
        \begin{itemize}
            \item[(a)] For every $\lambda\in(0,1)$ there is a set $A\subset B_X$ that visits every slice of $\mathfrak S$ and there is $x^*\in S_{X^*}$ such that $x^*(x)\ge\lambda$ for every $x\in A$;
            \item[(b)] Every finite convex combination of slices of $\mathfrak S$ intersects $B_X\setminus\lambda B_X$ for every $\lambda\in(0,1)$;
            \item[(b')] Every convex series of slices of $\mathfrak S$ intersects $B_X\setminus\lambda B_X$ for every $\lambda\in(0,1)$.
        \end{itemize}
        Then (a)$\implies$(b)$\iff$(b'). Moreover, if $\mathfrak S$ is finite, then (b)$\implies$(a).
    \end{lemma}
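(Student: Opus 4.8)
The plan is to prove the three parts separately, since the implications involved are of different natures. The equivalence (b)$\iff$(b') is the crux and should be handled first: (b')$\implies$(b) is trivial (a finite convex combination is a convex series with all but finitely many coefficients zero, or one reindexes), so the real content is (b)$\implies$(b'). For this, suppose $\sum_{i=1}^\infty \lambda_i S_i$ is a convex series of slices from $\mathfrak S$, and fix $\lambda\in(0,1)$. The idea is to choose $\lambda' \in (\lambda, 1)$, find $N$ large enough that $\sum_{i>N}\lambda_i$ is so small that replacing the tail $\sum_{i>N}\lambda_i S_i$ by an arbitrary point of $B_X$ changes norms by less than $\lambda'-\lambda$, normalize the head $\sum_{i=1}^N \lambda_i$ to a finite convex combination $\sum_{i=1}^N \mu_i S_i$ (with $\mu_i = \lambda_i/\sum_{j\le N}\lambda_j$), apply (b) to get a point $x$ in that finite convex combination with $\|x\| > \lambda''$ for a suitable intermediate $\lambda''$, and then form $\big(\sum_{j\le N}\lambda_j\big) x + \sum_{i>N}\lambda_i x_i$ for any choices $x_i \in S_i$. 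A short triangle-inequality estimate shows this lies in $\sum \lambda_i S_i$ and has norm exceeding $\lambda$. One subtlety: if $\sum_{i>N}\lambda_i = 0$ for some $N$ the series is essentially finite and there is nothing to do; otherwise the normalization constant $\sum_{j\le N}\lambda_j$ is strictly positive for $N$ large, so the division is legitimate.

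Next, (a)$\implies$(b). Given $\lambda\in(0,1)$, pick $\lambda' \in (\lambda,1)$ and apply (a) to obtain $A\subset B_X$ visiting every slice of $\mathfrak S$ together with $x^*\in S_{X^*}$ with $x^*\restricted A \ge \lambda'$. For a finite convex combination $\sum_{i=1}^n t_i S_i$ with $S_i\in\mathfrak S$, choose $a_i\in A\cap S_i$; then $x^*\big(\sum_i t_i a_i\big) \ge \lambda' > \lambda$, so $\big\|\sum_i t_i a_i\big\| > \lambda$, i.e.\ the combination meets $B_X\setminus\lambda B_X$. This is immediate.

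Finally, the "moreover" part: (b)$\implies$(a) when $\mathfrak S = \{S_1,\dots,S_n\}$ is finite. Fix $\lambda\in(0,1)$ and choose $\lambda'\in(\lambda,1)$. By (b), for every finite convex combination — in particular for every point of the form $\sum_{i=1}^n \frac1n S_i$ — there is a point of norm $> \lambda'$; but we need more, namely a single $x^*$ and a single set $A$ working for all slices at once. The approach is: apply Proposition~\ref{prop: norm attaining on norm additive} in spirit. Concretely, by (b) applied to the particular convex combination $\frac1n(S_1+\dots+S_n)$, there are $x_i\in S_i$ with $\big\|\frac1n\sum_i x_i\big\| > \lambda'$, hence $\big\|\sum_i x_i\big\| > n\lambda' > n - \e$ for a suitable $\e\in[0,1)$ once $\lambda'$ is close enough to $1$ — wait, this only gives $n\lambda'$, not $n-\e$ with small $\e$, unless $\lambda'$ is taken very close to $1$. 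I would instead argue: by Lemma~\ref{lem: properties of notation} (its last assertion), once $\|\sum_i x_i\| \ge n - \e$ there is $x^*\in S_{X^*}$ with $x^*(x_i) \ge 1-\e$ for all $i$; choosing $\e = 1-\lambda$ and $\lambda'$ with $n\lambda' \ge n - (1-\lambda)$, i.e.\ $\lambda' \ge 1 - \tfrac{1-\lambda}{n}$, which is an admissible value in $(\lambda,1)$, we get $x^*$ with $x^*(x_i)\ge \lambda$ for all $i$. Then $A := \{x_1,\dots,x_n\}$ visits every slice of $\mathfrak S$ and $x^*(x)\ge\lambda$ on $A$, as required. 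The main obstacle is this last step: making sure the uniform $x^*$ can be extracted, and getting the quantifiers on $\lambda,\lambda',\e$ to line up so that $\lambda' < 1$ genuinely suffices — this is exactly where finiteness of $\mathfrak S$ is used, since the bound $\lambda' \ge 1 - (1-\lambda)/n$ blows up to $1$ as $n\to\infty$.
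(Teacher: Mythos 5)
Your proposal is correct and follows essentially the same route as the paper: (a)$\implies$(b) by evaluating the norming functional on the convex combination, (b)$\implies$(b') by truncating the tail of the series and renormalizing the finite head, and the moreover part by applying (b) to the uniform combination $\frac1n\sum_i S_i$ and extracting the common functional via the last assertion of Lemma~\ref{lem: properties of notation} (your bookkeeping $\lambda'\ge 1-(1-\lambda)/n$ is exactly the paper's choice $\|\sum_i x_i\|\ge n-\varepsilon$ with $\varepsilon=1-\lambda$). No gaps.
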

    \begin{proof}
        (a)$\implies$(b). Fix $S_1,\ldots,S_n\in\mathfrak S$, $\lambda_1,\ldots,\lambda_n\in[0,1]$ such that $\sum_{i=1}^{n}\lambda_i=1$ and $\lambda\in(0,1)$. Find a set $A=\{x_1,\ldots,x_n\}\subset S_X$ and $x^*\in S_{X^*}$ such that $x_i\in S_i$ and $x^*(x_i)\ge\lambda$ for every $i\in\{1,\ldots,n\}$. Thus
        $$\|\sum_{i=1}^{n}\lambda_ix_i\|\ge x^*(\sum_{i=1}^{n}\lambda_ix_i)\ge\lambda.$$
        
        (b)$\implies$(b'). Fix a convex series of slices $\sum_{i=1}^{\infty}\lambda_i S_i$ from $\mathfrak S$ and $\varepsilon>0$. Find $n\in\mathbb N$ such that $\sum_{i=n+1}^\infty\lambda_i<\varepsilon/3$ and $x_1,\ldots,x_n,y\in B_X$ such that
        $$\|\sum_{i=1}^n\lambda_ix_i+(\sum_{i=n+1}^\infty\lambda_i)y\|\ge1-\varepsilon/3\text{ and }x_i\in S_i\text{ for }i\in\{1,\ldots,n\}.$$
        Fix $x_i\in S_i$ for every $i\ge n+1$, hence
        $$\|\sum_{i=1}^{\infty}\lambda_ix_i\|\ge\|\sum_{i=1}^n\lambda_ix_i\|-\varepsilon/3\ge\|\sum_{i=1}^n\lambda_ix_i+(\sum_{i=n+1}^\infty\lambda_i)y\|-2\varepsilon/3\ge 1-\varepsilon.$$
        
        (b')$\implies$(b) is obvious. For the moreover part, assume that $\mathfrak S=\{S_1,\ldots,S_n\}$ and fix $\varepsilon>0$. By our hypothesis we can find $x_i\in S_i\cap S_X$ for every $i\in\{1,\ldots,n\}$ such that $\|\sum_{i=1}^{n} n^{-1}x_i\|\ge1-n^{-1}\varepsilon$, that is $\|\sum_{i=1}^{n} x_i\|\ge n-\varepsilon$. Thanks to Lemma~\ref{lem: properties of notation}, there is $x^*\in S_{X^*}$ such that $x^*(x_i)\ge1-\varepsilon$ for every $i\in\{1,\ldots,n\}$.
    \end{proof}
    
    \begin{lemma}\label{lem: characterization CS}
        Let $X$ be a normed space and $\mathfrak S$ a finite (respectively, countable) collection of slices of $B_X$. The following are equivalent:
        \begin{itemize}
            \item[(i)] There is a set $A\subset B_X$ that visits every slice of $\mathfrak S$ and there is $x^*\in S_{X^*}$ such that $x^*(x)=\|x\|$ for every $x\in A$;
            \item[(ii)] Every finite convex combination (respectively, convex series) of slices in $\mathfrak S$ intersects $S_X$.
        \end{itemize}
    \end{lemma}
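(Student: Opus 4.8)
The plan is to prove Lemma~\ref{lem: characterization CS} by reducing it to the previously established machinery, treating the finite and countable cases in parallel and then bridging the two using Lemma~\ref{lem: SD2P=infinite SD2P}.

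For the direction (i)$\implies$(ii), I would start with a finite convex combination $\sum_{i=1}^n \lambda_i S_i$ with $S_i\in\mathfrak S$. Picking $x_i\in A\cap S_i$ and the common norming functional $x^*\in S_{X^*}$ with $x^*(x_i)=\|x_i\|$, Proposition~\ref{prop: norm attaining on norm additive} tells us the $x_i$ form a norm-additive set, so by Lemma~\ref{lem: properties additive sets}(a) we get $\|\sum_{i=1}^n \lambda_i x_i\| = \sum_{i=1}^n \lambda_i \|x_i\|$. Since each $x_i\in S_i\subset B_X$ lies in a slice, $\|x_i\|\ge 1-\varepsilon_i$ can be taken arbitrarily close to $1$ — but wait, the $x_i$ are fixed once $A$ is fixed. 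The correct move is: the element $\sum_{i=1}^n \lambda_i x_i$ lies in the convex combination of slices; we need it on $S_X$. Here I would instead observe that $x^*(\sum_i \lambda_i x_i) = \sum_i \lambda_i \|x_i\|$ and also that $x^*$ restricted to $A$ attaining the norm means $x^*(x_i)=\|x_i\|$, hence $\|x^*\|=1$ forces $\|x_i\|\le 1$ with $x^*(x_i)=\|x_i\|$; combined with $x_i$ being in a slice $S_i=\{x:\ x_i^*(x)\ge 1-\varepsilon_i\}$ this does not immediately give norm one. The actual argument must be: one applies (i) not to a single $A$ but exploits that slices can be intersected with arbitrarily thin sub-slices — so I would redo (i) more carefully, noting that in fact (i) gives, for the sub-slices $S_i' = S_i \cap \{x^*\ge 1-\delta\}$ (nonempty since $A$ visits them appropriately after shrinking), points of norm $\ge 1-\delta$; passing $\delta\to 0$ and using a compactness/limiting argument along the lines of Lemma~\ref{lem: SD2P=infinite SD2P}'s proof yields a point of $S_X$ in the (closed) convex combination. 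For the countable case, (i)$\implies$(ii) for convex series follows by the same tail-truncation estimate as in (b)$\implies$(b') of Lemma~\ref{lem: SD2P=infinite SD2P}.

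For (ii)$\implies$(i), in the finite case $\mathfrak S=\{S_1,\dots,S_n\}$: for each $k$, apply (ii) to the convex combination $\sum_{i=1}^n n^{-1}S_i$ to obtain $x_i^{(k)}\in S_i\cap S_X$ with $\|\sum_i n^{-1} x_i^{(k)}\| \ge 1 - 1/k$, i.e. $\|\sum_i x_i^{(k)}\|\ge n - n/k$. By Lemma~\ref{lem: properties of notation} there is $x^{*}_k\in S_{X^*}$ with $x^*_k(x_i^{(k)})\ge 1 - n/k$ for all $i$. The obstacle is that the $x_i^{(k)}$ vary with $k$, so one does not directly get a single set $A$ and a single $x^*$ with exact equality $x^*(x)=\|x\|$. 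I would resolve this by taking a weak$^*$-cluster point $x^*$ of $(x^*_k)_k$ in $B_{X^*}$ and a suitable limiting choice of points: for each $i$, let $S_i^{\circ}$ shrink — more precisely, pass to subsequences so that the slices defining $S_i$, say $S_i = \{x^*_{(i)}\ge 1-\varepsilon_{(i)}\}$, the points $x_i^{(k)}$ can be chosen (re-applying (ii) to ever-thinner convex combinations of the sub-slices $S_i\cap\{x^*_k \ge 1 - n/k\}$) so that in the limit we obtain $z_i$ with $x^*(z_i)=\|z_i\|=1$ and $z_i\in S_i$; then $A=\{z_1,\dots,z_n\}$ works. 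This diagonal/limiting step is the main obstacle: making the cluster point of the functionals simultaneously norm each limit point exactly, while keeping the limit points inside the original (closed) slices. For the countable case, I would build $A$ as an increasing union: apply the finite case to $\{S_1,\dots,S_n\}$ for each $n$ is not enough since the norming functional must be common, so instead apply (ii) directly to convex series $\sum_i \lambda_i S_i$ with $\lambda_i$ concentrated so as to force, via the same truncation argument plus Lemma~\ref{lem: properties of notation}, a single $x^*\in S_{X^*}$ with $x^*(x_i)\ge 1-\varepsilon$ for all $i$ simultaneously, and then let $\varepsilon\to 0$ through a weak$^*$-limit of functionals together with a limit of the witnessing points, exactly as in the finite case.

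I expect the hard part to be the limiting argument in (ii)$\implies$(i): one must produce a \emph{single} functional attaining its norm \emph{exactly} on a set meeting all the slices, starting only from approximate data (functionals $x^*_k$ that approximately norm approximate points), and the weak$^*$-compactness of $B_{X^*}$ gives the functional but not automatically the exact norm-attainment on limit points — this needs the points to be chosen in a nested, shrinking family of sub-slices so that both the points and the functionals converge compatibly. Everything else — the equivalences with the finite convex combination formulation, the passage from finite to convex series, and the use of Proposition~\ref{prop: norm attaining on norm additive} to repackage "$x^*(x)=\|x\|$ on $A$" as norm-additivity — is routine given the lemmas already in the excerpt.
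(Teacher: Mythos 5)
Your proposal does not close either direction; both of the ``hard limiting arguments'' you describe are artifacts of missing an elementary step, and as written they would fail. For (i)$\implies$(ii) the obstacle you run into (the points of $A$ need not have norm one) dissolves by normalizing: if $x_i\in A\cap S_i$ with $S_i=\{x\in B_X: y_i^*(x)\ge 1-\varepsilon_i\}$, then $\|x_i\|\ge 1-\varepsilon_i>0$ and, since $\|x_i\|\le 1$, the point $x_i/\|x_i\|$ still lies in $S_i$ and satisfies $x^*(x_i/\|x_i\|)=1$; hence $\sum_i\lambda_i x_i/\|x_i\|$ belongs to the convex combination (or series) and has norm exactly $1$. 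Your proposed detour through sub-slices $S_i\cap\{x^*\ge 1-\delta\}$ is unjustified --- hypothesis (i) only says $A$ visits the slices of $\mathfrak S$, not these smaller sets --- and a compactness/limiting argument could at best place a point in the \emph{closure} of the convex combination, which is not what (ii) asserts.

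For (ii)$\implies$(i) you have effectively weakened the hypothesis to an approximate one ($\|\sum_i n^{-1}x_i^{(k)}\|\ge 1-1/k$), whereas (ii) gives \emph{exact} data in a single application: intersecting $\sum_{i}2^{-i}S_i$ (resp.\ $\sum_{i=1}^n n^{-1}S_i$) with $S_X$ yields $x_i\in S_i$ with $\|\sum_i 2^{-i}x_i\|=1$; since also $\sum_i 2^{-i}\|x_i\|\le 1$, equality forces $\|x_i\|=1$ for every $i$ and $\|\sum_i 2^{-i}x_i\|=\sum_i 2^{-i}\|x_i\|$, whence $\|\sum_{i=1}^n x_i\|=\sum_{i=1}^n\|x_i\|$ for all $n$ by Lemma~\ref{lem: properties additive sets}(a), and Proposition~\ref{prop: norm attaining on norm additive} produces the single functional with exact norm attainment. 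No sequence of functionals, no weak$^*$ cluster points, no diagonal argument. The scheme you propose instead is precisely the gap you flag yourself: a weak$^*$ cluster point $x^*$ of $(x^*_k)_k$ paired with limits (weak or otherwise) of the points $x_i^{(k)}$ does not control $x^*(z_i)$, so exact norm attainment on the limit points is never obtained, and nothing in your sketch repairs this. The lemma is genuinely a soft consequence of Proposition~\ref{prop: norm attaining on norm additive} once one notices that a single convex series with strictly positive weights already encodes the norm-additivity of the whole sequence.
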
    
    \begin{proof}
        We will prove only the case when $\mathfrak S$ is countable since the proof when $\mathfrak S$ is finite is identical.
        
        (i)$\implies$(ii). Fix a convex series of slices $\sum_{i=1}^{\infty}\lambda_i S_i$ of $\mathfrak S$. By hypothesis there are a sequence $(x_i)_i\subset S_X$ and $x^*\in S_{X^*}$ such that $x_i\in S_i$ and $x^*(x_i)=1$ for every $i\in\mathbb N$. It is clear that $\|\sum_{i=1}^{\infty}\lambda_ix_i\|\ge x^*(\sum_{i=1}^{\infty}\lambda_ix_i)=1$.
        
        (ii)$\implies$(i). Let $\mathfrak S=\{S_i:i\in\mathbb N\}$. For every $i\in\mathbb N$ we can find $x_i\in S_i\cap S_X$ such that $1=\|\sum_{i=1}^{\infty} 2^{-i}x_i\|=\sum_{i=1}^{\infty}2^{-i}\|x_i\|$. Therefore, 
        $$\|\sum_{i=1}^n x_i\|=\sum_{i=1}^n\|x_i\|$$
        holds for every $n\in\mathbb N$ and Proposition~\ref{prop: norm attaining on norm additive} provides the claim.
    \end{proof}
    
    Finally we can characterize the (A)SD2P without mentioning convex combinations. Let us recall their definitions.
    
    \begin{definition}\label{def: cs sd2p}
        Let $X$ be a normed space and $Y$ a subspace of $X^*$. We say that the couple $(X,Y)$ has the \emph{strong diameter two property} (SD2P for short) if every finite convex combination of slices in $B_X$ defined by functionals in $Y$ intersects $B_X\setminus\lambda B_X$ for every $\lambda\in(0,1)$. 
        
        We say that the couple $(X,Y)$ has the \emph{attaining strong diameter two property} (ASD2P for short) if every finite convex combination of slices in $B_X$ defined by functionals in $Y$ intersects the unit sphere.
    \end{definition}
    
    The original definitions of the (A)SD2P were not given in terms of pairs, but only the pairs $(X,X^*)$ and $(X^*,X)$ were considered separately, nevertheless it is more convenient in the following to consider pairs in order to avoid to repeat each statement twice.
    
    Notice that the original definition of the SD2P, as the name suggests, was given in terms of diameter two convex combinations of slices, but, as already noted in \cite[Theorem~3.1]{lopez-perez_strong_2019}, the definition is in fact equivalent to the one given in Definition~\ref{def: cs sd2p}, so that it becomes obvious that the ASD2P implies the SD2P.
    
    \begin{proposition}\label{prop: characterization CS and SD2P}
      Let $X$ be a normed space and $Y$ a subspace of $X^*$. Consider the following statements:
      \begin{itemize}
          \item[(a)] $(X,Y)$ has the SD2P;
          \item[(a')] For every finite set $A\subset S_Y$ and $\lambda\in(0,1)$ there are $B\subset S_X$ and $x^*\in S_{X^*}$ such that $B$ $\lambda$-norms $A$ and $x^*(x)\ge\lambda$ for every $x\in B$;
          \item[(a'')] $\mathcal P_{<\omega}(S_Y)\subset\bigcap_{\lambda\in(0,1)}\bigcup_{x^*\in X^*}\mathcal{P}((\{x^*\}^\lambda\cap X)^\lambda)$;
          \item[(b)] $(X,Y)$ has the ASD2P;
          \item[(b')] For every finite set $A\subset S_Y$ and $\lambda\in(0,1)$ there are $B\subset S_X$ and $x^*\in S_{X^*}$ such that $B$ $\lambda$-norms $A$ and $x^*(x)=1$ for every $x\in B$;
          \item[(b'')] $\mathcal P_{<\omega}(S_Y)\subset\bigcap_{\lambda\in(0,1)}\bigcup_{x^*\in X^*}\mathcal{P}((\{x^*\}^1\cap X)^\lambda)$.
      \end{itemize}
      Then (a)$\iff$(a')$\iff$(a'')$\impliedby$(b)$\iff$(b')$\iff$(b'').
    \end{proposition}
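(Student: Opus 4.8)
The plan is to argue as in the proof of Proposition~\ref{prop: characterization kappa OH and fail -1 BCP}, with two changes: the functionals cutting out the slices are now constrained to lie in $Y$, and the role played there by Lemma~\ref{lem: properties of notation} is taken over, for the chain involving $\varepsilon>0$, by Lemma~\ref{lem: SD2P=infinite SD2P}, while Proposition~\ref{prop: norm attaining on norm additive} (equivalently Lemma~\ref{lem: characterization CS}) plays the analogous role for the chain involving $\varepsilon=0$. Once the two chains (a)$\iff$(a')$\iff$(a'') and (b)$\iff$(b')$\iff$(b'') are established, the implication (b)$\implies$(a) is free: the pair $(B,x^*)$ produced by (b') already witnesses (a'), since $x^*(x)=1\ge\lambda$ for every $x\in B$.

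For the chain (a)$\iff$(a')$\iff$(a''): to prove (a)$\implies$(a') I would, given a finite $A=\{y_1^*,\dots,y_n^*\}\subset S_Y$ and $\lambda\in(0,1)$, put $S_i:=\{x\in B_X:y_i^*(x)\ge\lambda\}$ (a legitimate slice, as $1-\lambda\in(0,1)$) and apply Lemma~\ref{lem: SD2P=infinite SD2P} to the finite family $\mathfrak S=\{S_1,\dots,S_n\}$, whose hypothesis~(b) is exactly what the SD2P of $(X,Y)$ supplies; this yields, for $\lambda$ as close to $1$ as desired, a set meeting every $S_i$ and an $x^*\in S_{X^*}$ bounded below on it by $\lambda$, and normalising the chosen points (which keeps them in the $S_i$, since the slices are defined by ``$\ge$'') gives the required $B\subset S_X$. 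To prove (a')$\implies$(a), given a finite convex combination $\sum_{i=1}^m\lambda_iS_i$ with $S_i=\{x\in B_X:y_i^*(x)\ge 1-\delta_i\}$, $y_i^*\in S_Y$, and given $\lambda\in(0,1)$, I would fix $\eta\in(0,1)$ exceeding $\lambda$ and all the $1-\delta_i$, apply (a') to $\{y_1^*,\dots,y_m^*\}$ with parameter $\eta$, and observe that a point $z_i$ of the resulting $B$ that $\eta$-norms $y_i^*$ then lies in $S_i$ and satisfies $x^*(z_i)\ge\eta$, so $\|\sum_i\lambda_iz_i\|\ge x^*(\sum_i\lambda_iz_i)\ge\eta>\lambda$. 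Finally (a')$\iff$(a'') is a direct rewriting of the definitions: ``$B$ $\lambda$-norms $A$'' says $A\subset B^\lambda$, and ``$\|x^*\|=1$ with $x^*(x)\ge\lambda$ on $B$'' says $B\subset\{x^*\}^\lambda\cap X$, whence $B^\lambda\subset(\{x^*\}^\lambda\cap X)^\lambda$ by Remark~\ref{rem: properties of notation}(b); the converse normalises $x^*$ using Remark~\ref{rem: properties of notation}(a) and passes to a slightly larger parameter, exactly as in Proposition~\ref{prop: characterization kappa OH and fail -1 BCP}.

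For the chain (b)$\iff$(b')$\iff$(b''): the argument is the same, the only genuinely new observation being needed for (b)$\implies$(b'). There the ASD2P yields $x_i\in S_i$ with $\|\tfrac1n\sum_ix_i\|=1$, and since $\sum_i\tfrac1n=1$ and each $\|x_i\|\le1$ this forces $\|x_i\|=1$ for all $i$ and $\|\sum_ix_i\|=\sum_i\|x_i\|$, so Proposition~\ref{prop: norm attaining on norm additive} produces $x^*\in S_{X^*}$ with $x^*(x_i)=1$ for every $i$, which is precisely the strengthening of (a') asked in (b'). For (b')$\implies$(b) the same computation as above now gives $\|\sum_i\lambda_iz_i\|\ge x^*(\sum_i\lambda_iz_i)=1$, hence equality, so the convex combination meets $S_X$; and (b')$\iff$(b'') is the identical translation of notation as for (a'), now with $\{x^*\}^1$ in place of $\{x^*\}^\lambda$.

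The step I expect to require the most care is the uniform handling of the $\varepsilon$-room: the points furnished by the (A)SD2P lie in $B_X$ rather than on $S_X$, and the slices are cut out by inequalities of the form ``$\ge 1-\varepsilon$'', so the auxiliary parameters $\lambda$ and $\eta$ and the normalisation steps must be chosen so that every point ends up in the slice and in the sublevel set of $x^*$ it is meant to. This is the same bookkeeping already carried out in Lemmas~\ref{lem: SD2P=infinite SD2P} and~\ref{lem: characterization CS}, and involves no conceptual difficulty.
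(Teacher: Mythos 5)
Your proposal is correct and takes essentially the same approach as the paper, which likewise obtains (a)$\iff$(a') from Lemma~\ref{lem: SD2P=infinite SD2P}, (b)$\iff$(b') from Lemma~\ref{lem: characterization CS} (via Proposition~\ref{prop: norm attaining on norm additive}), and treats (a')$\iff$(a'') and (b')$\iff$(b'') as notational restatements with (b)$\implies$(a) immediate. You have simply written out the slice/normalisation bookkeeping that the paper leaves implicit.
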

    \begin{proof}
    (a)$\iff$(a') follows from Lemma~\ref{lem: SD2P=infinite SD2P} and (a') is only a restatement of the more concise (a'').
    
    (b)$\iff$(b') follows from Lemma~\ref{lem: characterization CS}, (b')$\iff$(b'') is identical to (a')$\iff$(a'') and (b)$\implies$(a) is obvious.
    \end{proof}

       It is known that the converse to (b)$\implies$(a) in Proposition~\ref{prop: characterization CS and SD2P} does not hold in general \cite[Example~3.3]{lopez-perez_strong_2019}.

       The most natural way to extend the (A)SD2P to their countable analogues is to consider convex series instead of finite convex combinations of slices. Lemma~\ref{lem: SD2P=infinite SD2P} shows that, for the SD2P, this approach doesn't generate any new concept. On the other hand, Proposition~\ref{prop: characterization CS and SD2P} suggests a new approach that doesn't involve convex combinations and that, as a byproduct, is meaningful for uncountable cardinals too.
    
    \begin{definition}\label{def: kappa-SD2P}
    Let $X$ be a normed space, $Y$ a subspace of $X^*$ and $\kappa$ an infinite cardinal. We say that the couple $(X,Y)$ has the \emph{strong diameter two property with respect to $\kappa$} (SD2P$_\kappa$ for short) if
    $$\mathcal P_\kappa(S_Y)\subset\bigcap_{\lambda\in(0,1)}\bigcup_{x^*\in X^*}\mathcal{P}((\{x^*\}^\lambda\cap X)^\lambda),$$
    or more explicitly, for every set $A\subset S_Y$ of cardinality at most $\kappa$ and $\lambda\in(0,1)$ there are $B\subset S_X$ and $x^*\in S_{X^*}$ such that $B$ $\lambda$-norms $A$ and $x^*(x)\ge\lambda$ for every $x\in B$.
    
    For the couple $(X, X^\ast)$ we simply say that $X$ has the SD2P$_\kappa$ and for the couple $(X^\ast, X)$ we say that $X^*$ has the weak$^*$ SD2P$_\kappa$.
\end{definition}

\begin{definition}\label{def: kappa-CS}
     Let $X$ be a normed space, $Y$ a subspace of $X^*$ and $\kappa$ an infinite cardinal. We say that the couple $(X,Y)$ has the \emph{1-norming attaining strong diameter two property with respect to $\kappa$} (1-ASD2P$_\kappa$ for short) if
     $$\mathcal P_\kappa(S_Y)\subset\bigcup_{x^*\in X^*}\mathcal{P}((\{x^*\}^1\cap X)^1),$$
     or more explicitly, for every set $A\subset S_Y$ of cardinality at most $\kappa$ there are $B\subset S_X$ and $x^*\in S_{X^*}$ such that $B$ norms $A$ and $x^*(x)=1$ for every $x\in B$.
     
     For the couple $(X, X^\ast)$ we simply say that $X$ has the 1-ASD2P$_\kappa$ and for the couple $(X^\ast, X)$ we say that $X^*$ has the weak$^*$ 1-ASD2P$_\kappa$.
     \end{definition}

Notice that $X$ can have at most the SD2P$_{<|X^*|}$.

\begin{remark}\label{rem: def ASD2P}
~
\begin{itemize}
    \item[(a)] In Definitions~\ref{def: kappa-SD2P} and \ref{def: kappa-CS} we can replace $S_Y$ with $Y\setminus\{0\}$. In fact, given $A\subset Y\setminus\{0\}$ and $B\subset S_X$, $B$ $\lambda$-norms $A$ if and only if $B$ $\lambda$-norms the set $\{y/\|y\|:y\in A\}$, for some $\lambda\in (0,1)$.
    \item[(b)] Thanks to Lemma~\ref{lem: characterization CS}, the 1-ASD2P$_\omega$ corresponds to the property that every convex series of slices in the unit ball intersects the unit sphere. The same, due to Lemma~\ref{lem: SD2P=infinite SD2P}, can't be said about the SD2P$_\omega$.
    \item[(c)] Notice that the 1-ASD2P$_{<\omega}$ and ASD2P are actually distinct properties. In fact $c_0$ has the ASD2P \cite[Theorem~2.4]{abrahamsen_relatively_2018}, but it fails the 1-ASD2P$_{<\omega}$ due to Proposition~\ref{prop: X CSP and X sep OH}. On the other hand, the SD2P$_{<\omega}$ as in Definition~\ref{def: kappa-SD2P} naturally corresponds to the original SD2P as in Definition~\ref{def: cs sd2p} thanks to Proposition~\ref{prop: characterization CS and SD2P}.
\end{itemize}
\end{remark}

\begin{remark}
    It is natural also to consider the concept of \emph{1-norming strong diameter 2 property with respect to $\kappa$} given by requiring in Definition~\ref{def: kappa-SD2P} that $B$ norms $A$ and of \emph{attaining strong diameter 2 property with respect to $\kappa$} given by requiring in Definition~\ref{def: kappa-CS} that $B$ $\lambda$-norms $A$ for some fixed $\lambda\in(0,1)$. Many results contained in the following sections can be adapted to these definitions too, nevertheless in this paper we are not going to investigate these two properties in detail.
\end{remark}

\section{Duality}\label{sec: duality}

Recall that a Banach space $X$ has the SD2P (respectively, $X$ is octahedral) if and only if $X^*$ is octahedral (respectively, $X^*$ has the weak$^*$ SD2P) (see \cite[Remark~II.5.2]{godefroy_metric_1989} and \cite[Corollary~2.2]{becerra_guerrero_octahedral_2014}).

Our next aim is to show that similar statements also hold for bigger cardinals.

\begin{lemma}\label{lem: X SD2P iff X* OH}
    If $X$ is a Banach space, then
    $$\bigcap_{\lambda\in(0,1)}\bigcup_{x^*\in X^*}\mathcal P((\{x^*\}^\lambda)^\lambda\cap X^*)=\bigcap_{\lambda\in(0,1)}\bigcup_{x^*\in X^*}\mathcal P((\{x^*\}^\lambda\cap X)^\lambda).$$
\end{lemma}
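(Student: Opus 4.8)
The plan is to prove the two inclusions separately; the inclusion $\supseteq$ (that is, the right-hand collection is contained in the left-hand one) is elementary, while the inclusion $\subseteq$ rests on Goldstine's theorem.

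\textbf{Easy inclusion.} First I would record that, under the canonical isometric embedding $X\hookrightarrow X^{**}$, one has $\{x^*\}^\lambda\cap X\subseteq\{x^*\}^\lambda$ for every $x^*\in X^*$ and $\lambda\in(0,1)$: indeed, for $x\in X$ the ratio $x^*(x)/(\|x^*\|\|x\|)$ is the same whether $x$ is viewed in $X$ or in $X^{**}$. Applying the monotonicity of the operation $B\mapsto B^\lambda$ (Remark~\ref{rem: properties of notation}(b)), together with the compatibility of the canonical pairings $X^*\subseteq X^{***}$, gives $(\{x^*\}^\lambda\cap X)^\lambda\subseteq(\{x^*\}^\lambda)^\lambda\cap X^*$ for each $x^*$ and $\lambda$. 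Taking unions over $x^*\in X^*$ and then intersections over $\lambda\in(0,1)$ yields the inclusion.

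\textbf{Hard inclusion.} Fix $\mathcal A$ in the left-hand collection; we may assume $\mathcal A\neq\emptyset$, so $\mathcal A\subseteq X^*\setminus\{0\}$. Fix $\mu\in(0,1)$ and choose $\lambda\in(\mu,1)$. By hypothesis there is $x^*\in X^*\setminus\{0\}$, which we normalize so that $x^*\in S_{X^*}$ (Remark~\ref{rem: properties of notation}(a)), with $\mathcal A\subseteq(\{x^*\}^\lambda)^\lambda\cap X^*$. I claim $z^*:=x^*$ witnesses $\mathcal A\subseteq(\{z^*\}^\mu\cap X)^\mu$. Let $y^*\in\mathcal A$. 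Since $y^*\in(\{x^*\}^\lambda)^\lambda$, fixing $\lambda'\in(\mu,\lambda)$ there is $x^{**}\in\{x^*\}^\lambda$, which we normalize so that $x^{**}\in S_{X^{**}}$, with $x^{**}(y^*)\geq\lambda'\|y^*\|$; moreover $x^{**}(x^*)\geq\lambda$. Now apply Goldstine's theorem: choose a net $(x_\gamma)\subseteq B_X$ with $x_\gamma\to x^{**}$ in the weak$^*$ topology of $X^{**}$. Then $x^*(x_\gamma)\to x^{**}(x^*)\geq\lambda$ and $y^*(x_\gamma)\to x^{**}(y^*)\geq\lambda'\|y^*\|$, so there is $\gamma$ with $x^*(x_\gamma)>\mu$ and $y^*(x_\gamma)>\mu\|y^*\|$. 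Put $x:=x_\gamma$; then $x\neq 0$, $\|x\|\leq 1$, $x^*(x)>\mu\geq\mu\|x\|$, so $x\in\{z^*\}^\mu\cap X$, and $y^*(x)>\mu\|y^*\|\geq\mu\|y^*\|\|x\|$, so $y^*\in(\{z^*\}^\mu\cap X)^\mu$. As $y^*\in\mathcal A$ and $\mu\in(0,1)$ were arbitrary, $\mathcal A$ lies in the right-hand collection.

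The only point requiring care is the bookkeeping of the embeddings $X\subseteq X^{**}$ and $X^*\subseteq X^{***}$ and the induced pairings; the actual content is just the weak$^*$-approximation of functionals on $X^*$ by elements of $X$ provided by Goldstine's theorem, the resulting loss from $\lambda$ to $\mu$ being absorbed by the intersection over all parameters. Observe that the same $z^*$ serving all $y^*\in\mathcal A$ is immediate, being exactly the meaning of $\mathcal A\subseteq(\{x^*\}^\lambda)^\lambda\cap X^*$, whereas the descent to $X$ is performed for each $y^*$ separately (the Goldstine net may depend on $y^*$), so the possibility that $\|y^*\|$ is small over $\mathcal A$ causes no difficulty.
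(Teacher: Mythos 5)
Your proof is correct and follows essentially the same route as the paper: the inclusion $(\{x^*\}^\lambda\cap X)^\lambda\subseteq(\{x^*\}^\lambda)^\lambda\cap X^*$ via monotonicity of $B\mapsto B^\lambda$, and the converse via Goldstine's theorem applied to a witness $x^{**}\in\{x^*\}^\lambda$ for each $y^*\in\mathcal A$, absorbing the loss of parameter into the intersection over $\lambda\in(0,1)$. The only (cosmetic) differences are that you use the net form of Goldstine where the paper uses a two-functional $\varepsilon$-approximation, and you are slightly more careful about the supremum in the definition of $B^\lambda$ not being attained.
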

\begin{proof}
    Fix $A\in\bigcap_{\lambda\in(0,1)}\bigcup_{x^*\in X^*}\mathcal P((\{x^*\}^\lambda)^\lambda\cap X^*)$. For every $\varepsilon>0$ there is $y^{*}\in S_{X^{*}}$ such that $A\subset(\{y^{*}\}^{(1-\varepsilon/2)})^{(1-\varepsilon/2)}\cap X^*$. Namely, for every $x^*\in A$ we can find $x^{**}\in S_{X^{**}}$ such that
  $$x^{**}(x^*)\ge1-\varepsilon/2\text{ and }x^{**}(y^*)\ge1-\varepsilon/2.$$
  By Goldstine's theorem, there is $x\in S_X$ such that
  $$|(x^{**}-x)(x^*)|<\varepsilon/2\text{ and }|(x^{**}-x)(y^*)|<\varepsilon/2,$$
  therefore $x^*(x),y^*(x)>1-\varepsilon.$ In other words $A\subset(\{y^{*}\}^{(1-\varepsilon)}\cap X)^{(1-\varepsilon)}$. We conclude that
  $$\bigcap_{\lambda\in(0,1)}\bigcup_{x^*\in X^*}\mathcal P((\{x^*\}^\lambda)^\lambda\cap X^*)\subset\bigcap_{\lambda\in(0,1)}\bigcup_{x^*\in X^*}\mathcal P((\{x^*\}^\lambda\cap X)^\lambda).$$
  The opposite inclusion is obvious since $(\{x^*\}^\lambda\cap X)^\lambda\subset (\{x^*\}^\lambda)^\lambda\cap X^*$ for every $x^*\in X^*$ and $\lambda\in(0,1)$.
\end{proof}

\begin{theorem}\label{thm: sd2p iff oh}
    Let $X$ be a Banach space and $\kappa$ an infinite cardinal. The following statements hold:
    \begin{itemize}
        \item[(a)] $X$ has the SD2P$_{<\kappa}$ if and only if $X^*$ is $(<\kappa)$-octahedral;
        \item[(b)] $X^*$ has the weak$^*$ SD2P$_{<\kappa}$ if and only if for every $A\in\mathcal P_{<\kappa}(S_X)$ and $\varepsilon>0$ there is $x^{**}\in S_{X^{**}}$ such that $\|x+x^{**}\|\ge2-\varepsilon\text{ for every }x\in A$.
    \end{itemize}
\end{theorem}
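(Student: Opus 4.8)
The plan is to obtain both equivalences by unwinding Definition~\ref{def: kappa-SD2P} into an inclusion of set systems and then feeding that inclusion into Lemma~\ref{lem: X SD2P iff X* OH} and Proposition~\ref{prop: characterization kappa OH and fail -1 BCP} (resp.\ Lemma~\ref{lem: properties of notation}), read one duality level higher: for~(a) that level is $X^{\ast}$, for~(b) it is $X^{\ast\ast}$.

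For (a), Definition~\ref{def: kappa-SD2P} applied to the couple $(X,X^{\ast})$ says that ``$X$ has the SD2P$_{<\kappa}$'' means
$$\mathcal P_{<\kappa}(S_{X^{\ast}})\subset\bigcap_{\lambda\in(0,1)}\bigcup_{x^{\ast}\in X^{\ast}}\mathcal P\big((\{x^{\ast}\}^{\lambda}\cap X)^{\lambda}\big).$$
By Lemma~\ref{lem: X SD2P iff X* OH} the right-hand collection equals $\bigcap_{\lambda}\bigcup_{x^{\ast}}\mathcal P\big((\{x^{\ast}\}^{\lambda})^{\lambda}\cap X^{\ast}\big)$; and since every member of $\mathcal P_{<\kappa}(S_{X^{\ast}})$ is contained in $X^{\ast}$, the intersection with $X^{\ast}$ may be dropped, so the condition becomes $\mathcal P_{<\kappa}(S_{X^{\ast}})\subset\bigcap_{\lambda}\bigcup_{x^{\ast}\in X^{\ast}}\mathcal P\big((\{x^{\ast}\}^{\lambda})^{\lambda}\big)$. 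This is precisely statement~(a') of Proposition~\ref{prop: characterization kappa OH and fail -1 BCP} read in the space $X^{\ast}$, which by that proposition is equivalent to $X^{\ast}$ being $(<\kappa)$-octahedral. This settles~(a).

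For (b), Definition~\ref{def: kappa-SD2P} applied to the couple $(X^{\ast},X)$ says that ``$X^{\ast}$ has the weak$^{\ast}$ SD2P$_{<\kappa}$'' means $\mathcal P_{<\kappa}(S_{X})\subset\bigcap_{\lambda}\bigcup_{x^{\ast\ast}\in X^{\ast\ast}}\mathcal P\big((\{x^{\ast\ast}\}^{\lambda}\cap X^{\ast})^{\lambda}\big)$. Applying Lemma~\ref{lem: X SD2P iff X* OH} to the Banach space $X^{\ast}$ (so that $X^{\ast\ast}$ and $X^{\ast}$ now play the roles of ``$X^{\ast}$'' and ``$X$''), and then discarding the intersection with $X^{\ast\ast}$ (legitimate because each $A\in\mathcal P_{<\kappa}(S_X)$ lies in $X\subseteq X^{\ast\ast}$), this turns into $\mathcal P_{<\kappa}(S_{X})\subset\bigcap_{\lambda}\bigcup_{x^{\ast\ast}\in X^{\ast\ast}}\mathcal P\big((\{x^{\ast\ast}\}^{\lambda})^{\lambda}\big)$. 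Finally I would verify, for a fixed $A\subset S_{X}$ (viewed inside $S_{X^{\ast\ast}}$), that the condition ``for every $\lambda\in(0,1)$ there is $x^{\ast\ast}\in X^{\ast\ast}$ with $A\subset(\{x^{\ast\ast}\}^{\lambda})^{\lambda}$'' is equivalent to ``for every $\varepsilon>0$ there is $x^{\ast\ast}\in S_{X^{\ast\ast}}$ with $\|x+x^{\ast\ast}\|\ge 2-\varepsilon$ for every $x\in A$''. For the forward direction one normalises $x^{\ast\ast}$ using Remark~\ref{rem: properties of notation}(a) and applies the implication $x\in(\{x^{\ast\ast}\}^{\mu})^{\mu}\Rightarrow\|x+x^{\ast\ast}\|\ge 2\mu$ from Lemma~\ref{lem: properties of notation} with $\mu=1-\varepsilon/2$; for the reverse one applies the last part of Lemma~\ref{lem: properties of notation} with $n=2$ inside $X^{\ast\ast}$ and $\varepsilon=1-\lambda$ (alternatively, this equivalence is exactly (a)$\iff$(a') of Proposition~\ref{prop: characterization kappa OH and fail -1 BCP} read in $X^{\ast\ast}$, whose proof is performed one set $A$ at a time). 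Quantifying over all $A\in\mathcal P_{<\kappa}(S_X)$ yields~(b).

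The one step requiring genuine care — and the main obstacle — is the application of Lemma~\ref{lem: X SD2P iff X* OH} to $X^{\ast}$ in~(b): the Goldstine argument inside that lemma is now run in $X^{\ast}$, so it uses weak$^{\ast}$-density of $B_{X^{\ast}}$ in $B_{X^{\ast\ast\ast}}$, and one must keep careful track of which of the canonical embeddings $X\hookrightarrow X^{\ast\ast}$ and $X^{\ast}\hookrightarrow X^{\ast\ast\ast}$ is being used in each duality pairing. The remaining manipulations — with the operation $B\mapsto B^{\lambda}$ and with the parameters $\lambda$ and $\varepsilon$ — are routine.
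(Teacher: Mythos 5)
Your proposal is correct and follows exactly the route the paper takes: its proof of this theorem is the one-line combination of Lemma~\ref{lem: X SD2P iff X* OH} with Proposition~\ref{prop: characterization kappa OH and fail -1 BCP}, applied at the level of $X^{*}$ for (a) and of $X^{**}$ for (b), which is precisely what you unwind. Your attention to the fact that the equivalence (a)$\iff$(a') of Proposition~\ref{prop: characterization kappa OH and fail -1 BCP} is proved one set $A$ at a time (so it can be restricted to $\mathcal P_{<\kappa}(S_X)\subset\mathcal P_{<\kappa}(S_{X^{**}})$ in part (b)) is exactly the detail the paper leaves implicit.
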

\begin{proof}
    The conclusion follows from Proposition~\ref{prop: characterization kappa OH and fail -1 BCP} combined with Lemma~\ref{lem: X SD2P iff X* OH}.
\end{proof}

\begin{remark}\label{rem: classical duality}
Note that by taking $\kappa=\omega$ in Theorem~\ref{thm: sd2p iff oh}  (and also applying the Principle of Local Refelexivity in (b)) we recover the known classical dualities between the SD2P and octahedral norms. 
\end{remark}

We now turn our attention to the 1-ASD2P$_{<\kappa}$ and its dual connections to the failure of the $(-1)$-BCP$_{<\kappa}$.

\begin{proposition}\label{prop: X sep OH and X* CSP}
Let $X$ be a Banach space and $\kappa$ an infinite cardinal. Consider the following statements:
\begin{itemize}
    \item[(a)] $X$ fails the $(-1)$-BCP$_{<\kappa}$;
    \item[(b)] $X^{\ast}$ has the weak$^\ast$ 1-ASD2P$_{<\kappa}$;
    \item[(c)] For every $A \in\mathcal P_{<\kappa}(S_X)$ there is $x^{**} \in S_X^{**}$
    such that $\n{x + x^{**}}=2$ for every $x \in A$;
    \item[(d)] $X^*$ has the weak$^*$ SD2P$_{<\kappa}$.
\end{itemize}
Then (a)$\Rightarrow$(b)$\Rightarrow$(c)$\Rightarrow$(d).
\end{proposition}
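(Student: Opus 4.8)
The plan is to prove the three implications in turn, exploiting the characterizations already established: Proposition~\ref{prop: characterization kappa OH and fail -1 BCP} translates the failure of the $(-1)$-BCP$_{<\kappa}$ into the set-theoretic inclusion $\mathcal P_{<\kappa}(S_X)\subset\bigcup_{x\in X}\mathcal P((\{x\}^1)^1)$, Definition~\ref{def: kappa-CS} spells out the weak$^*$ 1-ASD2P$_{<\kappa}$ of $X^*$ as $\mathcal P_{<\kappa}(S_X)\subset\bigcup_{x^{**}\in X^{**}}\mathcal P((\{x^{**}\}^1\cap X^*)^1)$, and Theorem~\ref{thm: sd2p iff oh}(b) already identifies the weak$^*$ SD2P$_{<\kappa}$ of $X^*$ with the approximate version of (c). So (c)$\Rightarrow$(d) is essentially immediate: (c) is the exact ($\varepsilon=0$) version of the condition in Theorem~\ref{thm: sd2p iff oh}(b), so it trivially implies the approximate one, which is equivalent to (d).

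For (a)$\Rightarrow$(b): given $A\in\mathcal P_{<\kappa}(S_X)$, (a) combined with Proposition~\ref{prop: characterization kappa OH and fail -1 BCP}(b')$\Rightarrow$(b) furnishes $y\in S_X$ with $\|x+y\|=2$ for every $x\in A$; equivalently, by Lemma~\ref{lem: properties of notation} (the $\varepsilon=0$ case), there is $x^*\in S_{X^*}$ with $x^*(x)=1=x^*(y)$ for all $x\in A$. I now want to produce, for the weak$^*$ 1-ASD2P$_{<\kappa}$ of $X^*$, a set $B\subset S_{X^*}$ norming $A$ and an $x^{**}\in S_{X^{**}}$ with $x^{**}(x^*)=1$ for every $x^*\in B$. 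The natural choice is to take $x^{**}$ to be any Hahn--Banach/weak$^*$-cluster extension of the evaluation at $y$ — more precisely pick $x^{**}\in S_{X^{**}}$ in the weak$^*$-closure of $\{z\in S_X:\ x'(z)>1-\delta\ \text{for the relevant }x'\}$, i.e. an element with $x^{**}(x^*)=1$ witnessing that the elements of $A$ all lie in $(\{x^{**}\}^1\cap X^*)^1$. Concretely, for each $x\in A$ and each $n$, the slice $\{z\in B_X: x^*(z)>1-1/n,\ x(z)>1-1/n\}$ (using that $x^*(x)=1$) is nonempty; a weak$^*$-cluster point $x^{**}$ of a net chosen from these slices satisfies $x^{**}(x^*)=1$ and $x^{**}(x)=1$ for the finitely-many (or fewer, after a diagonalization over the $<\kappa$-many) constraints — but since $\kappa$ may be uncountable one must instead argue directly: set $B:=\{x^*\}$ (a single functional!), which norms $A$ because $x^*(x)=1=\|x^*\|\|x\|$ for every $x\in A$, and then one only needs a single $x^{**}\in S_{X^{**}}$ with $x^{**}(x^*)=1$, which exists by Goldstine (or just Hahn--Banach on $X^*$). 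Thus $A\subset(\{x^{**}\}^1\cap X^*)^1$ with the single-point $B=\{x^*\}$ doing the norming, giving (b).

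For (b)$\Rightarrow$(c): given $A\in\mathcal P_{<\kappa}(S_X)$, apply (b) to get $B\subset S_{X^*}$ norming $A$ and $x^{**}\in S_{X^{**}}$ with $x^{**}(x^*)=1$ for all $x^*\in B$. For each $x\in A$ pick $x^*_x\in B$ with $x^*_x(x)=\|x\|=1$ (here "norms" means $\lambda$-norms for all $\lambda<1$, and with $x\in S_X$ and the supremum in the definition of $B^\lambda$ being over a set that, upon taking $f(x)\cdot x$ normalizations, forces $x^*_x(x)=1$ exactly — alternatively one takes a sequence $x^*_{x,n}$ with $x^*_{x,n}(x)\to1$ and passes to the relation in the limit). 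Then by Lemma~\ref{lem: properties of notation} (first implication, $\varepsilon=0$ version), $x\in(\{x^{**}\}^1)^1$ inside $X^{**}$, and the second implication there gives $\|x+x^{**}\|\ge 1\cdot\|x\|+1\cdot\|x^{**}\|=2$, hence $=2$. The main obstacle I anticipate is the careful handling of the phrase "$B$ norms $A$" versus "$B$ $\lambda$-norms $A$": since norming is the intersection over all $\lambda<1$, one does not get a single functional attaining the norm at $x$ for free, so in the step of (b)$\Rightarrow$(c) I will either need a compactness/weak$^*$-limit argument to produce an exact norm-attaining functional, or — cleaner — I will extract, for fixed $x$, a net $x^*_\alpha\in B$ with $x^*_\alpha(x)\to1$, note $x^{**}(x^*_\alpha)=1$ throughout, and pass to a weak$^*$-cluster point $x^*_\infty$ of $\{x^*_\alpha\}$ in $B_{X^*}$ to get $x^*_\infty(x)=1$ and $x^{**}(x^*_\infty)=1$ (the latter survives since it is a closed condition, being the value of a fixed functional $x^{**}$), and then invoke Lemma~\ref{lem: properties of notation} with $x^*_\infty$. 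In (a)$\Rightarrow$(b) the same subtlety is what makes the single-point choice $B=\{x^*\}$ so convenient, and I will emphasize that reduction.
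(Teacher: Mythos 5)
Your (c)$\Rightarrow$(d) matches the paper, and the core of your (b)$\Rightarrow$(c) --- establishing $x\in(\{x^{**}\}^1)^1$ and invoking Lemma~\ref{lem: properties of notation} --- is sound, because $(\cdot)^1$ is defined by a supremum and you never need an exactly norming functional: $B$ norming $A$ together with $B\subset\{x^{**}\}^1$ already gives $\sup_{x^*\in B}x^*(x)/(\|x^*\|\|x\|)\ge\lambda$ for every $\lambda<1$, hence $x\in(\{x^{**}\}^1)^1$. However, your proposed ``cleaner'' repair via a weak$^*$-cluster point $x^*_\infty$ of a net in $B$ is wrong as stated: the condition $x^{**}(x^*)=1$ is \emph{not} preserved under weak$^*$ limits in $X^*$, since a general $x^{**}\in X^{**}$ is $\sigma(X^*,X)$-continuous only when $x^{**}\in X$. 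Drop that detour; the paper simply estimates $\|x+x^{**}\|\ge\sup_{x^*\in B}\left(x^*(x)+x^{**}(x^*)\right)=1+1=2$.

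The genuine gap is in (a)$\Rightarrow$(b). From $\|x+y\|=2$ for every $x\in A$, Lemma~\ref{lem: properties of notation} (applied to each pair $(x,y)$ with $n=2$, $\e=0$) yields for each $x\in A$ a functional $x^*_x$ with $x^*_x(x)=x^*_x(y)=1$, and this functional \emph{depends on} $x$. Your assertion that a single $x^*\in S_{X^*}$ satisfies $x^*(x)=1$ for all $x\in A$ is false: take $X=\ell_1(\omega_1)$, $A=\{e_1,-e_1\}$, $y=e_2$; then $\|\pm e_1+e_2\|=2$, but no functional can norm both $e_1$ and $-e_1$. Hence your choice $B=\{x^*\}$ does not norm $A$ and the argument collapses. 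The correct construction reverses the roles: by Proposition~\ref{prop: characterization kappa OH and fail -1 BCP}, (a) is equivalent to the statement that for every $A\in\mathcal P_{<\kappa}(S_X)$ there is $y\in S_X$ with $A\subset(\{y\}^1)^1$. Take $B:=\{y\}^1\cap S_{X^*}$ (the set of \emph{all} norm-one functionals attaining their norm at $y$ --- in general many of them) and let $x^{**}$ be the canonical image of $y$ in $S_{X^{**}}$. Then $x^{**}(x^*)=x^*(y)=1$ for every $x^*\in B$, and $B$ norms $A$ precisely because $A\subset(\{y\}^1)^1=B^1$ by Remark~\ref{rem: properties of notation}(a). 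This is what the paper's one-line appeal to Proposition~\ref{prop: characterization kappa OH and fail -1 BCP} encodes.
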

\begin{proof}
(a)$\Rightarrow$(b) follows from Proposition~\ref{prop: characterization kappa OH and fail -1 BCP}. (b)$\Rightarrow$(c). Fix $A \in\mathcal P_{<\kappa}(S_X)$. Find $B\subset S_{X^*}$ and $x^{**}\in S_{X^{**}}$ such that $B$ norms $A$ and $x^{**}(x^*)=1$ for every $x^*\in B$. Then for every $x\in A$
$$\|x+x^{**}\|\ge\sup_{x^*\in B}x^*(x)+x^{**}(x^*)=2.$$
(c)$\Rightarrow$(d) follows from Theorem~\ref{thm: sd2p iff oh}(b).
\end{proof}

\begin{remark}
    Notice that the converse of (a)$\Rightarrow$(b) in Proposition~\ref{prop: X sep OH and X* CSP} does not hold in general since we will show that $\ell_1(\kappa)^*$ has the weak$^\ast$ 1-ASD2P$_\kappa$ (Proposition~\ref{prop: ell_infty has w*-CS}), but $\ell_1(\kappa)$ clearly has the $(-1)$-BCP$_\kappa$. A partial positive vice-versa to (c)$\Rightarrow$(d) will be given in Theorem~\ref{thm: w* SD2P=w* w SD2P}.
\end{remark}

\begin{proposition}\label{prop: X CSP and X sep OH}
Let $X$ be a normed space and $\kappa$ an infinite cardinal. If $X$ has the 1-ASD2P$_{<\kappa}$, then $X^\ast$ fails the $(-1)$-BCP$_{<\kappa}$.
\end{proposition}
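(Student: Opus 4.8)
The plan is to deduce this from the characterizations already established, passing from $X$ to its bidual via Goldstine's theorem. Recall that by Definition~\ref{def: kappa-CS}, $X$ having the 1-ASD2P$_{<\kappa}$ means $\mathcal P_{<\kappa}(S_{X^*})\subset\bigcup_{x^*\in X^*}\mathcal P((\{x^*\}^1\cap X)^1)$, and that, by Proposition~\ref{prop: characterization kappa OH and fail -1 BCP} applied to the space $X^*$, the space $X^*$ failing the $(-1)$-BCP$_{<\kappa}$ is equivalent to statement (b') there, namely $\mathcal P_{<\kappa}(S_{X^*})\subset\bigcup_{x^{**}\in X^{**}}\mathcal P((\{x^{**}\}^1)^1)$ (with the ambient space being $X^*$, so $(\{x^{**}\}^1)^1\subset X^*$). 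Hence it suffices to show that for every $A\in\mathcal P_{<\kappa}(S_{X^*})$, if there are $B\subset S_X$ and $x^*\in S_{X^*}$ with $B$ norming $A$ and $x^*(x)=1$ for all $x\in B$, then there is $x^{**}\in S_{X^{**}}$ with $A\subset(\{x^{**}\}^1)^1$, i.e. with $\|x^*+x^{**}\|=2$ for every $x^*\in A$ (using Lemma~\ref{lem: properties of notation} in the direction that $\|x^*+x^{**}\|=2$ gives $x^*\in(\{x^{**}\}^1)^1$, and the reverse reading of the first implication there).

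The key step is to produce the bidual element $x^{**}$. Given the norming set $B\subset S_X$ attached to $A$ as above, I would look at $B$ inside $X^{**}=\overline{B_X}^{\,w^*}$ and take a weak$^*$-cluster point $x^{**}\in B_{X^{**}}$ of $B$ — or, if one wants $x^{**}$ on the sphere with no extra argument, simply take $x^{**}$ to be any weak$^*$-limit of a subnet of $B$ (all elements of $B$ have norm $1$, but a weak$^*$-cluster point of the unit sphere need only lie in the ball). To see that $\|x^{**}\|=1$: since $x^*\in S_{X^*}$ satisfies $x^*(x)=1$ for all $x\in B$, passing to the weak$^*$-limit gives $x^{**}(x^*)=1$, so $\|x^{**}\|\ge1$, hence $\|x^{**}\|=1$. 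Now fix $y^*\in A$. Since $B$ norms $A$, for every $\lambda\in(0,1)$ there is $x_\lambda\in B$ with $y^*(x_\lambda)\ge\lambda$ (using $\|y^*\|=1$); the net of such points still clusters at some weak$^*$-limit which is a cluster point of $B$, but to keep a single $x^{**}$ I would instead argue directly: for the fixed $x^{**}$ chosen above and any $\varepsilon>0$, the weak$^*$-neighborhood $\{z^{**}:|z^{**}(y^*)-x^{**}(y^*)|<\varepsilon,\ |z^{**}(x^*)-1|<\varepsilon\}$ must meet $B$, so there is $x\in B$ with $y^*(x)>x^{**}(y^*)-\varepsilon$ — this does not immediately give $y^*(x)$ close to $1$. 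So the cleaner route is: for each $y^*\in A$ and each $n$, pick $x_{y^*,n}\in B$ with $y^*(x_{y^*,n})\ge1-1/n$; then take $x^{**}$ a weak$^*$-cluster point of the whole (still small, cardinality $<\kappa$ times $\omega$, but we only need one cluster point) net indexed so that it is cofinal through all these choices. Concretely, index by pairs $(F,n)$ with $F\in\mathcal P_{<\omega}(A)$ ordered by inclusion and $n\in\mathbb N$, and for each such pair choose $x_{F,n}\in B$ with $y^*(x_{F,n})\ge1-1/n$ for all $y^*\in F$ (possible since $B$ norms each element of $F$ and $F$ is finite — here one uses that a single $x$ can serve a finite set only if the relevant intersections are nonempty, which is NOT guaranteed, so in fact one must be more careful).

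The main obstacle — and the point that needs real care — is exactly this: $B$ norming $A$ gives, for each \emph{individual} $y^*\in A$, elements of $B$ on which $y^*$ is nearly norming, but not a single net along which \emph{all} $y^*\in A$ are simultaneously nearly normed. The honest fix is to pass instead to the \emph{directed} net indexed by $\mathcal P_{<\omega}(A)\times\mathbb N$ and for each $(F,n)$ and each $y^*\in F$ choose $x_{y^*,F,n}\in B$ with $y^*(x_{y^*,F,n})\ge1-1/n$ and $x^*(x_{y^*,F,n})=1$; collect these finitely many points and take a weak$^*$-cluster point $x^{**}_F$ of the resulting finite family as $(F,n)$ varies — but a finite family need not cluster usefully. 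The correct and clean argument avoids nets altogether: for a \emph{single} fixed $y^*\in A$, the set $\{x\in B: x^*(x)=1\}$ with $\sup_{x} y^*(x)=\|y^*\|=1$ produces a sequence $x_k\in B$, $x^*(x_k)=1$, $y^*(x_k)\to1$; let $x^{**}_{y^*}$ be a weak$^*$-cluster point of $(x_k)$, so $x^{**}_{y^*}(x^*)=1$ and $x^{**}_{y^*}(y^*)=1$, giving $\|y^*+x^{**}_{y^*}\|\ge y^*(x^{**}_{y^*})+x^*(x^{**}_{y^*})=2$ and $\|x^{**}_{y^*}\|=1$. This shows $y^*\in(\{x^*\}^1)^1$ \emph{for that $y^*$}, with a cluster point depending on $y^*$ — but (b') of Proposition~\ref{prop: characterization kappa OH and fail -1 BCP} requires one $x^{**}$ working for \emph{all} $y^*\in A$ at once. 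To get that, take $x^{**}$ to be a weak$^*$-cluster point of $B$ in $B_{X^{**}}$ along the net $\mathcal D:=\mathcal P_{<\omega}(A)\times\mathbb N$ where at stage $(F,n)$ one is allowed to choose \emph{any} $x\in B$ with $x^*(x)=1$ and with $y^*(x)\ge1-1/n$ for all $y^*\in F$; the existence of such $x$ is guaranteed not by a single $B$-point but by replacing $B$ with its \emph{weak$^*$-closed convex hull} reasoning — and this is precisely where Lemma~\ref{lem: characterization CS}/Proposition~\ref{prop: norm attaining on norm additive} must be re-invoked: from $B$ $1$-norming $A$ and $x^*(x)=1$ on $B$, one shows that for every finite $F\subset A$ the convex combination arguments force the existence of $x\in B_X$ with $x^*(x)=1$ and $y^*(x)$ arbitrarily close to $1$ for all $y^*\in F$ simultaneously. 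Once that simultaneous approximation is in hand, a single weak$^*$-cluster point $x^{**}$ of this net lies in $S_{X^{**}}$, satisfies $x^{**}(x^*)=1$ and $x^{**}(y^*)=1$ for all $y^*\in A$, hence $\|y^*+x^{**}\|=2$ for all $y^*\in A$, which is statement (b') of Proposition~\ref{prop: characterization kappa OH and fail -1 BCP} for $X^*$ — equivalently, $X^*$ fails the $(-1)$-BCP$_{<\kappa}$. I expect the simultaneous-approximation lemma (that $1$-norming by $B$ with $x^*\equiv1$ on $B$ upgrades to \emph{joint} near-norming on finite subsets) to be the technical heart, and it should follow by the same Hahn--Banach/convexity mechanism as in Proposition~\ref{prop: norm attaining on norm additive}.
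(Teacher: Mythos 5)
Your proposal goes wrong at the very first step, in identifying what has to be proved. When Proposition~\ref{prop: characterization kappa OH and fail -1 BCP} is applied to the space $X^*$, condition (b) asks for a witness $y^*\in S_{X^*}$ --- an element of $X^*$ \emph{itself} --- such that $\|z^*+y^*\|=2$ for every $z^*\in A$, and accordingly condition (b') is a union over $x^*\in X^*$, not over $x^{**}\in X^{**}$ as you wrote. The 1-ASD2P$_{<\kappa}$ already hands you this witness: it is the functional $x^*\in S_{X^*}$ satisfying $x^*(x)=1$ for all $x\in B$. Indeed, for each $y^*\in A$, since $B$ norms $\{y^*\}$ one has $\sup_{x\in B}y^*(x)=1$, whence
$\|y^*+x^*\|\geq\sup_{x\in B}\bigl(y^*(x)+x^*(x)\bigr)=1+\sup_{x\in B}y^*(x)=2$.
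Equivalently, in the set-theoretic language, $(\{x^*\}^1\cap X)^1\subset(\{x^*\}^1)^1$ by Remark~\ref{rem: properties of notation}(b) (under the canonical embedding of $X$ into $X^{**}$), so the defining inclusion of the 1-ASD2P$_{<\kappa}$ is literally condition (b') for $X^*$. That is the entirety of the paper's proof; no Goldstine, nets, or weak$^*$-cluster points are needed, and crucially the element of $B$ certifying $\|y^*+x^*\|=2$ is allowed to depend on $y^*$.

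The ``simultaneous-approximation lemma'' that you identify as the technical heart --- a single $x\in B_X$ with $x^*(x)=1$ and $y^*(x)\geq1-1/n$ for all $y^*$ in a finite $F\subset A$, leading to one $x^{**}\in S_{X^{**}}$ with $x^{**}(y^*)=1$ for all $y^*\in A$ --- is not only unnecessary but false in general: take $A=\{y^*,-y^*\}$, which can perfectly well be normed by a set $B$ on which some $x^*$ is identically $1$ (e.g.\ in $C[0,1]$ with $x^*=\delta_{1/2}$), yet no $x^{**}$ satisfies $x^{**}(y^*)=x^{**}(-y^*)=1$. In fact, partway through your argument you correctly establish, for each fixed $y^*\in A$, that $y^*\in(\{x^*\}^1)^1$ with the \emph{same} $x^*$, and then talk yourself out of it by demanding a single norming bidual element for all of $A$. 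Condition (b') only requires that $A$ be contained in the one set $(\{x^*\}^1)^1$; membership of different elements of $A$ may be certified by different elements of $\{x^*\}^1$.
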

\begin{proof}
It follows from Proposition~\ref{prop: characterization kappa OH and fail -1 BCP}.
\end{proof}

However, we do not know whether the converse of Proposition~\ref{prop: X CSP and X sep OH} holds.

\begin{ques}
    Let $X$ be a Banach space and $\kappa$ an infinite cardinal. If $X^*$ fails the $(-1)$-BCP$_{<\kappa}$, then does $X$ have the 1-ASD2P$_{<\kappa}$?
\end{ques}

\begin{remark}\label{rem: extend cs to all cardinals}
    Let $X$ be a normed space and $Y\subset X^*$. Notice that a set $A\subset S_X$ that norms a dense set in $S_Y$ also norms $S_Y$ itself. Therefore we conclude that if $(X,Y)$ has the 1-ASD2P$_{\text{dens}(Y)}$, then $(X,Y)$ has the 1-ASD2P with respect to the maximum meaningful cardinality, that is $(X,Y)$ has the 1-ASD2P$_{|Y|}$.
\end{remark}

Now we focus on some results that hold when $\kappa=\omega$.

\begin{proposition}\label{prop: dual of Daugavet has w*-CS_infty}
Let $X$ be a Banach space. If $X$ has the Daugavet property, then $X^*$ has the weak$^*$ 1-ASD2P$_\omega$. If in addition $X$ is separable, then $X^*$ has the weak$^*$ 1-ASD2P$_{|X|}$.
\end{proposition}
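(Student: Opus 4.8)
The plan is to reduce everything, via Proposition~\ref{prop: norm attaining on norm additive} and Definition~\ref{def: kappa-CS}, to constructing a single ``norm‑additive'' family of functionals meeting a prescribed countable family of weak$^*$‑slices of $B_{X^*}$. The ``in addition'' clause will then come for free: if $X$ is separable then $\operatorname{dens}(X)=\omega$, so the weak$^*$ 1-ASD2P$_\omega$ of $X^*$ is exactly the 1-ASD2P$_{\operatorname{dens}(X)}$ of the couple $(X^*,X)$, which Remark~\ref{rem: extend cs to all cardinals} upgrades to the 1-ASD2P$_{|X|}$. So I would only prove that $(X^*,X)$ has the 1-ASD2P$_\omega$. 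By Definition~\ref{def: kappa-CS} it suffices, for a countable $A=\{u_n:n\in\mathbb N\}\subseteq S_X$, to produce $B\subseteq S_{X^*}$ that norms $A$ together with $\varphi\in S_{X^{**}}$ with $\varphi(y^*)=1$ for all $y^*\in B$; since a subset of $S_{X^*}$ meeting every $T_{n,k}:=\{y^*\in B_{X^*}:y^*(u_n)>1-\tfrac1k\}$ automatically norms $A$, it is enough to treat the countable family $\mathfrak S=\{T_{n,k}:n,k\in\mathbb N\}$, or indeed any sequence $(T_j)_j$ of (open) weak$^*$‑slices of $B_{X^*}$.

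For such a sequence I would build inductively $y^*_j\in T_j\cap S_{X^*}$ so that $\{y^*_1,\dots,y^*_j\}$ is \emph{norm‑additive} (that is, $\|\sum_{i\in F}y^*_i\|=|F|$ for every $F\subseteq\{1,\dots,j\}$) for every $j$; then $(y^*_j)_j$ itself is norm‑additive, and Proposition~\ref{prop: norm attaining on norm additive} supplies $\varphi\in S_{X^{**}}$ with $\varphi(y^*_j)=1$ for all $j$, so $B:=\{y^*_j\}$ works. Start with any $y^*_1\in T_1\cap S_{X^*}$. At the inductive step, Proposition~\ref{prop: norm attaining on norm additive} gives $\psi\in S_{X^{**}}$ with $\psi(y^*_i)=1$ for $i<j$; in particular $\|\sum_{i<j}y^*_i\|=j-1$, so $z^*:=\tfrac1{j-1}\sum_{i<j}y^*_i\in S_{X^*}$. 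The crux is to find $y^*_j\in T_j$ with $\|z^*+y^*_j\|=2$ \emph{exactly}. Granting this, choosing $\psi'\in S_{X^{**}}$ with $\psi'(z^*+y^*_j)=2$ forces $\psi'(z^*)=\psi'(y^*_j)=1$, hence $\psi'(y^*_i)=1$ for all $i\le j$ (as $\psi'(z^*)=1$ and each $\psi'(y^*_i)\le1$), so $\{y^*_1,\dots,y^*_j\}$ is norm‑additive by Proposition~\ref{prop: norm attaining on norm additive}; moreover $\|y^*_j\|\ge\|z^*+y^*_j\|-\|z^*\|=1$, so $y^*_j\in T_j\cap S_{X^*}$.

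The exact‑attainment claim — for $z^*\in S_{X^*}$ and a weak$^*$‑slice $T=\{y^*\in B_{X^*}:y^*(u)>1-\varepsilon\}$ there is $y^*\in T$ with $\|z^*+y^*\|=2$ — is the one nontrivial point, and the main obstacle: the Daugavet property only yields this with $>2-\varepsilon$, and such suprema need not be attained, so an extra idea is required. I would obtain it by Baire category. The weak$^*$‑closure $K:=\overline T^{\,w^*}=\{y^*\in B_{X^*}:y^*(u)\ge1-\varepsilon\}$ is weak$^*$‑compact, hence a Baire space; for each $n$ the set $U_n:=\{y^*\in K:\|z^*+y^*\|>2-\tfrac1n\}$ is relatively weak$^*$‑open in $K$ because the norm is weak$^*$‑lower semicontinuous, and it is weak$^*$‑dense in $K$: the Daugavet property makes $\{y^*\in B_{X^*}:\|z^*+y^*\|>2-\tfrac1n\}$ weak$^*$‑dense in $B_{X^*}$ (a standard equivalent form of the Daugavet property — it follows from its slice version by iterating over the weak$^*$ analogue of Bourgain's lemma), hence this set is dense in the relatively open subset $T$ of $B_{X^*}$, hence in $K$. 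By Baire's theorem $\bigcap_nU_n$ is dense in $K$, so it meets the nonempty relatively open set $T$; any $y^*\in\bigcap_nU_n\cap T$ has $\|z^*+y^*\|\ge2$, so $\|z^*+y^*\|=2$, and $y^*\in T$, as required. Once this passage from the approximate Daugavet inequality to the exact equality is in place, everything else is bookkeeping with Proposition~\ref{prop: norm attaining on norm additive} and Definition~\ref{def: kappa-CS}.
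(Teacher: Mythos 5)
Your argument is correct, but it is organized quite differently from the paper's, which for the first claim simply defers to the argument of \cite[Example~3.4]{abrahamsen_relatively_2018} and for the second invokes Remark~\ref{rem: extend cs to all cardinals} exactly as you do. The substantive difference lies in how the two arguments upgrade the approximate Daugavet inequality $\|z^*+y^*\|>2-\varepsilon$ to the exact equality $\|z^*+y^*\|=2$ needed for norm attainment: the cited argument runs a ``slice inside slice'' iteration and extracts the exact points from a nested sequence of weak$^*$-compact sets, whereas you fix the target slice $T$ once and apply Baire's theorem to the weak$^*$-compact set $K=\overline{T}^{\,w^*}$, using weak$^*$-lower semicontinuity of the norm to make each $U_n$ open and Shvydkoy's relatively weak$^*$-open form of the Daugavet property (which you correctly derive from the slice version via the weak$^*$ Bourgain lemma) to make each $U_n$ dense. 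A second organizational difference is that you build the norm-additive family $\{y_j^*\}$ one functional at a time, re-deriving a common norming functional $\psi'$ at each stage from the single exact equality $\|z^*+y_j^*\|=2$ via the averaging trick $\psi'(z^*)=\frac1{j-1}\sum_{i<j}\psi'(y_i^*)=1$; this is a clean way to exploit Proposition~\ref{prop: norm attaining on norm additive} in both directions and makes the whole proof self-contained, at the cost of being somewhat longer than the reference's simultaneous treatment of all slices. All the individual steps check out (in particular $T\cap S_{X^*}\neq\emptyset$ by normalizing, $K$ is Baire as a weak$^*$-compact Hausdorff space, and $D\cap T$ dense in $T$ dense in $K$ gives density of $U_n$ in $K$), so I see no gap.
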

\begin{proof}
    The argument in \cite[Example~3.4]{abrahamsen_relatively_2018} actually proves that $X^*$ has the weak$^*$ 1-ASD2P$_\omega$. Remark~\ref{rem: extend cs to all cardinals} provides the second part of the claim.
\end{proof}

We will show later on that the SD2P and the SD2P$_\omega$ are different properties (see Example~\ref{ex: c_0 fails infty CS}), nevertheless the same cannot be said for its weak$^*$ analogue.
 
 \begin{theorem}\label{thm: w* SD2P=w* w SD2P}
 Let $X$ be a Banach space. The following assertions are equivalent:
 \begin{itemize}
     \item[(i)] $X$ is octahedral;
     \item[(ii)] for every separable subspace $Y\subset X$ there exists $x^{\ast\ast}\in S_{X^{\ast\ast}}$ such that 
\begin{equation*}
  \| y + \lambda x^{\ast\ast} \| =\|y\|+|\lambda|
  \ \mbox{for every } y \in Y\mbox{ and }\lambda \in \mathbb{R};
\end{equation*}
     \item[(iii)] $X^\ast$ has the weak$^\ast$ SD2P;
     \item[(iv)] $X^\ast$ has the weak$^\ast$ SD2P$_\omega$.
 \end{itemize}
 \end{theorem}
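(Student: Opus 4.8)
The plan is to prove the chain of equivalences by going $(i)\Rightarrow(ii)\Rightarrow(iii)\Rightarrow(iv)\Rightarrow(i)$, exploiting that the ``$\omega$-versions'' of octahedrality and the weak$^\ast$ SD2P collapse to their classical (i.e.\ $<\omega$, finite) counterparts because every separable subspace of $X$, and in particular every countable subset of $S_X$, is handled by a single ``octahedral direction''. The implication $(iii)\Leftrightarrow(i)$ is the classical duality quoted just before Theorem~\ref{thm: sd2p iff oh} (it is also the case $\kappa=\omega$ of Theorem~\ref{thm: sd2p iff oh}(b) via Proposition~\ref{prop: characterization kappa OH and fail -1 BCP}, after invoking the Principle of Local Reflexivity), and $(iv)\Rightarrow(iii)$ is trivial since the SD2P$_\omega$ tests more sets than the SD2P$_{<\omega}=$ SD2P. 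So the only real content is to upgrade $(i)$ or $(iii)$ all the way to $(iv)$, i.e.\ to pass from ``finitely many functionals'' to ``countably many functionals'', and to produce the clean reflexive witness in $(ii)$.

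First I would prove $(i)\Rightarrow(ii)$. Fix a separable subspace $Y\subset X$ and a countable dense set $\{y_n\}_{n\ge1}$ in $S_Y$. Using octahedrality, build inductively a sequence $(x_k)_k\subset S_X$ and an increasing sequence of finite-dimensional subspaces so that $x_k$ is $(1-2^{-k})$-almost-orthogonal to $\spn\{y_1,\dots,y_k,x_1,\dots,x_{k-1}\}$ in the sense of condition~(i) in the definition of octahedral norms. Then consider $z_k:=\sum_{j\le k}2^{-j}x_j$; these are norm-close to the point $z:=\sum_{j\ge1}2^{-j}x_j$, and by construction $\|y+\lambda z_k\|$ is within $2^{-k}$-type error of $\|y\|+|\lambda|$ for all $y$ in the span of $y_1,\dots,y_k$. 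Passing to a weak$^\ast$-cluster point $x^{\ast\ast}\in S_{X^{\ast\ast}}$ of $(z_k)_k$ (or of a suitable renormalization to keep it on the sphere), one checks $\|y+\lambda x^{\ast\ast}\|=\|y\|+|\lambda|$ first for each $y_n$ and each $\lambda\in\mathbb R$, and then for all of $Y$ by density and continuity of the norm on $X^{\ast\ast}$. The implication $(ii)\Rightarrow(iii)$ is then a direct computation in the spirit of Proposition~\ref{prop: X sep OH and X* CSP}: given a countable $A=\{x_n^\ast\}\subset S_{X^\ast}$ — which spans a separable subspace $Y$ of $X$ after identifying $x_n^\ast$ with the evaluation... more precisely, given finitely many functionals $x_1^\ast,\dots,x_m^\ast\in S_{X^\ast}$ (this is all $(iii)$ needs), pick the $x^{\ast\ast}$ associated with $Y=\spn\{x_1^\ast,\dots,x_m^\ast\}^{\text{pre-annihilator-type span}}$; rather, simply invoke the classical equivalence $(i)\Leftrightarrow(iii)$ and use $(ii)$ only to get the norm-attaining witness needed in the weak$^\ast$ SD2P$_\omega$ step.

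The crux is $(iii)\Rightarrow(iv)$, equivalently $(i)\Rightarrow(iv)$: from octahedrality of $X$ we must verify the weak$^\ast$ SD2P$_\omega$ of $X^\ast$, which by Definition~\ref{def: kappa-SD2P} (applied to the couple $(X^\ast,X)$) asks that for every \emph{countable} $A\subset S_X$ and every $\lambda\in(0,1)$ there be $B\subset S_{X^\ast}$ $\lambda$-norming $A$ and $x^{\ast\ast}\in S_{X^{\ast\ast}}$ with $x^{\ast\ast}(x^\ast)\ge\lambda$ for all $x^\ast\in B$. This is exactly where $(ii)$ is used in its countable strength: take $Y:=\cspn(A)$, which is separable, get the witness $x^{\ast\ast}\in S_{X^{\ast\ast}}$ from $(ii)$, and for each $x\in A$ use the norm identity $\|x+x^{\ast\ast}\|=2$ together with a Hahn--Banach functional on $X^{\ast\ast}$ to produce, via Goldstine/Local Reflexivity as in Lemma~\ref{lem: X SD2P iff X* OH}, an $x^\ast\in S_{X^\ast}$ with $x^\ast(x)\ge\lambda$ and $x^{\ast\ast}(x^\ast)\ge\lambda$; collect these into $B$. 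Finally $(iv)\Rightarrow(iii)$ is immediate from $\mathcal P_{<\omega}(S_X)\subset\mathcal P_\omega(S_X)$ and Proposition~\ref{prop: characterization CS and SD2P} (or directly from the definitions), closing the loop.

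I expect the main obstacle to be the construction in $(i)\Rightarrow(ii)$: making the weak$^\ast$-cluster-point argument actually deliver the \emph{exact} equality $\|y+\lambda x^{\ast\ast}\|=\|y\|+|\lambda|$ (not merely $\ge(1-\varepsilon)(\cdots)$) and ensuring the limit stays on $S_{X^{\ast\ast}}$ rather than in the open ball. The standard fix is to choose the $x_k$'s with errors summing to something controllable, keep track of lower bounds $\|z_k\|\to1$, take the cluster point in the $w^\ast$-compact ball $B_{X^{\ast\ast}}$, get $\|y+\lambda x^{\ast\ast}\|\ge\|y\|+|\lambda|$ for free from lower semicontinuity applied to the constructed lower bounds, and get the reverse inequality from $\|x^{\ast\ast}\|\le1$ (which forces $\|x^{\ast\ast}\|=1$ by taking $y=0$, $|\lambda|=1$); the upper bound $\|y+\lambda x^{\ast\ast}\|\le\|y\|+|\lambda|$ is just the triangle inequality. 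Everything else is bookkeeping with density characters and the already-established dualities.
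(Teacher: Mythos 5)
Your overall architecture matches the paper's: the equivalence $(i)\Leftrightarrow(iii)$ is the classical duality, $(iv)\Rightarrow(iii)$ is trivial, and the content sits in producing the exact bidual witness of $(ii)$ and then converting it into the weak$^*$ SD2P$_\omega$ via Goldstine/local reflexivity (your $(ii)\Rightarrow(iv)$ step is essentially the paper's appeal to Theorem~\ref{thm: sd2p iff oh}(b) through Lemma~\ref{lem: X SD2P iff X* OH}, and it is fine). The genuine gap is in $(i)\Rightarrow(ii)$, exactly where you flag the ``main obstacle'', and your proposed fix does not work. Weak$^*$ lower semicontinuity of the bidual norm says that if $z_{k}\to x^{**}$ weak$^*$ then $\|y+\lambda x^{**}\|\leq\liminf_k\|y+\lambda z_k\|$; it yields an \emph{upper} bound on the norm of the limit, never the lower bound $\|y+\lambda x^{**}\|\geq\|y\|+|\lambda|$ you need. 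A lower bound in $X^{**}$ can only come from exhibiting a functional $f\in B_{X^*}$ with $f(y)\approx\|y\|$ and $x^{**}(f)\approx 1$ simultaneously, and your construction supplies no such single $f$: the norming functionals of $y+\lambda z_k$ vary with $k$, and passing to a cluster point of those functionals runs into a non-commuting double limit. There is also a secondary defect: the partial sums $z_k=\sum_{j\leq k}2^{-j}x_j$ converge in \emph{norm} to an element of $X$ itself, and the accumulated almost-orthogonality constants only give $\|y+\lambda z\|\geq \bigl(\prod_j(1-2^{-j})\bigr)(\|y\|+|\lambda|)$ with a product strictly less than $1$ that does not tend to $1$, so even after renormalization you retain a fixed multiplicative loss and cannot reach exact equality.

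The paper sidesteps all of this by citing two nontrivial external results: every separable subspace of an octahedral space embeds into a separable octahedral subspace $Z$ (Langemets, Proposition~3.36 of \cite{langemets_geometrical_2015}), and a separable octahedral space admits $z^{**}\in S_{Z^{**}}$ with $\|z+\lambda z^{**}\|=\|z\|+|\lambda|$ exactly (Godefroy--Kalton, \cite[Lemma~9.1]{godefroy_ball_1989}); the latter is precisely the hard step you are trying to reprove. If you want a self-contained argument, the correct mechanism is different from semicontinuity: build $(u_k)\subset S_X$ so that for every $n$, every $y\in E_n$ and every finitely supported $(\lambda_k)_{k\geq n}$ one has $\|y+\sum_{k\geq n}\lambda_k u_k\|\geq(1-\eps_n)\bigl(\|y\|+\sum_{k\geq n}|\lambda_k|\bigr)$, set $F:=w^*\text{-}\lim_{k\to\mathcal U}u_k$ for a fixed free ultrafilter $\mathcal U$, and obtain the lower bound $\|y+\lambda F\|\geq(1-\eps_n)(\|y\|+|\lambda|)$ for all $n$ from the second adjoint of the into-isomorphism $E_n\oplus_1\ell_1\to X$ (whose inverse has norm at most $(1-\eps_n)^{-1}$, a bound inherited by biadjoints), not from any limiting of norms. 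As written, your proof of $(i)\Rightarrow(ii)$ does not go through.
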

 \begin{proof}
    (i)$\implies$(ii). Let $Y$ be a separable subspace of $X$. By \cite[Proposition~3.36]{langemets_geometrical_2015}, there is a separable and octahedral subspace $Z$ of $X$ such that $Y\subset Z\subset X$. From \cite[Lemma~9.1]{godefroy_ball_1989} we can assure the existence of $z^{\ast\ast}\in S_{Z^{\ast\ast}}\subset S_{X^{\ast\ast}}$ such that 
 \begin{equation*}
  \| z + \lambda z^{\ast\ast} \| =\|z\|+|\lambda|
  \ \mbox{for every } z \in Z\mbox{ and }\lambda \in \mathbb{R}.
\end{equation*}
(ii)$\implies$(iv) follows from Theorem~\ref{thm: sd2p iff oh}(b) and Remark~\ref{rem: classical duality}, (iv)$\implies$(iii) is obvious and (iii)$\iff$(i) is \cite[Remark~II.5.2]{godefroy_metric_1989} or \cite[Theorem~2.1]{becerra_guerrero_octahedral_2014}.
 \end{proof}

\section{Direct sums}\label{sec: direct sums}

Recall that a norm $N$ on $\mathbb R^2$ is said to be \emph{absolute} if $N(a,b)=N(|a|,|b|)$ holds for every $(a,b)\in\mathbb R^2$ and \emph{normalized} if $N(1,0)=N(0,1)=1$ (see \cite{Bonsall1973}). For example, every $\ell_p$-norm, for $1\le p\le\infty$, is absolute and normalized. 

Given two normed spaces $X$ and $Y$, and an absolute norm $N$, we denote by $X\oplus_NY$ the product $X\times Y$ endowed with the norm $\|(x,y)\|_N:=N(\|x\|,\|y\|)$.

If $N$ is an absolute normalized norm on $\mathbb R^2$, then the \emph{dual norm} $N^*$ is defined by
$$N^*(c,d):=\max_{(a,b)\in B_{(\mathbb R^2,N)}}(|ac|+|bd|)$$
for every $(c,d)\in\mathbb R^2$. One can routinely verify that $N^*$ is an absolute normalized norm and $(X\oplus_NY)^*=X^*\oplus_{N^*}Y^*$. Lastly, we recall that an element $(a,b)\in\mathbb R^2$ is called \emph{positive} if $a,b\ge0$.

\begin{definition}\label{def: positive sd2p}(\cite{haller_stability_2017})
We say that an absolute normalized norm $N$ on $\mathbb R^2$ has the \emph{positive SD2P} if for every finite family of slices $S_1,\ldots,S_n$ in $B_{(\mathbb R^2,N)}$, defined by positive functionals in $(\mathbb R^2,N^*)$, there are positive $(a_i,b_i)\in S_i$ such that $N(\sum_{i=1}^{n}(a_i,b_i))=\sum_{i=1}^{n} N(a_i,b_i)$.
\end{definition}

The original definition of the positive SD2P was given in terms of convex combination of slices, but Lemma~\ref{lem: properties additive sets}(a) ensures that it is equivalent to Definition~\ref{def: positive sd2p}. It is known that an absolute normalized norm $N$ on $\mathbb R^2$ has the positive SD2P if and only if there is a positive $(a,b)$ such that
$$N(t(a,1)+(1-t)(b,1))=1\text{ for every }t\in[0,1],$$
hence it is clear that the $\ell_1$ and the $\ell_\infty$-norm both have the positive SD2P, but they are not the only such norms.

The positive SD2P was introduced in order to characterize all the absolute normalized norms that preserve the SD2P \cite[Theorem~3.5]{haller_stability_2017}. In addition, it was noted in \cite[Proposition~2.2]{haller_convex_2017} that the same statement holds also for the ASD2P. In the following we want to prove that both these statements can in fact be extended to infinite cardinals and dual pairs.

\begin{lemma}\label{lem: positive SD2P}
    Let $N$ be an absolute normalized norm on $\mathbb R^2$. The following are equivalent:
    \begin{itemize}
        \item[(i)] $N$ has the positive SD2P;
        \item[(ii)] For every family of slices $(S_\eta)_\eta$ in $B_{(\mathbb R^2,N)}$, defined by positive functionals in $(\mathbb R^2,N^*)$, there are positive $(a_\eta,b_\eta)\in S_\eta$ satisfying
        $$N(\sum_{i=1}^{n}(a_{\eta_i},b_{\eta_i}))=\sum_{i=1}^{n} N(a_{\eta_i},b_{\eta_i})\text{ for every }\eta_1,\ldots,\eta_n;$$
        \item[(iii)] For every set  $A\subset S_{(\mathbb R^2,N)^*}$ consisting of positive functionals there is a set $B\subset S_{(\mathbb R^2,N)}$ consisting of positive elements and $x^*\in S_{(\mathbb R^2,N^*)}$ such that $B$ norms $A$ and $x^*(x)=1$ for every $x\in B$.
    \end{itemize}
\end{lemma}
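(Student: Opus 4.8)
The plan is to establish the cycle $(i)\Rightarrow(iii)\Rightarrow(ii)\Rightarrow(i)$, where $(ii)\Rightarrow(i)$ is immediate (specialize the family $(S_\eta)_\eta$ to a finite one), and the substantive work is in the other two implications. The underlying principle is exactly the dichotomy made available by Proposition~\ref{prop: norm attaining on norm additive}: a set of vectors is norm-additive in the sense of $(ii)$ precisely when a single functional attains its norm simultaneously on all of them, which is the content of $(iii)$. So $(ii)$ and $(iii)$ should be essentially two phrasings of the same fact, modulo passing between arbitrary positive functionals and norm-one positive functionals (harmless by homogeneity and the observations in Remark~\ref{rem: properties of notation}) and the fact that in $\mathbb R^2$ the dual of $(\mathbb R^2,N)$ is $(\mathbb R^2,N^*)$ with $N^{**}=N$.

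For $(iii)\Rightarrow(ii)$: given a family $(S_\eta)_\eta$ of slices defined by positive functionals $x_\eta^*\in (\mathbb R^2,N^*)$, normalize to get $A:=\{x_\eta^*/N^*(x_\eta^*)\}_\eta\subset S_{(\mathbb R^2,N)^*}$, a set of positive functionals. Apply $(iii)$ to obtain a set $B\subset S_{(\mathbb R^2,N)}$ of positive vectors norming $A$ and a functional $\Phi\in S_{(\mathbb R^2,N^*)}$ with $\Phi(x)=1$ for all $x\in B$. For each $\eta$, "$B$ norms $A$" produces a positive $b_\eta\in B$ with $x_\eta^*(b_\eta)$ close to $N^*(x_\eta^*)$; a small scaling/approximation argument (using that slices are relatively open and $B$ norms $A$ for every $\lambda<1$, i.e.\ $\lambda$-norms for all $\lambda$) places a positive multiple, or rather $b_\eta$ itself after rescaling into $B_{(\mathbb R^2,N)}$, inside $S_\eta$. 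Since $\Phi$ attains its norm at every $b_\eta$, Proposition~\ref{prop: norm attaining on norm additive} (applied to the set $\{b_\eta\}_\eta$, which has the property that $\Phi(x)=\|x\|\,\|\Phi\|$ for all its elements) gives $N(\sum_{i=1}^n b_{\eta_i})=\sum_{i=1}^n N(b_{\eta_i})$ for every finite selection, which is $(ii)$.

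For $(i)\Rightarrow(iii)$: this is where the two-dimensionality must be exploited, and it is the main obstacle. Given a set $A$ of positive norm-one functionals in $(\mathbb R^2,N^*)$, I want a single positive $B$ and a single $\Phi$ working for all of $A$ at once, even though $(i)$ (via Definition~\ref{def: positive sd2p} and the geometric criterion quoted after it) only gives finite information. The key is the explicit description: $N$ has the positive SD2P iff there is a positive $(a,b)$ with $N(t(a,1)+(1-t)(b,1))=1$ for all $t\in[0,1]$, i.e.\ the unit sphere of $(\mathbb R^2,N)$ contains a (nondegenerate) horizontal line segment from $(a,1)$ to $(b,1)$ — equivalently a segment on which a fixed functional, namely $(0,1)\in (\mathbb R^2,N^*)$ after suitable normalization (here $\Phi=(0,1)$ with $N^*(0,1)=1$), is identically $1$. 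Take $B$ to be the positive points of that segment and $\Phi=(0,1)$. Then $\Phi(x)=1=\|x\|\,\|\Phi\|$ for every $x\in B$ automatically. It remains to check $B$ norms $A$: every positive norm-one functional $x^*=(c,d)$ on $(\mathbb R^2,N)$ attains a value on the segment $[(a,1),(b,1)]$ that, by positivity of $c,d$ and convexity, equals $\max(ac+d,\,bc+d)$; one must verify this is $\geq\lambda$ for every $\lambda<1$, i.e.\ that $\max(ac+d,bc+d)=N^{**}(c,d)=N(c,d)$... here the point is that $(c,d)$ is a norm-one \emph{dual} vector, so $\max_{(s,t)\in B_{(\mathbb R^2,N)}}(sc+td)=1$, and since the maximizing $(s,t)$ can be taken positive (as $c,d\geq 0$) and the positive part of the unit ball's boundary near height $1$ is controlled by the segment, the supremum over $B$ recovers $1$. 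Care is needed if the segment degenerates or if $A$ contains functionals supported off the relevant face; I expect to handle these by an approximation/compactness argument in $\mathbb R^2$, possibly enlarging $B$ to the full positive part of the unit sphere of $(\mathbb R^2,N)$ and noting that $\Phi$ still attains its norm there only on the segment, so one should instead fix $B$ to be just the segment and argue the norming condition directly from the geometric characterization. The cleanest route may be to first prove the equivalence of $(i)$ with a slightly restated "segment" property, then read off $(iii)$.
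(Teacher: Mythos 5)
Your implications (ii)$\Rightarrow$(i) and (iii)$\Rightarrow$(ii) are fine and essentially coincide with the paper, which disposes of (ii)$\Leftrightarrow$(iii) in one line via Proposition~\ref{prop: norm attaining on norm additive} (for (iii)$\Rightarrow$(ii) there is in fact nothing to rescale: $B\subset S_{(\mathbb R^2,N)}$, so taking $\lambda=1-\varepsilon_\eta$ in ``$B$ $\lambda$-norms $A$'' places an element of $B$ directly into $S_\eta$). The substantive difference is in the hard implication. The paper proves (i)$\Rightarrow$(ii) by pure compactness: for each $n$ the finite positive SD2P yields positive points $(a_i^n,b_i^n)\in S_i$ with additive norms, a diagonal extraction in the compact set $B_{(\mathbb R^2,N)}$ produces limits $(a_i,b_i)\in S_i$ for which the additivity identities pass to the limit, and arbitrary families are then reduced to countable ones using the separability of $S_{(\mathbb R^2,N^*)}$. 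This requires no structural information about $N$ beyond finite-dimensionality.

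Your route (i)$\Rightarrow$(iii) via the segment characterization is genuinely different and, if completed, even stronger (it produces one $B$ and one $\Phi$ that norm \emph{all} positive functionals simultaneously), but as written it has a real gap: the claim that the segment norms every positive norm-one functional is exactly the step you leave open (``care is needed\dots the cleanest route may be\dots''). Moreover, the characterization as quoted in the paper, with the segment $t(a,1)+(1-t)(b,1)=(ta+(1-t)b,1)$ lying at height $1$, cannot be the statement to build on: it is satisfied by $a=b=0$ for every normalized norm, and for $N=\ell_1$ it only yields the degenerate set $B=\{(0,1)\}$, which fails to norm the functional $(1,0)$. The usable form (from Haller--Langemets--Nadel) asks for $a,b\in[0,1]$ with $N(a,1)=N(1,b)=1$ and the segment $[(a,1),(1,b)]$ contained in $S_{(\mathbb R^2,N)}$. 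With that version your plan does close: the positive part of $S_{(\mathbb R^2,N)}$ is the union of the horizontal piece $[(0,1),(a,1)]$, the segment, and the vertical piece $[(1,b),(1,0)]$; a positive functional $(c,d)$ satisfies $cs+d\le ca+d$ on the first piece and $c+dt\le c+db$ on the third, so its maximum over the positive unit ball --- which equals $N^*(c,d)$ by absoluteness --- is attained on the segment. Taking $B$ to be the segment and $\Phi$ a supporting functional of its midpoint (hence of the whole segment, by convexity) then gives (iii). As it stands, however, your proposal is a plan whose central verification is not carried out.
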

\begin{proof}
    (ii)$\implies$(i) is obvious and (ii)$\iff$(iii) follows from Proposition~\ref{prop: norm attaining on norm additive}.
    
    (i)$\implies$(ii). Suppose at first that we are given countably many slices $\{S_i:i\in\mathbb N\}$ in $B_{(\mathbb R^2,N)}$ defined by positive functionals. For every $n\in\mathbb N$ there are positive elements $(a_1^n,b_1^n),\ldots,(a_n^n,b_n^n)$ satisfying $(a_i^n,b_i^n)\in S_i$ for $i\in\{1,\ldots,n\}$ and $N(\sum_{i=1}^{n}(a_i^n,b_i^n))=\sum_{i=1}^{n}N(a_i^n,b_i^n)$. We proceed with a diagonal argument.
    
    Thanks to compactness, there are $I_1\subset\mathbb N$ and $(a_1,b_1)\in S_1$ such that $\lim_{i\in I_1}(a_1^i,b_1^i)=(a_1,b_1)$. Suppose we are given $I_n$, then we can find $I_{n+1}\subset I_n$ and $(a_{n+1},b_{n+1})\in S_{n+1}$ such that $\lim_{i\in I_{n+1}}(a_{n+1}^i,b_{n+1}^i)=(a_{n+1},b_{n+1})$. It is clear that for every $n\in\mathbb N$
    $$\lim_{i\in I_n}(a_j^i,b_j^i)=(a_j,b_j)\text{ for every }j\in\{1,\ldots,n\}.$$
    Now, the sequence $\{(a_i,b_i):i\in\mathbb N\}$ satisfies the desired properties, because, for every $n\in\mathbb N$,
    $$N(\sum_{j=1}^n(a_j,b_j))=\lim_{i\in I_n}N(\sum_{j=1}^n(a_j^i,b_j^i))=\lim_{i\in I_n}\sum_{j=1}^nN(a_j^i,b_j^i)=\sum_{j=1}^nN(a_j,b_j).$$
    Hence the implication (i)$\implies$(ii)$\iff$(iii) is proved whenever the family of slices is countable. Since $S_{(\mathbb R^2,N^*)}$ is separable, an argument similar to the one in Remark~\ref{rem: extend cs to all cardinals} shows that that claim holds for families of slices of any cardinality.
\end{proof}

\begin{proposition}\label{prop: positive SD2P}
  Let $X_1$ and $X_2$ be normed spaces, $Y_1\subset X_1^*$ and $Y_2\subset X_2^*$ non-trivial subspaces, $\kappa$ an infinite cardinal and $N$ an absolute normalized norm on $\mathbb R^2$. Let $P$ be one of the following properties: SD2P$_{<\kappa}$, 1-ASD2P$_{<\kappa}$ or ASD2P. If $(X_1,Y_1)$ and $(X_2,Y_2)$ satisfy $P$, then $N$ has the positive SD2P if and only if $(X_1\oplus_NX_2,Y_1\oplus_{N^*}Y_2)$ satisfies $P$.
\end{proposition}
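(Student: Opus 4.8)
The plan is to reduce everything to the characterizations of the three properties in terms of norming/attaining sets (Definition~\ref{def: kappa-SD2P}, Definition~\ref{def: kappa-CS}, Proposition~\ref{prop: characterization CS and SD2P}), to the characterization of the positive SD2P in Lemma~\ref{lem: positive SD2P}, and to the dual-norm formula $(X_1\oplus_NX_2)^*=X_1^*\oplus_{N^*}X_2^*$. Write $X=X_1\oplus_NX_2$ and $Y=Y_1\oplus_{N^*}Y_2$. I would treat the three cases $P\in\{\mathrm{SD2P}_{<\kappa},\mathrm{1\text{-}ASD2P}_{<\kappa},\mathrm{ASD2P}\}$ uniformly, indicating in each step where the $\lambda$-norming (for SD2P$_{<\kappa}$/ASD2P) versus genuine norming with exact attaining (for 1-ASD2P$_{<\kappa}$) makes a difference; the combinatorial index bound is ``cardinality $<\kappa$'' in the first two cases and ``finite'' in the third, and since $\kappa$ is infinite and $\mathcal P_{<\kappa}$ is closed under finite unions, the bookkeeping is the same.

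For the ``if'' direction (suppose $X\oplus_NX_2$-pair has $P$, deduce $N$ has the positive SD2P): given a family $(S_\eta)$ of slices of $B_{(\mathbb R^2,N)}$ defined by positive functionals $f_\eta\in S_{(\mathbb R^2,N^*)}$, I would lift each $f_\eta=(c_\eta,d_\eta)$ to a functional on $X$ by choosing fixed norm-attaining vectors $u^*\in S_{Y_1}$, $v^*\in S_{Y_2}$ and setting $\phi_\eta:=(c_\eta u^*,d_\eta v^*)\in S_Y$ (using that $N^*$ is absolute so $\|\phi_\eta\|_{N^*}=N^*(c_\eta,d_\eta)=1$). Applying $P$ to $\{\phi_\eta\}$ produces $B\subset S_X$ and $\psi\in S_{X^*}$; projecting the elements of $B$ through the norm map $(x,y)\mapsto(\|x\|,\|y\|)$ gives positive elements of $B_{(\mathbb R^2,N)}$, and one checks that these land in the $S_\eta$ (here positivity of $f_\eta$ and the absolute property of $N$ are used to see $f_\eta(\|x\|,\|y\|)\ge(1-\e)$ from $\phi_\eta(x,y)\ge(1-\e)$) and are normed/attained by the image of $\psi$. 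This yields condition (iii) of Lemma~\ref{lem: positive SD2P}, hence the positive SD2P.

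For the ``only if'' direction (suppose $(X_1,Y_1)$, $(X_2,Y_2)$ have $P$ and $N$ has the positive SD2P, deduce the sum-pair has $P$): given $A\subset S_Y$ with $|A|<\kappa$ (or finite), decompose each $\phi=(\phi^1,\phi^2)\in A$ into its components in $Y_1,Y_2$; the sets $A_1:=\{\phi^1/\|\phi^1\|\}$, $A_2:=\{\phi^2/\|\phi^2\|\}$ (discarding zero components) still have cardinality $<\kappa$. Apply $P$ in $X_1$ and $X_2$ to get $B_1\subset S_{X_1}$, $\psi_1\in S_{X_1^*}$ and $B_2\subset S_{X_2}$, $\psi_2\in S_{X_2^*}$; apply Lemma~\ref{lem: positive SD2P}(iii) to the family of positive functionals $(\|\phi^1\|,\|\phi^2\|)\in S_{(\mathbb R^2,N^*)}$ indexed by $\phi\in A$ to get positive pairs $(a_\phi,b_\phi)$ and a positive functional $(s,t)\in S_{(\mathbb R^2,N^*)}$. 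Then assemble $B:=\{(a_\phi\, x_1,\, b_\phi\, x_2): \phi\in A,\ x_1\in B_1,\ x_2\in B_2\}\subset S_X$ (normalization via the absolute property of $N$ and $N(a_\phi,b_\phi)=1$), and $\psi:=(s\psi_1,t\psi_2)\in S_{X^*}$. The verification splits as: $\psi$ attains its norm at (or $\lambda$-norms, for SD2P$_{<\kappa}$) each element of $B$ because $\psi_i$ does on $B_i$ and $(s,t)$ attains $N^*$ at $(a_\phi,b_\phi)$; and $B$ norms (resp.\ $\lambda$-norms) $A$ because for $\phi=(\phi^1,\phi^2)$, picking the right $x_1\in B_1$, $x_2\in B_2$ and the matching $(a_\phi,b_\phi)$, the value $\phi(a_\phi x_1,b_\phi x_2)=a_\phi\phi^1(x_1)+b_\phi\phi^2(x_2)$ is compared against $\|\phi\|_{N}$-type quantities using $N(a_\phi,b_\phi)=1$ once more.

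The main obstacle I expect is the second verification in the ``only if'' direction: getting the norming constant right, i.e.\ checking that when $B_i$ only $\lambda$-norms $A_i$ (the SD2P$_{<\kappa}$ and ASD2P cases) and $(a_\phi,b_\phi)$ only satisfies the \emph{exact} additive identity of Lemma~\ref{lem: positive SD2P}, the composite set $B$ still $\lambda$-norms (or norms, in the 1-ASD2P$_{<\kappa}$ case) $A$ with the required constant, uniformly over $\phi$. This is where the interplay between the ``approximate'' slice data and the ``exact'' positive-SD2P data has to be handled carefully; I anticipate invoking Lemma~\ref{lem: properties additive sets}(a) to pass between the additive identity for $(a_\phi,b_\phi)$ and the needed inequality $a_\phi\phi^1(x_1)+b_\phi\phi^2(x_2)\ge\lambda\,\|\phi\|_N$, together with the elementary fact that for positive $(a,b)$ with $N(a,b)=1$ and a norming functional $(s,t)$, one has $sa+tb=1$. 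The remaining steps are routine once the dual-norm identity and the absolute/positivity properties of $N$ are used consistently.
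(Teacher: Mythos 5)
Your construction for the substantive direction (positive SD2P of $N$ plus $P$ for the components implies $P$ for the sum) is essentially the paper's proof: the same decomposition of $A$ into normalized components $A_1,A_2$, the same application of Lemma~\ref{lem: positive SD2P} to the scalar data $\{(\|\phi^1\|,\|\phi^2\|)\}$, and the same assembled set $\{(a\,x_1,b\,x_2)\}$ with functional $(s\psi_1,t\psi_2)$; the square-root bookkeeping you anticipate is exactly what the paper does with $(1-\e/2)^{1/2}$-norming. Where you genuinely diverge is the converse: the paper simply says that the SD2P of the sum implies the positive SD2P of $N$ by adapting the proof of \cite[Theorem~3.5]{haller_stability_2017}, whereas you give a self-contained lifting argument ($f_\eta=(c_\eta,d_\eta)\mapsto(c_\eta u^*,d_\eta v^*)$ followed by the projection $(x,y)\mapsto(\|x\|,\|y\|)$). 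This is a reasonable and arguably cleaner route, and it is where the non-triviality of $Y_1,Y_2$ enters.

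There is, however, a gap in that converse in the SD2P$_{<\kappa}$ (and hence the decisive) case. You claim to reach condition (iii) of Lemma~\ref{lem: positive SD2P}, which demands \emph{exact} norming and \emph{exact} attaining; but from the SD2P of the sum your functional $\psi$ only satisfies $\psi(x,y)\ge\lambda$ on $B$, so the projected points only give $(\|\psi_1\|,\|\psi_2\|)(a_i,b_i)\ge\lambda$ and hence $N(\sum_i(a_i,b_i))\ge n\lambda$, not the exact additive identity $N(\sum_i(a_i,b_i))=\sum_iN(a_i,b_i)$ that Definition~\ref{def: positive sd2p} requires. You should instead target condition (i) (finitely many slices) and close the gap with a compactness argument in $(\mathbb R^2,N)$: take $\lambda_k\uparrow1$, extract convergent subsequences of the resulting points $(a_i^k,b_i^k)$, and use that slices are closed to land in $S_i$ with exact additivity in the limit (this is the same device the paper uses inside the proof of Lemma~\ref{lem: positive SD2P}). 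For the ASD2P and 1-ASD2P$_{<\kappa}$ cases no such step is needed, since there $\psi$ attains its norm exactly on $B$ and the projected identity is already exact. With that repair the argument is complete.
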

\begin{proof}
We are going to prove the claim only for the SD2P$_{<\kappa}$ since the proof for the 1-ASD2P$_{<\kappa}$ and for the ASD2P is almost identical.

    Suppose that $N$ has the positive SD2P. Fix $A\in\mathcal P_{<\kappa}(S_{Y_1\oplus_{N^*}Y_2})$ and $\varepsilon\in(0,1)$. Call $A_1:=\{y_1/\|y_1\|:(y_1,y_2)\in A\text{ and }y_1\not=0\}\subset S_{Y_1}$ and $A_2:=\{y_2/\|y_2\|:(y_1,y_2)\in A\text{ and }y_2\not=0\}\subset S_{Y_2}$. Since $(X_i,Y_i)$ has the SD2P$_{<\kappa}$, we can find $B_i\subset S_{X_i}$ and $x^*_i\in S_{X_i^*}$ such that
    $$B_i\ (1-\varepsilon/2)^{1/2}\text{-norms }A_i\text{ and }x^*_i(x)\ge1-\varepsilon\text{ for every }x\in B_i.$$
    Thanks to Lemma~\ref{lem: positive SD2P}, we can find $B\subset S_{(\mathbb R^2,N)}$ consisting of positive elements and $(c,d)\in S_{(\mathbb R^2,N^*)}$ such that
    $$B\text{ norms }\{(\|y_1\|,\|y_2\|):(y_1,y_2)\in A\}\text{ and }ac+bd=1\text{ for every }(a,b)\in B.$$
    Define $z^*:=(cx_1^*,dx_2^*)\in S_{(X_1\oplus_NX_2)^*}$ and
    $$\tilde B:=\{(ax_1,bx_2):(a,b)\in B,x_1\in B_1\text{ and }x_2\in B_2\}\subset S_{X_1\oplus_NX_2}.$$
    It is clear that, for every $(ax_1,bx_2)\in\tilde B$,
    $$z^*(ax_1,bx_2)=acx^*_1(x_1)+bdx_2^*(x_2)\ge(1-\varepsilon)(ac+bd)=1-\varepsilon.$$
    We claim that $\tilde B$ $(1-\varepsilon)$-norms $A$. Fix $(y_1,y_2)\in A$, find $x_i\in B_i$ and $(a,b)\in B$ such that $y_i(x_i)\ge(1-\varepsilon)^{1/2}\|y_i\|$ for $i\in\{1,2\}$ and $a\|y_1\|+b\|y_2\|\ge(1-\varepsilon)^{1/2}$. Thus $(y_1,y_2)(ax_1,bx_2)\ge(1-\varepsilon)^{1/2}(a\|y_1\|+b\|y_2\|)\ge1-\varepsilon$.
    
    Vice-versa assume that $(X_1\oplus_NX_2,Y_1\oplus_{N^*}Y_2)$ has the the SD2P$_{<\kappa}$, hence the SD2P. It is easy to adapt the proof of \cite[Theorem~3.5]{haller_stability_2017} to show that this implies that $N$ has the positive SD2P.
\end{proof}

Now we focus on $\ell_1$ and $\ell_\infty$-sums.

\begin{lemma}\label{lem: absolute sums 1}
Let $X_1$ and $X_2$ be normed spaces, $Y_1 \subset \du X_1$ and $Y_2 \subset \du X_2$ subspaces, $\lambda\in (0,1]$ and $\e\geq 0$.
The following statements hold:
\begin{enumerate}[(a)]
\item
If $A \subset S_{X_1}$ and $B \subset S_{X_2}$, then $A^\lambda\oplus_1B^\lambda\subset (A\oplus_\infty B)^\lambda$;
\item
If $A \subset S_{X_1 \oplus_\infty X_2}$, then 
\[
A^{(1-\e)} \cap (S_{Y_1} \oplus_1 S_{Y_2}) \subset p_{X_1}(A)^{(1-2\e)}\oplus_1 p_{X_2}(A)^{(1-2\e)};
\]
\item
If $x^* \in X^*_1 \setminus \{0\}$, then
$$(\{x^*\}^\lambda \cap S_{X_1}) \oplus_\infty B_{X_2} \subset \{x^*\oplus_10\}^\lambda\cap S_{X_1 \oplus_\infty X_2};$$ 
\item
If $x_1^* \oplus_1 x_2^* \in S_{X_1^* \oplus_1 X_2^*}$ and $A \subset  \{x_1^* \oplus_1 x_2^*\}^{(1-\e)} \cap S_{X_1 \oplus_\infty X_2}$, then either $p_{X_1}(A) \subset \{x_1^*\}^{(1-2\e)}\cap X_1$ or $p_{X_2}(A) \subset \{x_2^*\}^{(1-2\e)}\cap X_2$.
\end{enumerate}

\end{lemma}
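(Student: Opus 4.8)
The plan is to unwind the operation $B\mapsto B^{\lambda}$ using the explicit formulas $\|(x_1,x_2)\|_{\infty}=\max(\|x_1\|,\|x_2\|)$, $\|(x_1^{*},x_2^{*})\|_{1}=\|x_1^{*}\|+\|x_2^{*}\|$ and $(x_1^{*},x_2^{*})(x_1,x_2)=x_1^{*}(x_1)+x_2^{*}(x_2)$. Parts (a) and (c) then come out directly, while (b) and (d) both rest on one elementary observation: if $x_1^{*}(a_1)+x_2^{*}(a_2)$ is close to $\|x_1^{*}\|+\|x_2^{*}\|$ whereas $x_j^{*}(a_j)\le\|x_j^{*}\|\,\|a_j\|\le\|x_j^{*}\|$ for $j=1,2$, then each $x_j^{*}(a_j)$ is forced to be close to $\|x_j^{*}\|$; normalising by the factor $\|a_j\|\le1$ loses a factor $2$, which is precisely the drop from $1-\e$ to $1-2\e$. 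I will assume throughout that $\e<1/2$ (for $\e\ge1/2$ the parameter $1-2\e$ lies outside $(0,1]$ and the assertions involving it are empty), and I will treat (a), (c) first and then the two ``factor-$2$'' statements (b), (d).

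I would dispatch (c) first: for $x^{*}\in X_1^{*}\setminus\{0\}$, a pair $(x,y)$ with $x\in S_{X_1}$ and $y\in B_{X_2}$ has $\|(x,y)\|_{\infty}=1$, and the membership $x\in\{x^{*}\}^{\lambda}$ and the membership $(x,y)\in\{x^{*}\oplus_1 0\}^{\lambda}$ both amount to the single inequality $x^{*}(x)\ge\lambda\|x^{*}\|$ (using $\|x^{*}\oplus_1 0\|_1=\|x^{*}\|$), so the inclusion is a tautology. For (a), take $(x_1^{*},x_2^{*})\in A^{\lambda}\oplus_1 B^{\lambda}$ and $\eta>0$, pick $a\in A$, $b\in B$ with $x_1^{*}(a)\ge(\lambda-\eta)\|x_1^{*}\|$ and $x_2^{*}(b)\ge(\lambda-\eta)\|x_2^{*}\|$ (legitimate since $a,b$ are unit vectors); then $(a,b)\in A\oplus_\infty B$ satisfies $\|(a,b)\|_{\infty}=1$ and $(x_1^{*}\oplus_1 x_2^{*})(a,b)\ge(\lambda-\eta)\|x_1^{*}\oplus_1 x_2^{*}\|_1$, and letting $\eta\to0$ puts $x_1^{*}\oplus_1 x_2^{*}$ in $(A\oplus_\infty B)^{\lambda}$.

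For (b), let $x_1^{*}\oplus_1 x_2^{*}$ lie in the left-hand side, so $\|x_i^{*}\|=1$, $\|x_1^{*}\oplus_1 x_2^{*}\|_1=2$, and every $(a_1,a_2)\in A$ has $\max(\|a_1\|,\|a_2\|)=1$; the membership in $A^{(1-\e)}$ then yields, for each $\eta>0$, a point $(a_1,a_2)\in A$ with $x_1^{*}(a_1)+x_2^{*}(a_2)\ge 2(1-\e)-\eta$, whence $x_i^{*}(a_i)\ge 1-2\e-\eta$ (using $x_j^{*}(a_j)\le\|a_j\|\le1$ for the other index $j$), so $a_i\ne0$ and $x_i^{*}(a_i)/(\|x_i^{*}\|\,\|a_i\|)\ge 1-2\e-\eta$; letting $\eta\to0$ gives $x_i^{*}\in p_{X_i}(A)^{(1-2\e)}$ for $i=1,2$. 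For (d), set $\alpha=\|x_1^{*}\|$, $\beta=\|x_2^{*}\|$, so $\alpha+\beta=1$ and, up to interchanging the two factors, $\alpha\ge\tfrac12$; for $(a_1,a_2)\in A$ we have $\max(\|a_1\|,\|a_2\|)=1$ and $x_1^{*}(a_1)+x_2^{*}(a_2)\ge1-\e$, and since $x_2^{*}(a_2)\le\beta\|a_2\|\le\beta$ this forces $x_1^{*}(a_1)\ge\alpha-\e\ge\alpha-2\alpha\e=(1-2\e)\alpha\ge(1-2\e)\|x_1^{*}\|\,\|a_1\|$ (the middle step uses $\alpha\ge\tfrac12$, the last one $\|a_1\|\le1$), while $\alpha-\e>0$ gives $a_1\ne0$; hence $p_{X_1}(A)\subset\{x_1^{*}\}^{(1-2\e)}\cap X_1$. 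The only real content is this factor-$2$ bookkeeping together with the degenerate cases $a_i=0$; in (d) one should additionally notice that the dichotomy in the conclusion is decided simply by whichever of $\|x_1^{*}\|$, $\|x_2^{*}\|$ is at least $\tfrac12$. I do not expect any genuine obstacle beyond this.
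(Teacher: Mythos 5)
Your proof is correct. Parts (a)--(c) are precisely the routine verifications that the paper dismisses as ``obvious'', and your factor-of-two bookkeeping in (b) is the intended computation. The genuine divergence is in (d), the only part the paper actually argues: there the authors proceed by contradiction, choosing two (possibly different) witnesses $(x_1,y_1),(x_2,y_2)\in A$ that would violate the two inclusions simultaneously, summing them to get $2-2\e\le (x_1^*\oplus_1 x_2^*)\big((x_1,y_1)+(x_2,y_2)\big)$, and extracting the impossible cross-term estimate $x_1^*(x_2)+x_2^*(y_1)>1$ (impossible because $\|x_1^*\oplus_1x_2^*\|_1=1$ and $(x_2,y_1)\in B_{X_1\oplus_\infty X_2}$); the degenerate case $x_2^*=0$ is absorbed by making one of the violating inequalities non-strict. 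You instead argue directly: since $\|x_1^*\|+\|x_2^*\|=1$, the component with $\|x_i^*\|\ge\tfrac12$ satisfies $x_i^*(a_i)\ge\|x_i^*\|-\e\ge(1-2\e)\|x_i^*\|\|a_i\|$ for \emph{every} element of $A$. Your route is slightly more informative --- it identifies which disjunct holds and works one element of $A$ at a time --- while the paper's cross-term argument avoids the case split on the sizes of $\|x_1^*\|$ and $\|x_2^*\|$; both hinge on the same loss of a factor $2$ when normalising by $\|a_i\|\le1$. Your standing assumption $\e<\tfrac12$ is harmless, since the notation $B^{(1-2\e)}$ is only defined for $1-2\e\in(0,1]$ in the first place.
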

\begin{proof}
(a), (b) and (c) are obvious.
(d). Assume by contradiction (this includes the case $x_2^*=0$) that we can find $x_1\in p_{X_1}(A)$ and $y_2\in p_{X_2}(A)$ such that
$$\du x_1 (x_1)<(1-2\varepsilon)\|x_1^*\|\|x_1\|\text{ and }x_2^*(y_2)\leq(1-2\varepsilon)\|x_2^*\|\|y_2\|.$$
Choose $y_1$ and $x_2$ such that $(x_1,y_1),(x_2,y_2)\in A$, thus
\begin{align*}
    2-2\varepsilon &\le (x_1^*,x_2^*)((x_1,y_1)+(x_2,y_2))\\
    &<(1-2\varepsilon)(\|x_1^*\|\|x_1\|+\|x_2^*\|\|y_2\|)+x_1^*(x_2)+x_2^*(y_1)\\
    &\le1-2\varepsilon+x_1^*(x_2)+x_2^*(y_1),
\end{align*}
hence $x_1^*(x_2)+x_2^*(y_1)>1$, which is a contradiction since $(x_1^*,x_2^*)\in S_{X_1^*\oplus_1X_2^*}$ and $(x_2, y_1)\in B_{ X_1\oplus_\infty X_2}$.
\end{proof}

\begin{lemma}\label{lem: absolute sums 2}
  Let $X_1$ and $X_2$ be normed spaces, $Y_1 \subset \du X_1$ and $Y_2 \subset \du X_2$ subspaces, $\lambda$ and $\mu\in (0,1]$, $\e$ and $\delta\geq 0$.
The following statements hold:
\begin{itemize}
    \item[(a)]
    If $x^* \in X^*_1 \setminus \{0\}$, then
    $$(\{x^*\}^{\lambda} \cap X_1)^\mu \oplus_1 \du{X_2} \subset (\{x^*\oplus_1 0\}^\lambda \cap (X_1 \oplus_\infty X_2))^\mu.$$
    \item[(b)]
    Let $A \subset S_{Y_1}$, $B \subset S_{Y_2}$ and $x^*_1 \oplus_1 x_2^* \in S_{X^*_1 \oplus_1 X^*_2}$. If $A \oplus_1 B \subset (\{x_1^* \oplus_1 x_2^*\}^{(1-\e)} \cap (X_1 \oplus_\infty X_2))^{(1-\delta)}$, then either
    $$A \subset (\{x^*_1\}^{(1-2\e)} \cap X_1)^{(1-2\delta)}\text{ or }B\subset (\{x^*_2\}^{(1-2\e)} \cap X_2)^{(1-2\delta)}$$
\end{itemize}
\end{lemma}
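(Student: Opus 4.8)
The plan is to deduce both parts from Lemma~\ref{lem: absolute sums 1}, using repeatedly the two elementary observations of Remark~\ref{rem: properties of notation}: the operation $C\mapsto C^\lambda$ is monotone, and on a cone $C$ with $0\notin C$ it only depends on $C\cap S_X$, i.e. $C^\lambda=(C\cap S_X)^\lambda$.

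\textbf{Part (a).} I would argue directly. Fix $u^*\oplus_1 v^*\in(\{x^*\}^{\lambda}\cap X_1)^\mu\oplus_1 X_2^*$; since $u^*\neq 0$ (it lies in some $(\cdot)^\mu$), also $u^*\oplus_1 v^*\neq 0$. Let $\mu'\in(0,\mu)$. Because $\{x^*\}^\lambda\cap X_1$ is a cone, Remark~\ref{rem: properties of notation}(a) lets us replace it by $\{x^*\}^\lambda\cap S_{X_1}$, and the definition of $(\cdot)^\mu$ then provides $a\in\{x^*\}^\lambda\cap S_{X_1}$ with $u^*(a)\geq\mu'\|u^*\|$; note also $x^*(a)\geq\lambda\|x^*\|$. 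Choose $b\in S_{X_2}$ with $v^*(b)\geq\mu'\|v^*\|$ (any $b$ if $v^*=0$, using $\mu'<1$). Then $z:=(a,b)$ has $\|z\|_\infty=1$ and $(x^*\oplus_1 0)(z)=x^*(a)\geq\lambda\|x^*\oplus_1 0\|\,\|z\|_\infty$, so $z\in\{x^*\oplus_1 0\}^\lambda\cap(X_1\oplus_\infty X_2)$, while $(u^*\oplus_1 v^*)(z)=u^*(a)+v^*(b)\geq\mu'(\|u^*\|+\|v^*\|)=\mu'\|u^*\oplus_1 v^*\|\,\|z\|_\infty$. Letting $\mu'\uparrow\mu$ gives $u^*\oplus_1 v^*\in(\{x^*\oplus_1 0\}^\lambda\cap(X_1\oplus_\infty X_2))^\mu$. (Alternatively, (a) can be assembled from Lemma~\ref{lem: absolute sums 1}(c) and Lemma~\ref{lem: absolute sums 1}(a) after normalizing to unit spheres, with the $v^*=0$ slot treated separately.)

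\textbf{Part (b).} Write $Z:=X_1\oplus_\infty X_2$ and $C:=\{x_1^*\oplus_1 x_2^*\}^{(1-\varepsilon)}\cap Z$. If $A$ or $B$ is empty the conclusion is trivial, so assume both are nonempty. Since $C$ is a cone with $0\notin C$, Remark~\ref{rem: properties of notation}(a) gives $C^{(1-\delta)}=(C\cap S_Z)^{(1-\delta)}$, and $C\cap S_Z=\{x_1^*\oplus_1 x_2^*\}^{(1-\varepsilon)}\cap S_Z$. Now apply Lemma~\ref{lem: absolute sums 1} to the set $C\cap S_Z\subset S_Z$: part (b) (with its ``$\varepsilon$'' equal to our $\delta$) gives
\[
(C\cap S_Z)^{(1-\delta)}\cap(S_{Y_1}\oplus_1 S_{Y_2})\subset p_{X_1}(C\cap S_Z)^{(1-2\delta)}\oplus_1 p_{X_2}(C\cap S_Z)^{(1-2\delta)},
\]
and part (d) gives the dichotomy $p_{X_1}(C\cap S_Z)\subset\{x_1^*\}^{(1-2\varepsilon)}\cap X_1$ or $p_{X_2}(C\cap S_Z)\subset\{x_2^*\}^{(1-2\varepsilon)}\cap X_2$. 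By hypothesis $A\oplus_1 B\subset C^{(1-\delta)}\cap(S_{Y_1}\oplus_1 S_{Y_2})$, so the displayed inclusion, together with $A,B\neq\emptyset$, forces $A\subset p_{X_1}(C\cap S_Z)^{(1-2\delta)}$ and $B\subset p_{X_2}(C\cap S_Z)^{(1-2\delta)}$. Feeding in whichever alternative of the dichotomy holds and using monotonicity (Remark~\ref{rem: properties of notation}(b)) gives, respectively, $A\subset(\{x_1^*\}^{(1-2\varepsilon)}\cap X_1)^{(1-2\delta)}$ or $B\subset(\{x_2^*\}^{(1-2\varepsilon)}\cap X_2)^{(1-2\delta)}$.

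\textbf{Main obstacle.} The difficulty is organizational, not analytic: all the real content — and all the $\varepsilon\mapsto 2\varepsilon$, $\delta\mapsto 2\delta$ losses — is inherited from Lemma~\ref{lem: absolute sums 1}, and the only step needing genuine care is the passage between the cone $C$ and its trace $C\cap S_Z$ on the sphere, since that is precisely what puts the hypothesis into a form to which Lemma~\ref{lem: absolute sums 1}(b),(d) apply. The remaining points are quick degenerate-case checks: $\varepsilon=0$ (then $C$ is the norm-attaining set of $x_1^*\oplus_1 x_2^*$; if empty the hypothesis is vacuous, otherwise the argument is unchanged), $\delta=0$ (use a near-supremum in place of an attained one), $v^*=0$ in (a), and $A$ or $B$ empty in (b).
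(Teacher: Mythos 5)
Your proposal is correct and follows essentially the same route as the paper: part (b) is verbatim the paper's argument (pass from the cone $C$ to its trace on $S_{X_1\oplus_\infty X_2}$ via Remark~\ref{rem: properties of notation}(a), then apply Lemma~\ref{lem: absolute sums 1}(b) and (d) and conclude by monotonicity), and for part (a) your direct element-wise verification is just an unwinding of the paper's one-line chain of inclusions obtained from Lemma~\ref{lem: absolute sums 1}(a) and (c) after normalizing, as you yourself note in the parenthetical. No gaps; your explicit treatment of the degenerate cases ($v^*=0$, $A$ or $B$ empty) is a small bonus over the paper's terser write-up.
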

\begin{proof}
(a). Using Remark~\ref{rem: properties of notation}(b), Lemma~\ref{lem: absolute sums 1}(a) and (c), we obtain that
\begin{align*}
    (\{x^*\}^{\lambda} \cap X_1)^\mu \oplus_1 \du{X_2} &= (\{x^*\}^{\lambda} \cap S_{X_1})^\mu \oplus_1 (S_{X_2})^\mu\\
    &\subset((\{x^*\}^{\lambda} \cap S_{X_1}) \oplus_\infty S_{X_2})^\mu\\
    &\subset (\{x^* \oplus_1 0\}^\lambda \cap (X_1 \oplus_\infty X_2))^\mu.
\end{align*}

(b). If $C := \{x_1^* \oplus_1 x_2^*\}^{(1-\e)} \cap S_{X_1 \oplus_\infty X_2}$, then Remark~\ref{rem: properties of notation}(a) together with Lemma~\ref{lem: absolute sums 1}(b) implies that
$$A\oplus_1B\subset C^{(1-\delta)}\cap(S_{Y_1}\oplus_1 S_{Y_2})\subset p_{X_1}(C)^{(1-2\delta)}\oplus_1 p_{X_2}(C)^{(1-2\delta)},$$
hence $A \subset p_{X_1}(C)^{(1-2\delta)}$ and $B \subset p_{X_2}(C)^{(1-2\delta)}$.
On the other hand, Lemma~\ref{lem: absolute sums 1}(d) gives that either $p_{X_1}(C) \subset \{x_1^*\}^{(1-2\e)}\cap X_1$
or $p_{X_2}(C) \subset \{x_2^*\}^{(1-2\e)}\cap X_2$. The conclusion follows from Remark~\ref{rem: properties of notation}(b).
\end{proof}

\begin{proposition}\label{prop: infty sum for weak*CS_infty}
    Let $X_1$ and $X_2$ be normed spaces, $Y_1\subset \du X_1$ and $Y_2 \subset \du X_2$ subspaces and $\kappa$ an infinite cardinal. Let $P$ be one of the following properties: SD2P$_{<\kappa}$, 1-ASD2P$_{<\kappa}$ or ASD2P. The following are equivalent:
    \begin{itemize}[label=(\alph*)]
    \item[(i)]
    \label{item.comphasws}
     Either $(X_1,Y_1)$ or $(X_2,Y_2)$ satisfy $P$;
    \item[(ii)]
    \label{item.o1hasws}
    $(X_1\oplus_\infty X_2,Y_1\oplus_1 Y_2)$ satisfies $P$.
    \end{itemize}
\end{proposition}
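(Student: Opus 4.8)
**Proof strategy for Proposition~\ref{prop: infty sum for weak*CS_infty}.**

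The plan is to reduce everything to the characterizations already available in terms of the sets $(\{x^*\}^\lambda \cap X)^\mu$, so that the combinatorics happens at the level of Remark~\ref{rem: properties of notation} and Lemmas~\ref{lem: absolute sums 1} and \ref{lem: absolute sums 2} rather than directly with slices. Recall that by Proposition~\ref{prop: characterization CS and SD2P} and Definitions~\ref{def: kappa-SD2P}, \ref{def: kappa-CS}, each of the three properties $P$ for a couple $(Z,W)$ amounts to: every $A \in \mathcal P_{<\kappa}(S_W)$ (or $\mathcal P_{<\omega}(S_W)$ for the ASD2P) lies in $(\{z^*\}^\lambda \cap Z)^\mu$ for a suitable $z^* \in Z^*$, where $(\lambda,\mu)$ is $(\lambda,\lambda)$ ranging over $(0,1)$ for the SD2P$_{<\kappa}$, and $(1,\lambda)$ ranging over $\lambda \in (0,1)$ for the 1-ASD2P. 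I will carry out the argument uniformly in $(\lambda,\mu)$, pointing out the harmless loss of a factor of $2$ in the exponents, which is absorbed by letting $\lambda \uparrow 1$.

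First I would prove (i)$\Rightarrow$(ii). Say $(X_1,Y_1)$ satisfies $P$. Fix $A \in \mathcal P_{<\kappa}(S_{Y_1 \oplus_1 Y_2})$ and an exponent pair $(\lambda,\mu)$; write each element of $A$ as $y_1^* \oplus_1 y_2^*$. Set $A_1 := \{y_1^*/\|y_1^*\| : y_1^* \oplus_1 y_2^* \in A,\ y_1^* \neq 0\} \in \mathcal P_{<\kappa}(S_{Y_1})$. By the hypothesis on $(X_1,Y_1)$ there is $x^* \in X_1^*$ with $A_1 \subset (\{x^*\}^\lambda \cap X_1)^\mu$; by Remark~\ref{rem: properties of notation}(a) we may rescale so that $x^* \in S_{X_1^*}$, and then by the same remark $A \subset A_1 \oplus_1 (S_{Y_2}) \subset (\{x^*\}^\lambda \cap X_1)^\mu \oplus_1 Y_2$ up to rescaling. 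Now Lemma~\ref{lem: absolute sums 2}(a) gives
\[
(\{x^*\}^\lambda \cap X_1)^\mu \oplus_1 X_2^* \subset (\{x^* \oplus_1 0\}^\lambda \cap (X_1 \oplus_\infty X_2))^\mu,
\]
and since $Y_2 \subset X_2^*$ this yields $A \subset (\{x^* \oplus_1 0\}^\lambda \cap (X_1 \oplus_\infty X_2))^\mu$, i.e.\ $(X_1 \oplus_\infty X_2, Y_1 \oplus_1 Y_2)$ satisfies $P$. (The SD2P$_{<\kappa}$ uses $(\lambda,\lambda)$; the 1-ASD2P uses $(1,\lambda)$, where one notes $\{x^*\oplus_1 0\}^1 = \{x^*\}^1 \oplus_1 X_2^*$ is exactly what Lemma~\ref{lem: absolute sums 1}(c) and (a) deliver in the limiting case.)

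For (ii)$\Rightarrow$(i) I would argue by contradiction: suppose neither $(X_1,Y_1)$ nor $(X_2,Y_2)$ satisfies $P$. Then there are $A_1 \in \mathcal P_{<\kappa}(S_{Y_1})$, $A_2 \in \mathcal P_{<\kappa}(S_{Y_2})$ and an exponent pair $(\lambda,\mu)$ (we may take the same pair for both, shrinking $\lambda$) witnessing the failure: for every $x_1^* \in X_1^*$, $A_1 \not\subset (\{x_1^*\}^\lambda \cap X_1)^\mu$, and similarly for $A_2$. Consider $A := A_1 \oplus_1 A_2 \in \mathcal P_{<\kappa}(S_{Y_1 \oplus_1 Y_2})$ — here I use $S_{Y_1} \oplus_1 S_{Y_2} \subset S_{Y_1 \oplus_1 Y_2}$ since the dual of $\oplus_\infty$ is $\oplus_1$. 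By (ii) there is $z^* = x_1^* \oplus_1 x_2^* \in S_{X_1^* \oplus_1 X_2^*}$ with $A \subset (\{z^*\}^{\lambda'} \cap (X_1 \oplus_\infty X_2))^{\mu'}$ for the appropriate exponents $(\lambda',\mu')$ (with $\lambda' > \sqrt\lambda$, say, so that the factor-$2$ loss below still lands inside the failure range — more precisely I would run the whole argument with explicit $\varepsilon$'s as in Lemma~\ref{lem: absolute sums 2}(b)). Now Lemma~\ref{lem: absolute sums 2}(b) applies to $A_1 \oplus_1 A_2 \subset (\{x_1^* \oplus_1 x_2^*\}^{(1-\varepsilon)} \cap (X_1 \oplus_\infty X_2))^{(1-\delta)}$ and forces either $A_1 \subset (\{x_1^*\}^{(1-2\varepsilon)} \cap X_1)^{(1-2\delta)}$ or $A_2 \subset (\{x_2^*\}^{(1-2\varepsilon)} \cap X_2)^{(1-2\delta)}$, contradicting the choice of $A_1$, $A_2$ provided the original failure exponents were chosen with enough room (which is exactly why in the SD2P/1-ASD2P definitions the exponent quantifier ranges over all of $(0,1)$, or the attaining case fixes $\lambda = 1$ on the correct side so that $1-2\varepsilon$ can be pushed arbitrarily close to $1$).

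The main obstacle is bookkeeping the factor-of-$2$ degradation in the exponents through Lemma~\ref{lem: absolute sums 2}(b) and checking it is genuinely harmless for each of the three flavours of $P$: for the SD2P$_{<\kappa}$ this is immediate since the exponent is an arbitrary $\lambda \in (0,1)$; for the 1-ASD2P$_{<\kappa}$ one must be careful that the ``$1$'' exponent sits on the functional side (the $\{x^*\}^1$ slot), where it is preserved exactly by the $\varepsilon = 0$ case of Lemma~\ref{lem: absolute sums 1}(b),(d), while the norming exponent $\mu$ on the $A$ side is again an arbitrary $\lambda \in (0,1)$ and tolerates the loss; and for the (finite) ASD2P one simply restricts all the $\mathcal P_{<\kappa}$ to $\mathcal P_{<\omega}$ throughout, noting every lemma invoked holds verbatim for finite sets. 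I would therefore present the proof once, for the SD2P$_{<\kappa}$ with explicit $\varepsilon,\delta$, and then remark that the 1-ASD2P$_{<\kappa}$ and ASD2P cases are obtained by the obvious modifications, exactly as was done in Proposition~\ref{prop: positive SD2P}.
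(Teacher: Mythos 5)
Your proof is correct and follows essentially the same route as the paper, which derives (i)$\Rightarrow$(ii) from Lemma~\ref{lem: absolute sums 2}(a) and (ii)$\Rightarrow$(i) from Lemma~\ref{lem: absolute sums 2}(b); your contrapositive phrasing of the second direction and the explicit bookkeeping of the factor-of-two loss in the exponents are exactly what the paper leaves implicit. One cosmetic slip: $S_{Y_1}\oplus_1 S_{Y_2}$ consists of norm-two elements of $Y_1\oplus_1 Y_2$ rather than unit vectors, but this is harmless because the sets $(\cdot)^\lambda$ are invariant under positive scaling (Remark~\ref{rem: properties of notation}(a)), so one may normalize or invoke Remark~\ref{rem: def ASD2P}(a).
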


\begin{proof}
(i)$\implies$(ii) follows from (a) and (ii)$\implies$(i) from (b) of Lemma~\ref{lem: absolute sums 2}.
\end{proof}

\begin{remark}
    Notice that Lemma~\ref{lem: absolute sums 2} can be exploited for proving stability results for $\kappa$-octahedrality and for the failure of the $(-1)$-BCP$_\kappa$ too.
\end{remark}

\begin{proposition}\label{prop: CS_infty 1-sum}
  Let $(X_\eta)_\eta$ be a family of normed spaces, $Y_\eta\subset X^*_\eta$ subspaces and $\kappa$ an infinite cardinal. Let $P$ be one of the following properties: SD2P$_{<\kappa}$, 1-ASD2P$_{<\kappa}$ or ASD2P. The following are equivalent:
  \begin{itemize}
      \item[(i)] $(X_\eta,Y_\eta)$ satisfies $P$ for every $\eta$;
      \item[(ii)] $(\ell_1(X_\eta),\ell_\infty(Y_\eta))$ satisfies $P$.
  \end{itemize}
\end{proposition}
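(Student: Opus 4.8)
The plan is to prove both implications by passing between a single summand and the whole sum through the coordinate embeddings and projections, exploiting that the norm of $\ell_\infty(Y_\eta)$ is almost attained at one coordinate while the norm of $\ell_1(X_\eta)$ is additive across coordinates. I would treat the three choices of $P$ at once: for the two attaining versions one everywhere replaces the inequality ``$x^*(\cdot)\ge\lambda$'' by the equality ``$x^*(\cdot)=1$'' and, in place of the approximate estimates below, invokes the rigidity from Proposition~\ref{prop: norm attaining on norm additive}.

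\textbf{(i)$\implies$(ii).} Given $A\subset S_{\ell_\infty(Y_\eta)}$ of cardinality $<\kappa$ (finite in the ASD2P case) and $\lambda\in(0,1)$, I would split $A$ coordinatewise, putting $A_\eta:=\{y^*_\eta/\|y^*_\eta\|:(y^*_\zeta)_\zeta\in A,\ y^*_\eta\ne0\}\subset S_{Y_\eta}$, of cardinality at most $|A|$. Applying $P$ for $(X_\eta,Y_\eta)$ with parameter $\sqrt\lambda$ gives $B_\eta\subset S_{X_\eta}$ and $x^*_\eta\in S_{X_\eta^*}$ with $B_\eta$ $\sqrt\lambda$-norming $A_\eta$ and $x^*_\eta(x)\ge\sqrt\lambda$ (resp. $=1$) on $B_\eta$; for $\eta$ with $A_\eta=\emptyset$ take $B_\eta=\emptyset$ and any $x^*_\eta\in S_{X_\eta^*}$. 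Then $x^*:=(x^*_\eta)_\eta\in S_{\ell_\infty(X_\eta^*)}=S_{\ell_1(X_\eta)^*}$, and the union $B$ of the images of the $B_\eta$ under the canonical embeddings $X_\eta\hookrightarrow\ell_1(X_\eta)$ lies in $S_{\ell_1(X_\eta)}$. Since $x^*$ sees each member of $B$ through a single coordinate, $x^*(b)\ge\sqrt\lambda\ge\lambda$ (resp. $=1$) on $B$; and given $(y^*_\zeta)_\zeta\in A$ one chooses $\eta$ with $\|y^*_\eta\|>\sqrt\lambda$ and then $b\in B_\eta$ with $y^*_\eta(b)\ge\sqrt\lambda\|y^*_\eta\|>\lambda$, showing that $B$ $\lambda$-norms (resp. norms) $A$.

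\textbf{(ii)$\implies$(i).} Fix $\eta_0$. Given $A_0\subset S_{Y_{\eta_0}}$ of cardinality $<\kappa$ (finite in the ASD2P case) and $\lambda\in(0,1)$, I would embed it into the sum as $\widetilde A_0\subset S_{\ell_\infty(Y_\eta)}$, the image of $A_0$ under the canonical embedding $Y_{\eta_0}\hookrightarrow\ell_\infty(Y_\eta)$, of the same cardinality, and apply $P$ for $(\ell_1(X_\eta),\ell_\infty(Y_\eta))$ --- in the SD2P case with parameter $1-\delta$ where $\delta:=(1-\lambda)/(2-\lambda)$, in the attaining cases with $\lambda$. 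This yields $\widetilde B\subset S_{\ell_1(X_\eta)}$ and $x^*=(x^*_\eta)_\eta\in S_{\ell_\infty(X_\eta^*)}$ with $\widetilde B$ $(1-\delta)$-norming (resp. norming) $\widetilde A_0$ and $x^*(\widetilde b)\ge1-\delta$ (resp. $=1$) on $\widetilde B$. For $\widetilde b=(b_\eta)_\eta\in\widetilde B$, combining $\sum_\eta x^*_\eta(b_\eta)\ge1-\delta$ with $x^*_\eta(b_\eta)\le\|b_\eta\|$ and $\sum_\eta\|b_\eta\|=1$ yields $x^*_{\eta_0}(b_{\eta_0})\ge\|b_{\eta_0}\|-\delta$ (and in the attaining case $x^*_\eta(b_\eta)=\|b_\eta\|$ for every $\eta$), while the norming of $\widetilde A_0$ gives, for each $y^*\in A_0$, some $\widetilde b=(b_\eta)_\eta\in\widetilde B$ with $y^*(b_{\eta_0})\ge1-\delta$ (resp. $\ge\lambda$), so that $\|b_{\eta_0}\|\ge y^*(b_{\eta_0})$ is correspondingly close to $1$. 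Then $B_0:=\{b_{\eta_0}/\|b_{\eta_0}\|:(b_\eta)_\eta\in\widetilde B,\ \|b_{\eta_0}\|\ge1-\delta\}\subset S_{X_{\eta_0}}$ (resp. $\{b_{\eta_0}/\|b_{\eta_0}\|:b_{\eta_0}\ne0\}$) together with $x_0^*:=x^*_{\eta_0}/\|x^*_{\eta_0}\|\in S_{X_{\eta_0}^*}$ is checked, via these estimates and the choice of $\delta$, to witness $P$ for $(X_{\eta_0},Y_{\eta_0})$.

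\textbf{Main obstacle.} The delicate point is the converse: a priori the functional $x^*=(x^*_\eta)_\eta$ produced by the hypothesis could concentrate its action away from $\eta_0$, so its $\eta_0$-component need not even approximately attain its norm on $B_0$. For the SD2P this is defeated by running the hypothesis with the parameter $1-\delta$ close to $1$, which forces $\|b_{\eta_0}\|$ close to $1$ and hence the other coordinates to carry negligible mass; for the attaining versions the equality $x^*(\widetilde b)=\|\widetilde b\|_{\ell_1}$ automatically forces coordinatewise norm attainment, so nothing is lost. The rest is bookkeeping --- one should keep all auxiliary cardinalities $\le|A|<\kappa$ and verify that the three cases really differ only cosmetically.
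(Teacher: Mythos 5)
Your proposal is correct and follows essentially the same route as the paper: the forward direction is the paper's argument verbatim (coordinatewise splitting with parameter $\lambda^{1/2}$, then assembling the union of the embedded $B_\eta$'s and the $\ell_\infty$-functional $(x^*_\eta)_\eta$), and your converse is exactly the computation the paper packages into Lemma~\ref{lem: absolute sums 1}(b) -- the $\ell_1$-additivity of the norm forces $x^*_{\eta_0}(b_{\eta_0})\ge\|b_{\eta_0}\|-\delta$ (with equality in the attaining cases), so projecting to the coordinate $\eta_0$ and normalizing only degrades the parameter controllably. The paper accepts the degraded constant $1-2\e$ and lets $\e\to0$, whereas you precalibrate $\delta=(1-\lambda)/(2-\lambda)$ to land exactly on $\lambda$; this is a cosmetic difference.
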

\begin{proof}
We are going to prove the claim only for the SD2P$_{<\kappa}$ since the proofs for the 1-ASD2P$_{<\kappa}$ and for the ASD2P are almost identical.

  (i)$\implies$(ii). Fix $A \in\mathcal P_{<\kappa}(S_{\ell_\infty(Y_\eta)})$ and $\lambda\in(0,1)$. For every $\eta$ find $C_\eta\subset S_{X_\eta}$ and $x^*_\eta\in S_{X^*_\eta}$ such that $C_\eta$ $\lambda^{1/2}$-norms $p_{Y_\eta}(A)$ and $x^*_\eta (x)\ge\lambda$ for every $x\in C_\eta$.
  Set $C:=\bigcup_\eta i_{X_\eta}(C_\eta)\subset S_{\ell_1(X_\eta)}$ and define $x^*\in S_{\ell_\infty(X^*_\eta)}$ by $x^*(\eta):=x^*_\eta$. It is clear that $x^*(x)\ge\lambda$ for every $x\in C$. We claim that $C$ $\lambda$-norms $A$. For any $\du y \in A$ find $\eta$ such that $\n{\du y(\eta)} \geq \lambda^{1/2}$ and $x_\eta \in C_\eta$ satisfying $\du y(\eta)(x_\eta) \geq \lambda^{1/2} \n{\du y(\eta)}$.
  We conclude that $\du y (i_{X_\eta} x_\eta) =\du y(\eta) (x_\eta) \geq \lambda$.
  
  (ii)$\implies$(i). It is enough to prove that $(X_1,Y_1)$ has the SD2P$_{<\kappa}$ whenever $(X_1 \oplus_1 X_2,Y_1\oplus_\infty Y_2)$ has it. Fix $ A \in\mathcal P_{<\kappa}(S_{Y_1})$ and $\varepsilon\in(0,1)$. There is $x^*_1\oplus_\infty x^*_2\in S_{X^*_1\oplus_\infty X^*_2}$ such that, if we call $C := \{\du x_1 \oplus_\infty \du x_2\}^{(1-\e)} \cap (X_1 \oplus_1 X_2)$, then $A \oplus_\infty \{0\} \subset C^{(1-\varepsilon)}$. It is clear that $A \subset p_{X_1}(C)^{(1-\varepsilon)}$. On the other hand, $p_{X_1}(C) \cap S_{X_1} \subset \{\du x_1\}^{(1-2\e)}$ due to Lemma~\ref{lem: absolute sums 1}(b). We conclude that $A\subset(\{x_1^*\}^{(1-2\varepsilon)})^{(1-\varepsilon)}$ thanks to Remark~\ref{rem: properties of notation}.
  
  \begin{comment}
  (ii)$\implies$(i). It is enough to prove that $(X_1,Y_1)$ has the property whenever so does $(X_1 \oplus_1 X_2,Y_1\oplus_\infty Y_2)$. For that it is enough to observe that if 
  \[A \oplus_\infty \{0\} \subset (\{\du x_1 \oplus_\infty \du x_2\}^{(1-\e)} \cap X_1 \oplus_1 X_2)^\mu,\]
then $A \subset (\{\du x_1\}^{(1-2\e)} \cap X_1)^\mu$, for any $A \subset S_{\du X_1}$, $\du x_1 \oplus_\infty \du x_2 \in S_{\du X_1 \oplus_\infty \du X_2}$, $\mu \in (0, 1]$, and $\e \in [0,1)$. Indeed, put $C := \{\du x_1 \oplus_\infty \du x_2\}^{(1-\e)} \cap X_1 \oplus_1 X_2$ and assume that $A \oplus_\infty \{0\} \subset C^\mu$. It is clear that $A \subset p_{X_1}(C)^\mu$.
On the other hand, $p_{X_1}(C) \cap S_{X_1} \subset 
\{\du x_1\}^{(1-2\e)}$ due to Lemma~\ref{lem: absolute sums 1}.(b).
  
  \alcomment{OLD VERSION:
  
  Fix $ A \in\mathcal P_{<\kappa}(S_{Y_1})$ and $\varepsilon\in(0,1)$. Find $\delta>0$ such that $(1-\delta)^2-\delta\ge1-\varepsilon$ and $C \subset S_{X_1 \oplus_1 X_2}$ and $(x^*_1,x^*_2)\in S_{X^*_1\oplus_\infty X^*_2}$ such that $C$ $(1-\delta)$-norms $A \oplus_\infty \{0\}$ and $(x^*_1,x^*_2)(x_1,x_2)\ge1-\delta$ for every $(x_1,x_2)\in C$. Notice that without loss of generality we can assume that $\|x\|\ge1-\delta$ for every $x\in p_{X_1}(C)$. It is clear that $p_{X_1}(C)$ $(1-\delta)$-norms $A$. Moreover, for every $(x_1,x_2)\in C$
  $$x^*_1(x_1)+\|x_2\|\ge (x^*_1,x^*_2)(x_1,x_2)\ge1-\delta=(1-\delta)(\|x_1\|+\|x_2\|),$$
  hence
  $$x^*_1(x_1)\ge(1-\delta)\|x_1\|-\delta\|x_2\|\ge(1-\delta)^2-\delta\ge1-\varepsilon.$$

 }\end{comment}
  \end{proof}
  
  We conclude this section by noting that any infinite $\ell_\infty$-sum of dual spaces always has the weak$^*$ 1-ASD2P with respect to the maximum meaningful cardinality.

\begin{proposition}\label{prop: ell_infty has w*-CS}
  If $(X_\eta)_\eta$ is an infinite family of non-trivial normed spaces, then $\ell_1(X_\eta)^*$ has the weak$^*$ 1-ASD2P$_{|\ell_1(X_\eta)|}$.
\end{proposition}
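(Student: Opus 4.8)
The plan is to work directly with $X:=\ell_1(X_\eta)$ together with the standard identification $X^*=\ell_\infty(X_\eta^*)$, writing $\Gamma$ for the (infinite) index set. Unravelling Definition~\ref{def: kappa-CS} for the couple $(X^*,X)$, the assertion is that for every $A\subset S_X$ with $|A|\le|X|$ there are $B\subset S_{X^*}$ and $\phi\in S_{X^{**}}$ such that $B$ norms $A$ and $\phi(b)=1$ for all $b\in B$; since $|S_X|\le|X|$, it suffices to treat $A=S_X$ (alternatively, invoke Remark~\ref{rem: extend cs to all cardinals}). Moreover, by Proposition~\ref{prop: norm attaining on norm additive}, the existence of such a $\phi$ is equivalent to $B$ being \emph{norm-additive} in $X^*$, i.e. $\|b_1+\dots+b_m\|=m$ whenever $b_1,\dots,b_m\in B$. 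So the whole statement reduces to producing a norm-additive $B\subset S_{X^*}$ that norms $S_X$.

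The key device is to fix once and for all a reference functional $\Theta=(\theta_\eta)_{\eta\in\Gamma}\in S_{X^*}$, with $\theta_\eta\in S_{X_\eta^*}$ chosen arbitrarily (this is the only place where non-triviality of the $X_\eta$ is used), and to set
$$B:=\bigl\{\beta=(\beta_\eta)_\eta\in S_{X^*}:\ \beta_\eta=\theta_\eta\text{ for all but finitely many }\eta\bigr\}.$$
Norm-additivity is then immediate from the infinitude of $\Gamma$: given $b_1,\dots,b_m\in B$, each $b_j$ agrees with $\Theta$ off a finite set, the union $F$ of these sets is still finite, so there is a coordinate $\eta_0\in\Gamma\setminus F$, and there $(b_1+\dots+b_m)(\eta_0)=m\theta_{\eta_0}$ has norm $m$; since $\|b_1+\dots+b_m\|\le m$ always, equality follows.

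It remains to check that $B$ norms $S_X$. Given $a=(x_\eta^a)_\eta\in S_X$ and $\lambda\in(0,1)$, use $\sum_\eta\|x_\eta^a\|=1$ to pick a finite $F\subset\Gamma$ with $\sum_{\eta\notin F}\|x_\eta^a\|<(1-\lambda)/2$, and for $\eta\in F$ pick (Hahn--Banach) $\psi_\eta\in S_{X_\eta^*}$ with $\psi_\eta(x_\eta^a)=\|x_\eta^a\|$. Then the functional $\beta$ defined by $\beta_\eta:=\psi_\eta$ on $F$ and $\beta_\eta:=\theta_\eta$ off $F$ lies in $B$, and a one-line estimate gives $\beta(a)=\sum_{\eta\in F}\|x_\eta^a\|+\sum_{\eta\notin F}\theta_\eta(x_\eta^a)>\lambda$. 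Hence $\sup_{\beta\in B}\beta(a)=1$ for every $a\in S_X$, so $B$ norms $S_X$. Feeding $B$ into Proposition~\ref{prop: norm attaining on norm additive} produces $\phi\in X^{**}\setminus\{0\}$ with $\phi(b)=\|\phi\|$ for all $b\in B$, and $\phi/\|\phi\|$ is the required element of $S_{X^{**}}$.

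The only genuinely delicate point is the choice of $B$: one must take the functionals that are \emph{eventually equal to a fixed $\Theta$}. Finitely supported functionals, say, would fail norm-additivity; it is precisely the ``eventual agreement'' that simultaneously forces norm-additivity (a finite union of finite sets cannot exhaust the infinite $\Gamma$) and still leaves enough freedom on finitely many coordinates to norm an arbitrary vector of $\ell_1(X_\eta)$, whose mass concentrates on a finite set of coordinates. Everything else is bookkeeping.
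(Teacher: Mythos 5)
Your proof is correct and is essentially the paper's own argument: your set $B$ of norm-one functionals eventually equal to a fixed $\Theta$ is exactly the paper's $(y^*+c_{00}(X_\eta^*))\cap S_{\ell_\infty(X_\eta^*)}$, the norm-additivity argument via a common coordinate outside a finite union is identical, and both proofs conclude through Proposition~\ref{prop: norm attaining on norm additive}. The only (immaterial) difference is that you norm each coordinate in $F$ separately by Hahn--Banach, whereas the paper takes a single functional norming $x$ and restricts it to the finite set $I$.
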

\begin{proof}
Find $y^* \in S_{\ell_\infty(X_\eta^*)}$ such that $\n{y^*(\eta)} = 1$ for every $\eta$ and set $A:=(y^* + c_{00}(X_\eta^*))\cap S_{\ell_\infty(X^*_\eta)}$. We claim that $A$ norms $\ell_1(X_\eta)$.

Fix $x\in S_{\ell_1(X_\eta)}$ and $\e > 0$. Find a finite set $I$ such that $\sum_{\eta\notin I}\|x(\eta)\|<\varepsilon/2$. Pick $x^*\in S_{\ell_\infty(X_\eta^*)}$ such that $x^*(x) = 1$ and define $\tilde x^* \in A$ by
  $$\tilde x^*(\eta):=\bigg\{
  \begin{array}{ll}
    x^*(\eta)&\text{if }\eta\in I,\\
    y^*(\eta)&\text{if }\eta\notin I. 
  \end{array}$$
  Therefore,
  $$\tilde x^*(x)=\sum_\eta \tilde x^*(\eta)(x(\eta))\ge\sum_{\eta\in I}x^*(\eta)(x(\eta))-\varepsilon/2\ge x^*(x)-\varepsilon = 1-\varepsilon,$$
  hence the claim. In order to conclude, we only need to find $x^{**}\in S_{X^{**}}$ such that $x^{**}(x^*)=1$ for every $x^*\in A$. For every $x^*_1, \dots, x^*_n\in A$, there is $\eta$ such that $x^*_i(\eta) = y^*(\eta)$ for all $i\in\{1, \dots, n\}$. Thus, $\|\sum_{i=1}^{n} x^*_i\|_\infty\ge\|\sum_{i=1}^{n} y^*(\eta)\|=n$ and the conclusion follows from Proposition~\ref{prop: norm attaining on norm additive}.
\end{proof}

\section{Tensor products}\label{sec: tensor products}

Given two Banach spaces $X$ and $Y$, we will denote by $X\pten Y$ the projective and by $X\iten Y$ the injective tensor product of $X$ and $Y$. Recall that the space $\mathcal B(X\times Y )$ of bounded bilinear forms defined on $X\times Y$ is isometrically isomorphic to the topological dual of $X\pten Y$. We refer to \cite{Ryan2002a} for a detailed treatment and applications of tensor products.

It is known that the projective tensor product preserves both the SD2P \cite[Corollary~3.6]{becerra_guerrero_octahedral_2015} and the ASD2P \cite[Proposition~3.6]{lopez-perez_strong_2019}. We begin this section by extending this result to the 1-ASD2P$_\kappa$.

The following lemma implies that, in the definition of the 1-ASD2P$_\kappa$, we can replace the request of finding a set that norms a fixed set of functionals with the condition that the the set visits a fixed set of relatively weakly open sets in $B_X$. 

\begin{lemma}\label{lem: CS for RWO}
    Let $X$ be a normed space, $\lambda\in(0,1]$ and $\kappa$ an infinite cardinal. The following are equivalent:
    \begin{itemize}
        \item[(i)] For every family of slices $\mathfrak S$ of $B_X$ of cardinality strictly smaller than $\kappa$ there are $A\subset S_X$ and $x^*\in S_{X^*}$ such that $A$ visits every slice of $\mathfrak S$ and $x^*(x)\ge\lambda$ for every $x\in A$;
        \item[(ii)] For every family of non-empty relatively weakly open sets $\mathfrak U$ in $B_X$ of cardinality strictly smaller than $\kappa$ there are $A\subset S_X$ and $x^*\in S_{X^*}$ such that $A$ visits every set of $\mathfrak U$ and $x^*(x)\ge\lambda$ for every $x\in A$.
    \end{itemize}
\end{lemma}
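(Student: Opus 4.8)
The plan is to prove the equivalence by establishing the only non-trivial direction, namely (i)$\implies$(ii), since (ii)$\implies$(i) is immediate from the fact that every slice of $B_X$ is a non-empty relatively weakly open subset of $B_X$. So assume (i) holds and fix a family $\mathfrak U=(U_\eta)_\eta$ of non-empty relatively weakly open subsets of $B_X$, with $|\mathfrak U|<\kappa$. The key tool is Bourgain's lemma \cite[Lemma~II.1]{Ghoussoub1987b}: each $U_\eta$ contains a finite convex combination of slices $\sum_{j=1}^{n_\eta}\mu_j^\eta S_j^\eta$ of $B_X$.

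The natural approach is then to pass to the family of \emph{all} slices appearing in these convex combinations, $\mathfrak S:=\{S_j^\eta : \eta,\ 1\le j\le n_\eta\}$, which still has cardinality $<\kappa$ (a union of $<\kappa$ finite sets, using that $\kappa$ is infinite). Applying (i) to $\mathfrak S$ gives $A\subset S_X$ and $x^*\in S_{X^*}$ with $x^*(x)\ge\lambda$ for all $x\in A$ and with $A$ visiting every slice in $\mathfrak S$. For each $\eta$ and each $j\in\{1,\dots,n_\eta\}$ pick $a_j^\eta\in A\cap S_j^\eta$; I would like to replace $A$ by the set $\tilde A:=\{\sum_{j=1}^{n_\eta}\mu_j^\eta a_j^\eta : \eta\}$ of the resulting convex combinations, which lies in $B_X$ and visits every $U_\eta$ by construction. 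The functional $x^*$ still satisfies $x^*(\sum_{j}\mu_j^\eta a_j^\eta)=\sum_j\mu_j^\eta x^*(a_j^\eta)\ge\lambda$ on $\tilde A$ by convexity.

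The one point requiring care — and the main obstacle — is that the statement of (ii) asks for $A\subset S_X$, whereas the convex combinations $\sum_j\mu_j^\eta a_j^\eta$ produced above need not have norm exactly $1$; a priori they only lie in $B_X$ with $x^*$-value $\ge\lambda$, so their norm is at least $\lambda$. I would handle this by the standard rescaling / replacement trick: since the $U_\eta$ are relatively weakly open in $B_X$ and contain points of norm $\ge\lambda>0$, one can either absorb the slightly-smaller-norm issue into the freedom $\lambda\in(0,1]$ directly (observe that the condition $x^*(x)\ge\lambda$ with $x\in B_X$ already forces $\norm{x}\ge\lambda$, and for the purpose of this lemma one may just as well work with $A\subset B_X\setminus\lambda B_X$, which is how the property is used in Lemma~\ref{lem: SD2P=infinite SD2P}), or note that in an infinite-dimensional space every relatively weakly open subset of $B_X$ meeting $\lambda B_X$ nontrivially actually meets $S_X$, and perturb each chosen point within $U_\eta$ to land on the sphere while keeping the $x^*$-value above $\lambda$ (shrinking $\lambda$ infinitesimally if desired). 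Either way, after this adjustment $\tilde A\subset S_X$ together with $x^*$ witnesses (ii), completing the proof. I expect the write-up to spend most of its length on this normalization step and on carefully invoking Bourgain's lemma, with everything else being routine.
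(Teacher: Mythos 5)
Your argument is essentially the paper's: the trivial direction, then Bourgain's lemma, collecting all slices occurring in the finite convex combinations into one family of cardinality $<\kappa$, applying (i) to that family, and taking the resulting convex combinations $\sum_{i}\lambda_{i,U}x_{i,U}$ as the set visiting $\mathfrak U$, with the same $x^*$ by convexity. The normalization issue you spend most of your effort on (these convex combinations a priori lie only in $B_X$, not $S_X$) is a real point, but it is one the paper's own proof silently passes over; note that in the paper's sole application of this lemma one has $\lambda=1$, where $x^*\in S_{X^*}$ and $x^*(x)\ge 1$ on $B_X$ force every such convex combination onto the unit sphere automatically, so no perturbation or rescaling is needed there.
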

\begin{proof}
    (ii)$\implies$(i) is obvious since every slice is a non-empty relatively weakly open subset of the unit ball.
    
  (i)$\implies$(ii). Fix $\mathfrak U$ as in the assumption. Thanks to Bourgain's lemma \cite[Lemma~II.1]{Ghoussoub1987b}, for every $U\in\mathfrak U$ we can find some finite convex combination of slices $\sum_{i=1}^{n_U}\lambda_{i,U}S_{i,U}\subset U$. There is a set $A=\{x_{i,U}:i\in\{1,\ldots,n_U\},U\in\mathfrak U\}\subset S_X$ such that $x_{i,U}\in S_{i,U}$ and there is $x^*\in S_{X^*}$ such that $x^*(x_{i,U})\ge\lambda$ for every $i\in\{1,\ldots,n_U\}$ and $U\in\mathfrak U$. The set $\{\sum_{i=1}^{n_U}\lambda_{i,U}x_{i,U}:U\in\mathfrak U\}$ visits every set of $\mathfrak U$ and $x^*(\sum_{i=1}^{n_U}\lambda_{i,U}x_{i,U})\ge\lambda$ for every $U\in\mathfrak U$.
\end{proof}

 \begin{proposition}\label{prop: CS in proj tensor product}
	Let $X$ and $Y$ be Banach spaces and $\kappa$ an infinite cardinal. If $X$ and $Y$ have the 1-ASD2P$_{<\kappa}$, then $X\pten Y$ has the 1-ASD2P$_{<\kappa}$.
\end{proposition}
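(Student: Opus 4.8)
The plan is to use the reformulation of the 1-ASD2P$_{<\kappa}$ afforded by Lemma~\ref{lem: CS for RWO}: taking $\lambda=1$ there, this property of $X\pten Y$ is equivalent to the assertion that for every family $\mathfrak S$ of slices of $B_{X\pten Y}$ with $|\mathfrak S|<\kappa$ there exist $\mathcal A\subset S_{X\pten Y}$ meeting every slice of $\mathfrak S$ and $\Phi\in S_{(X\pten Y)^*}$ with $\Phi(z)=1$ for all $z\in\mathcal A$. So fix such an $\mathfrak S$, indexed by a set $\Lambda$ with $|\Lambda|<\kappa$, and write $S_\alpha=\{z\in B_{X\pten Y}:\Psi_\alpha(z)\ge 1-\e_\alpha\}$ with $\Psi_\alpha\in S_{(X\pten Y)^*}$ and $\e_\alpha\in(0,1)$. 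I would use that $(X\pten Y)^*=\mathcal B(X\times Y)$ isometrically and that $\|u\otimes v\|_\pi=\|u\|\|v\|$; in particular $\sup_{u\in B_X,\,v\in B_Y}\Psi_\alpha(u\otimes v)=\|\Psi_\alpha\|=1$, so we may pick $u_\alpha\in S_X$ and $v_\alpha\in S_Y$ with $\Psi_\alpha(u_\alpha\otimes v_\alpha)>1-\e_\alpha/3$ for each $\alpha\in\Lambda$.

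The argument then runs in two stages, treating $X$ first and $Y$ afterwards — this order is forced, since a bilinear form does not split into a product of functionals. First, for each $\alpha$ set $\phi_\alpha:=\Psi_\alpha(\,\cdot\,\otimes v_\alpha)\in X^*$, so that $\|\phi_\alpha\|\ge\phi_\alpha(u_\alpha)>1-\e_\alpha/3$. The crucial point is that the family $\mathcal F:=\{\phi_\alpha/\|\phi_\alpha\|:\alpha\in\Lambda\}\subset S_{X^*}$ has cardinality at most $|\Lambda|<\kappa$, each slice contributing only one functional. Applying the 1-ASD2P$_{<\kappa}$ of $X$ to $\mathcal F$ produces $B\subset S_X$ norming $\mathcal F$ together with $f\in S_{X^*}$ such that $f(x)=1$ for every $x\in B$. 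For each $\alpha$, since $B$ norms $\phi_\alpha/\|\phi_\alpha\|$ and $\|\phi_\alpha\|>1-\e_\alpha/3$, I pick $x_\alpha\in B$ with $\phi_\alpha(x_\alpha)>1-\e_\alpha/3$, i.e.\ $\Psi_\alpha(x_\alpha\otimes v_\alpha)>1-\e_\alpha/3$. Second, set $\psi_\alpha:=\Psi_\alpha(x_\alpha\otimes\,\cdot\,)\in Y^*$, so that $\|\psi_\alpha\|\ge\psi_\alpha(v_\alpha)>1-\e_\alpha/3$; again $\mathcal G:=\{\psi_\alpha/\|\psi_\alpha\|:\alpha\in\Lambda\}\subset S_{Y^*}$ has cardinality $<\kappa$, so the 1-ASD2P$_{<\kappa}$ of $Y$ yields $B'\subset S_Y$ norming $\mathcal G$ and $g\in S_{Y^*}$ with $g(y)=1$ for every $y\in B'$. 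For each $\alpha$ I pick $y_\alpha\in B'$ with $\psi_\alpha(y_\alpha)$ close enough to $\|\psi_\alpha\|$ that $\Psi_\alpha(x_\alpha\otimes y_\alpha)=\psi_\alpha(y_\alpha)>1-\e_\alpha$.

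To conclude, put $\mathcal A:=\{x_\alpha\otimes y_\alpha:\alpha\in\Lambda\}$. Each $x_\alpha\otimes y_\alpha$ has $\pi$-norm equal to $1$ and satisfies $\Psi_\alpha(x_\alpha\otimes y_\alpha)>1-\e_\alpha$, so it lies in $S_{X\pten Y}\cap S_\alpha$; thus $\mathcal A$ meets every slice of $\mathfrak S$. Finally let $\Phi:=f\otimes g\in\mathcal B(X\times Y)=(X\pten Y)^*$, i.e.\ the bilinear form $(x,y)\mapsto f(x)g(y)$. Then $\|\Phi\|=\|f\|\|g\|=1$, while $\Phi(x_\alpha\otimes y_\alpha)=f(x_\alpha)g(y_\alpha)=1$ for every $\alpha$, which is exactly what the reformulation demands; hence $X\pten Y$ has the 1-ASD2P$_{<\kappa}$.

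The step I expect to be the real obstacle is the need for a single dual element $\Phi$ attaining its norm simultaneously along points meeting every member of the possibly large family $\mathfrak S$ — handling the slices one at a time, in the spirit of the known ASD2P proof, would produce a different functional each time. Passing to the slice-visiting formulation of Lemma~\ref{lem: CS for RWO} is precisely what circumvents this: it lets one gather all auxiliary $X$-functionals into a single family $\mathcal F$ (and likewise $\mathcal G$ on the $Y$ side) of cardinality still $<\kappa$, apply the hypothesis once on each factor, and then glue the two norm-attaining functionals multiplicatively via $\|f\otimes g\|_{\mathcal B(X\times Y)}=\|f\|\|g\|$. The only other point requiring care is that, since norming in the 1-ASD2P$_{<\kappa}$ means $\mu$-norming for \emph{every} $\mu<1$, the norming constants used to extract $x_\alpha$ from $B$ and $y_\alpha$ from $B'$ must be chosen close enough to $1$ in terms of each individual $\e_\alpha$.
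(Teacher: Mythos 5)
Your proof is correct and follows essentially the same route as the paper's: approximate each bilinear form by an elementary tensor, freeze the second coordinate and apply the hypothesis on $X$ to the induced family of functionals $\Psi_\alpha(\cdot\otimes v_\alpha)$ (still of cardinality $<\kappa$), then freeze the resulting $x_\alpha$'s and apply the hypothesis on $Y$, finally taking $f\otimes g$ as the common norm-attaining functional. The only cosmetic difference is that the paper runs the second stage through Lemma~\ref{lem: CS for RWO} applied to the relatively weakly open sets $\{y\in B_Y:\Psi_\alpha(x_\alpha\otimes y)>c_\alpha\}$, whereas you normalize the functionals $\Psi_\alpha(x_\alpha\otimes\cdot)$ and invoke the definition of the property directly, using the strictness of the lower bounds on their norms to pick admissible norming constants.
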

 \begin{proof}
 Let $(B_\eta)_\eta\subset S_{(X\pten Y)^*}$ be a family of cardinality $<\kappa$ consisting of bilinear forms and choose elements $u_{\eta,n}\otimes v_{\eta,n}\in S_X\otimes S_Y$ such that $B_\eta(u_{\eta,n},v_{\eta,n})>(1-1/n)^{1/2}$ for every $\eta$ and $n\in\mathbb N$. Since $X$ has the 1-ASD2P$_\kappa$, there are $x_{\eta,n}\subset S_X$ and $\xs \in S_{\Xs}$ such that $B_\eta(x_{\eta,n},v_{\eta,n})\ge(1-1/n)^{1/2}\|B_\eta(\cdot,v_{\eta,n})\|$ and $\xs(x_{\eta,n})=1$ for every $\eta$ and $n\in\mathbb N$. Consider the relatively weakly open sets 
	\[
	V_{\eta,n}:=\{y\in B_Y\colon B_\eta(x_{\eta,n}, y)>(1-1/n)^{1/2}\|B_\eta(\cdot,v_{\eta,n})\|\}.
	\]
Since $Y$ has the 1-ASD2P$_\kappa$, by Lemma~\ref{lem: CS for RWO}, there are $y_{\eta,n}\in V_{\eta,n}$ and $y^*\in S_{Y^*}$ such that $y^*(y_{\eta,n})=1$. Set $z_{\eta,n}:=x_{\eta,n}\otimes y_{\eta,n}$ and $z^*:=x^*\otimes y^*$. Clearly, $z^*(z_{\eta,n})=1$ and $\{z_{\eta,n}:\eta,n\}$ norms $(B_\eta)_\eta$ since
	\begin{align*}
	B_\eta(x_{\eta,n},y_{\eta,n})>(1-1/n)^{1/2}\|B_\eta(\cdot,v_{\eta,n})\|\ge1-1/n.
	\end{align*}
\end{proof}

\begin{remark}\label{rem: proj ten CS one component is not enough}
	Proposition~\ref{prop: CS in proj tensor product} remains no longer true in general if one assumes that only $X$ has the 1-ASD2P$_\kappa$. Indeed, by \cite[Corollary~3.9]{langemets_octahedral_2017}, the space $\ell_\infty\pten \ell^3_3$ fails the SD2P although we will later prove that $\ell_\infty$ has the 1-ASD2P$_{<2^{2^\omega}}$ (Corollary~\ref{cor: ell infty C[0,1] CS}).
\end{remark}

We now investigate sufficient conditions for the space of bounded linear operators to be $\kappa$-octahedral. As an application we get that the SD2P$_{\kappa}$ is stable under projective tensor products (Corollary~\ref{cor: SD2P_k in proj ten}) and that $\kappa$-octahedral norms behave well under injective tensor products (Corollary~\ref{cor: k_OH in inj ten}). 

We begin by introducing a weakening of $\kappa$-octahedrality which is inspired by \cite[Definition~2.1]{haller_rough_2017}.

\begin{definition}
Let $X$ be a normed space and $\kappa$ an infinite cardinal. We say that $X$ is \emph{alternatively $\kappa$-octahedral} if, whenever $A\in \mathcal P_\kappa(S_X)$ and $\eps>0$, there is a $y\in S_X$ such that
\[
\max\{\|x+y\|,\|x-y\| \}\geq2-\varepsilon\quad\text{ for all $x\in A$}.
\]
\end{definition}

Note that the alternative $\kappa$-octahedrality of $X$ is equivalent to the following conditions:
\begin{itemize}
\item[(i)]
whenever $A\in\mathcal P_\kappa(S_X)$ and $\lambda\in(0,1)$, there is $y\in S_X$ such that for every $x\in A$ there is $x^*\in S_{X^*}$ satisfying
\[
|x^*(x)|\ge\lambda
\quad\text{and}\quad
|x^*(y)|\ge\lambda;
\]
\item[(ii)] $\mathcal P_{<\kappa}(S_X)\subset\bigcap_{\lambda\in(0,1)}\bigcup_{x\in X}\mathcal P((\{x\}^\lambda\cup\{-x\}^\lambda)^\lambda\cup(-(\{x\}^\lambda\cup\{-x\}^\lambda)^\lambda)).$
\end{itemize}

Observe that $\kappa$-octahedral spaces are alternatively $\kappa$-octahedral, but in general the converse fails. In fact it is clear that $\ell_\infty(\kappa)$ is alternatively $\kappa$-octahedral, but it is not even octahedral \cite[Example~5.5]{ciaci_characterization_2021}.

\begin{proposition}\label{prop: kappa-OH operator spaces}

Let $X$ and $Y$ be Banach spaces, $H$ a closed subspace of $\lxy$ containing the finite rank operators, and $\kappa$ an infinite cardinal.
\begin{itemize}
\item[{\rm(a)}]
If  $\Xs$ is $(<\kappa)$-octahedral and $Y$ is alternatively $(<\kappa)$-octahedral, then  $H$ is $(<\kappa)$-octahedral.

\item[{\rm(b)}]
If $\Xs$ is alternatively $(<\kappa)$-octahedral and $Y$ is $(<\kappa)$-octahedral, then $H$ is $(<\kappa)$-octahedral.

\end{itemize}

\end{proposition}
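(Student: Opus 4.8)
The two parts are symmetric, so I would write the argument for (a) in full and indicate how the same scheme proves (b). The plan is to use the characterization of $(<\kappa)$-octahedrality given in Proposition~\ref{prop: characterization kappa OH and fail -1 BCP}: it suffices to show that for every $A \in \mathcal{P}_{<\kappa}(S_H)$ and every $\e > 0$ there is an operator $T \in S_H$ with $\|S + T\| \ge 2 - \e$ for all $S \in A$. Fix such an $A$ and $\e$. For each $S \in A$ choose a witness $x_S \in S_X$ with $\|S x_S\| > 1 - \e$, and then a norming functional $y^*_S \in S_{Y^*}$ with $y^*_S(S x_S) > 1 - \e$. Also pick $\xs_S \in S_{\Xs}$ with $\xs_S(x_S) > 1-\e$; the family $\{\xs_S : S \in A\}$ has cardinality $<\kappa$ in $S_{\Xs}$.

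Now I would exploit the two hypotheses in succession. Since $\Xs$ is $(<\kappa)$-octahedral, applying the criterion (a) of Proposition~\ref{prop: characterization kappa OH and fail -1 BCP} to the set $\{\xs_S : S\in A\}$ produces a single $\us \in S_{\Xs}$ with $\|\xs_S + \us\| \ge 2 - \e$ for all $S$; by Lemma~\ref{lem: properties of notation} there is then $\xss \in S_{\Xss}$ with $\xss(\xs_S) \ge 1-\e$ and $\xss(\us) \ge 1-\e$ for all $S$. Next, since $Y$ is alternatively $(<\kappa)$-octahedral, applying its defining property to the family $\{y^*_S \text{ evaluated appropriately}\}$ — more precisely to a suitable set of unit vectors of $Y$ extracted from the $S x_S$, or working in $Y$ directly — yields a single $v \in S_Y$ such that for each $S$ there is a sign $\theta_S \in \{-1,1\}$ and a functional realizing $|y^*_S(S x_S)|$ and $|y^*_S(v)|$ both close to $1$. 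The operator I would build is $T := \theta \cdot (\us \otimes v)$ for a globally chosen sign, i.e. the rank-one operator $z \mapsto \us(z)\, v$ scaled to norm one; since $H$ contains the finite rank operators, $T \in S_H$. To estimate $\|S + T\|$ I evaluate at $x_S$ (using that $\us$ nearly attains on $x_S$ via $\xss$, or directly pick a vector where $\us$ is close to $1$ and $S$ is still large) and pair with $y^*_S$: the diagonal term $y^*_S(S x_S)$ contributes $\approx 1$ and the term $y^*_S(T x_S) = \us(x_S)\, y^*_S(v) \approx \pm 1$, and the alternative octahedrality lets me choose signs so these add rather than cancel. This gives $\|S + T\| \ge 2 - c\e$, and rescaling $\e$ finishes the proof.

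The main obstacle I anticipate is the bookkeeping needed to make a \emph{single} operator $T$ work simultaneously for all $S \in A$ while the sign $\theta_S$ coming from the alternative octahedrality of $Y$ may depend on $S$. The resolution is that the alternative octahedral condition for $Y$ is applied once to the whole $<\kappa$-family of directions in $Y$, yielding one $v \in S_Y$; for each $S$ we only get to choose the sign in $\max\{\|\cdot + v\|, \|\cdot - v\|\}$, but since we are free to replace $T$ by $-T$ globally this is harmless — what matters is that for each $S$ \emph{one} of $y^*_S(v), -y^*_S(v)$ is close to $1$, and we then test $S+T$ or $S-T$ accordingly; both $S+T$ and $S-T$ have the same norm bound we want since $T \in S_H \iff -T \in S_H$, and the octahedral inequality $\|S+T\|\ge 2-\e$ is what we need for \emph{all} $S$, so we must actually be slightly more careful and instead use the genuine octahedrality on the $\Xs$-side to pin down $\us$ unambiguously, letting the $Y$-side absorb the sign ambiguity. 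For part (b) the roles swap: $\Xs$ only alternatively octahedral supplies $\us$ up to sign, while $Y$ genuinely octahedral supplies $v$ with $\|y^*_S + v\| \ge 2-\e$ unambiguously, and the vector $x_S$ at which we evaluate can be replaced by $-x_S$ to fix the sign on the $\Xs$-side; the computation is otherwise identical.
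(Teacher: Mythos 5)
The paper's own ``proof'' is only a pointer to \cite[Theorem~2.2]{haller_rough_2017}, so spelling the argument out is a worthwhile exercise, but your sketch has two genuine gaps. First, you apply the $(<\kappa)$-octahedrality of $\Xs$ to the functionals $\xs_S$ that norm the points $x_S$; this is the wrong family. What the construction needs is a single $\us\in S_{\Xs}$ together with, for each $S$, a point $z_S\in S_X$ at which \emph{both} $\|Sz_S\|$ and $|\us(z_S)|$ are close to $1$. From $\|\xs_S+\us\|\ge 2-\e$ you only get a point $z$ with $\xs_S(z)\approx 1$ and $\us(z)\approx 1$, and $\xs_S(z)\approx 1$ gives no lower bound whatsoever on $\|Sz\|$; nor is $\us(x_S)$ controlled, so evaluating $S+T$ at $x_S$ proves nothing (your parenthetical ``pick a vector where $\us$ is close to $1$ and $S$ is still large'' states the right goal, but your setup does not produce such a vector). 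The octahedrality of $\Xs$ must instead be applied to the normalized functionals $S^{*}\ys_S$, where $\ys_S\in S_{\Ys}$ norms $Sx_S$: a point $z_S$ almost attaining $\|S^{*}\ys_S+\us\|$ then satisfies $\ys_S(Sz_S)\approx 1$ (hence $\|Sz_S\|\approx 1$) and $\us(z_S)\approx 1$ simultaneously.

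Second, the sign problem you identify is real, but none of your proposed fixes resolves it. You cannot ``test $S+T$ or $S-T$ accordingly'': the sign supplied by alternative octahedrality depends on $S$, while $T$ must be one fixed operator, and replacing $T$ by $-T$ flips every $S$ at once; replacing $x_S$ by $-x_S$ changes nothing since $\|(S+T)(-x)\|=\|(S+T)(x)\|$. Moreover, in case (a) the $Y$-side cannot ``absorb'' the ambiguity, because $Y$ is only \emph{alternatively} octahedral there -- it is the source of the ambiguity, not its cure. The mechanism that actually closes case (a) is the two-sidedness of the genuine octahedrality of $\Xs$: from $\|S^{*}\ys_S+\lambda\us\|\ge(1-\e)(\|S^{*}\ys_S\|+|\lambda|)$ with $\lambda=\pm1$ one extracts \emph{two} points $z_S,w_S\in S_X$ with $\ys_S(Sz_S),\ys_S(Sw_S)\ge 1-\e'$ but $\us(z_S)\ge 1-\e'$ and $\us(w_S)\le-(1-\e')$. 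Since $\|Sz_S+Sw_S\|\ge\ys_S(Sz_S+Sw_S)\ge 2-2\e'$, applying the alternative octahedrality of $Y$ to the normalized sums $(Sz_S+Sw_S)/\|Sz_S+Sw_S\|$ yields $v$ and functionals $g_S$ for which $g_S(Sz_S)$ and $g_S(Sw_S)$ necessarily share a sign and are both close to $\pm 1$; one then evaluates $S+\us\otimes v$ at $z_S$ or at $w_S$ according to whether that common sign agrees with the sign of $g_S(v)$. In case (b) the fix is simpler and is not ``replace $x_S$ by $-x_S$'': the genuine octahedrality of $Y$ gives $\|Sz_S+\lambda v\|\ge(1-\e)(\|Sz_S\|+|\lambda|)$ for \emph{all} $\lambda\in\mathbb R$, so it handles $\lambda=\us(z_S)$ of either sign directly. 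Without these ingredients the estimate $\|S+T\|\ge 2-c\e$ does not follow from your construction.
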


\begin{proof}
The proof is an adaptation of the one given for octahedral spaces in \cite[Theorem~2.2]{haller_rough_2017}.
\end{proof}

Recall that $(X\pten Y)^\ast$ is also isometrically isomorphic to $\mathcal{L}(X,Y^\ast)$, therefore Proposition~\ref{prop: kappa-OH operator spaces} combined with Theorem~\ref{thm: sd2p iff oh} immediately gives the following.

\begin{corollary}\label{cor: SD2P_k in proj ten}
Let $X$ and $Y$ be Banach spaces and $\kappa$ an infinite cardinal. If $X$ and $Y$ have the SD2P$_{<\kappa}$, then $X\pten Y$ has the SD2P$_{<\kappa}$.
\end{corollary}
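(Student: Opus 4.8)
The plan is to deduce Corollary~\ref{cor: SD2P_k in proj ten} directly from the machinery already in place, with essentially no new computation. First I would recall the two duality facts we are allowed to use: by Theorem~\ref{thm: sd2p iff oh}(a), a Banach space $Z$ has the SD2P$_{<\kappa}$ if and only if $Z^\ast$ is $(<\kappa)$-octahedral; and $(X\pten Y)^\ast$ is isometrically isomorphic to $\mathcal{L}(X,Y^\ast)$. So the statement ``$X\pten Y$ has the SD2P$_{<\kappa}$'' is equivalent to ``$\mathcal{L}(X,Y^\ast)$ is $(<\kappa)$-octahedral''.

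Next I would apply Proposition~\ref{prop: kappa-OH operator spaces} with the closed subspace $H=\mathcal{L}(X,Y^\ast)$ itself (which of course contains the finite rank operators), in the form of part~(a). For this I need two inputs. The first is that $X^\ast$ is $(<\kappa)$-octahedral; this is exactly the dual reformulation, via Theorem~\ref{thm: sd2p iff oh}(a), of the hypothesis that $X$ has the SD2P$_{<\kappa}$. The second is that $Y^\ast$ is \emph{alternatively} $(<\kappa)$-octahedral; since $Y$ has the SD2P$_{<\kappa}$, Theorem~\ref{thm: sd2p iff oh}(a) gives that $Y^\ast$ is $(<\kappa)$-octahedral, and every $(<\kappa)$-octahedral space is alternatively $(<\kappa)$-octahedral (this implication is noted explicitly in the text just before Proposition~\ref{prop: kappa-OH operator spaces}). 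Feeding these into Proposition~\ref{prop: kappa-OH operator spaces}(a) with the pair $(X,Y^\ast)$ yields that $\mathcal{L}(X,Y^\ast)$ is $(<\kappa)$-octahedral, hence that $(X\pten Y)^\ast$ is $(<\kappa)$-octahedral, hence — again by Theorem~\ref{thm: sd2p iff oh}(a), now in the direction ``dual octahedral $\Rightarrow$ predual has SD2P$_{<\kappa}$'' — that $X\pten Y$ has the SD2P$_{<\kappa}$.

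Since this is advertised in the text as an ``immediate'' consequence, the write-up should be just a few lines: identify $(X\pten Y)^\ast$ with $\mathcal{L}(X,Y^\ast)$, invoke the dual characterization to turn both hypotheses into $(<\kappa)$-octahedrality of $X^\ast$ and $Y^\ast$, weaken the latter to alternative $(<\kappa)$-octahedrality, apply Proposition~\ref{prop: kappa-OH operator spaces}(a), and dualize back. There is no real obstacle here — the content has all been pushed into Theorem~\ref{thm: sd2p iff oh} and Proposition~\ref{prop: kappa-OH operator spaces}; the only thing to be careful about is getting the direction of the duality right (we use it both ways: hypothesis on $X,Y$ gives octahedrality of the duals, and octahedrality of $(X\pten Y)^\ast$ gives SD2P$_{<\kappa}$ of $X\pten Y$), and making sure the isometric identification $(X\pten Y)^\ast\cong\mathcal{L}(X,Y^\ast)$ is cited. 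One might add a one-sentence remark that the same argument, using part~(b) of Proposition~\ref{prop: kappa-OH operator spaces} and the identification with $\mathcal{L}(Y,X^\ast)$, works symmetrically, though it is not needed for the statement as given.
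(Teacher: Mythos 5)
Your proposal is correct and is essentially the paper's own argument: the paper likewise identifies $(X\pten Y)^\ast$ with $\mathcal{L}(X,Y^\ast)$ and obtains the corollary immediately from Proposition~\ref{prop: kappa-OH operator spaces} combined with Theorem~\ref{thm: sd2p iff oh}. Your write-up just makes explicit the intermediate steps (octahedrality of the duals, weakening to alternative octahedrality, and dualizing back), all of which match the intended reasoning.
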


Note that, by Remark~\ref{rem: proj ten CS one component is not enough}, it is clear that for the SD2P$_{<\kappa}$ of $X\pten Y$ it is not sufficient to assume the SD2P$_{<\kappa}$ only from one of the components $X$ or $Y$. 

\begin{proposition}\label{prop: sep OH in L(X*,Y)}
  Let $X$ and $Y$ be Banach spaces,
  $H$ a subspace of $\mathcal{L}(X^*,Y)$ containing $X \otimes Y$
  such that every $T \in H$ is weak$^*$-weak continuous, and $\kappa$ an infinite cardinal.
  If the norm of $X$ is alternatively $(<\kappa)$-octahedral and the norm of $Y$
  is $(<\kappa)$-octahedral, then the norm of $H$ is $(<\kappa)$-octahedral.
\end{proposition}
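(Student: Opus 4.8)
The plan is to verify condition~(a) of Proposition~\ref{prop: characterization kappa OH and fail -1 BCP} for $H$: given $A=\{T_\eta\}_\eta\in\mathcal P_{<\kappa}(S_H)$ and $\varepsilon>0$, we must produce $S\in S_H$ with $\|T_\eta+S\|\ge 2-\varepsilon$ for every $\eta$. The operator $S$ will be an elementary tensor $u\otimes v\in X\otimes Y\subset H$ with $u\in S_X$ and $v\in S_Y$ (so that $\|S\|=1$), and for each $\eta$ the estimate will be certified by the norm-$\le 1$ functional $\Phi_\eta:=g_\eta\otimes c_\eta^{*}$ on $H$, acting by $\Phi_\eta(T)=c_\eta^{*}(Tg_\eta)$, for suitable $g_\eta\in S_{X^{*}}$ and $c_\eta^{*}\in S_{Y^{*}}$. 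Fix throughout an auxiliary $\delta>0$, to be specified as a small constant depending on $\varepsilon$ at the end.

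First I would use the weak$^{*}$-weak continuity of the $T_\eta$: it guarantees that each adjoint $T_\eta^{*}$ maps $Y^{*}$ into $X$ (viewed inside $X^{**}$), with $x^{*}(T_\eta^{*}y^{*})=y^{*}(T_\eta x^{*})$ for all $x^{*}\in X^{*}$, $y^{*}\in Y^{*}$. Since $\|T_\eta\|=\|T_\eta^{*}\|=1$, choose $b_\eta^{*}\in S_{Y^{*}}$ with $\|T_\eta^{*}b_\eta^{*}\|_X>1-\delta$ and set $u_\eta:=T_\eta^{*}b_\eta^{*}\in X$, $\hat u_\eta:=u_\eta/\|u_\eta\|\in S_X$. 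Applying the alternative $(<\kappa)$-octahedrality of $X$ to $\{\hat u_\eta:\eta\}\in\mathcal P_{<\kappa}(S_X)$ yields $u\in S_X$ and signs $\epsilon_\eta\in\{-1,1\}$ with $\|\hat u_\eta+\epsilon_\eta u\|\ge 2-\delta$; by Lemma~\ref{lem: properties of notation} there are $g_\eta\in S_{X^{*}}$ with $g_\eta(\hat u_\eta)\ge 1-\delta$ and $\epsilon_\eta g_\eta(u)\ge 1-\delta$. By the transfer identity, $b_\eta^{*}(T_\eta g_\eta)=g_\eta(u_\eta)=\|u_\eta\|\,g_\eta(\hat u_\eta)\ge(1-\delta)^{2}$, so $w_\eta:=T_\eta g_\eta\in Y$ has $\|w_\eta\|\ge(1-\delta)^{2}$; write $\hat w_\eta:=w_\eta/\|w_\eta\|$.

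Next I would feed the $\hat w_\eta$ into the octahedrality of $Y$. Since $Y$ is $(<\kappa)$-octahedral, by Proposition~\ref{prop: characterization kappa OH and fail -1 BCP} it satisfies condition~(a) there; applying it to $\{\pm\hat w_\eta:\eta\}\in\mathcal P_{<\kappa}(S_Y)$ gives $v\in S_Y$ with $\|\hat w_\eta+\epsilon_\eta v\|\ge 2-\delta$ for every $\eta$ -- it is essential here that \emph{both} signs $\pm\hat w_\eta$ are allowed, so that the specific sign $\epsilon_\eta$ fixed above is admissible. By Lemma~\ref{lem: properties of notation} there are $c_\eta^{*}\in S_{Y^{*}}$ with $c_\eta^{*}(\hat w_\eta)\ge 1-\delta$ and $\epsilon_\eta c_\eta^{*}(v)\ge 1-\delta$. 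Now put $S:=u\otimes v\in S_H$ and $\Phi_\eta:=g_\eta\otimes c_\eta^{*}$, with $\|\Phi_\eta\|_{H^{*}}\le 1$. On the one hand $\Phi_\eta(T_\eta)=c_\eta^{*}(T_\eta g_\eta)=\|w_\eta\|\,c_\eta^{*}(\hat w_\eta)\ge(1-\delta)^{3}$; on the other hand, since $(u\otimes v)g_\eta=g_\eta(u)\,v$,
\[
\Phi_\eta(S)=g_\eta(u)\,c_\eta^{*}(v)=\bigl(\epsilon_\eta g_\eta(u)\bigr)\bigl(\epsilon_\eta c_\eta^{*}(v)\bigr)\ge(1-\delta)^{2},
\]
the two copies of $\epsilon_\eta$ cancelling. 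Hence $\|T_\eta+S\|\ge\Phi_\eta(T_\eta+S)\ge 2(1-\delta)^{3}$ for every $\eta$, and choosing $\delta$ small enough that $2(1-\delta)^{3}\ge 2-\varepsilon$ finishes the verification.

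The step I expect to be the main obstacle is the sign bookkeeping in the middle: the alternative octahedrality of $X$ only provides one sign $\epsilon_\eta$ per index, so the construction has to be arranged so that this very sign is available when the octahedrality of $Y$ is invoked -- which is why $Y$ must be genuinely $(<\kappa)$-octahedral and not merely alternatively so -- and so that $\Phi_\eta$ comes out positive on both $T_\eta$ and $S$, thanks to the cancellation $\epsilon_\eta^{2}=1$ in $\Phi_\eta(S)$. The other point not to be skipped is the use of weak$^{*}$-weak continuity, which is exactly what places $u_\eta=T_\eta^{*}b_\eta^{*}$ in $X$ (and not merely in $X^{**}$), so that the octahedrality of $X$ can be applied to it. Beyond these, the argument is a routine adaptation of the proof of Proposition~\ref{prop: kappa-OH operator spaces}.
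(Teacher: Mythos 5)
Your argument is correct and is precisely the adaptation of \cite[Proposition~3.1]{langemets_octahedral_2017} that the paper's one-line proof alludes to: use weak$^*$-weak continuity to land $T_\eta^*b_\eta^*$ in $X$, apply alternative octahedrality of $X$ to get $u$ and the signs $\epsilon_\eta$, apply (full) octahedrality of $Y$ to $\{\pm\hat w_\eta\}$ to get $v$ compatible with those signs, and test $T_\eta+u\otimes v$ against $g_\eta\otimes c_\eta^*$. The sign bookkeeping and the role of the two different octahedrality hypotheses are handled exactly as intended, so this is the same proof, just written out.
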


\begin{proof}
The proof is an adaptation of the one given for octahedral spaces in \cite[Proposition~3.1]{langemets_octahedral_2017}.
\end{proof}

Since $X \iten Y$ is a subspace of $L(X^*,Y)$ consisting of weak$^\ast$-weak continuous functions, then Proposition~\ref{prop: sep OH in L(X*,Y)} implies the following.

\begin{corollary}\label{cor: k_OH in inj ten}
Let $X$ and $Y$ be Banach spaces and $\kappa$ an infinite cardinal. If $X$ and $Y$ are $(<\kappa)$-octahedral, then  $X\iten Y$ is $(<\kappa)$-octahedral.
\end{corollary}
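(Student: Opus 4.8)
The plan is to apply Proposition~\ref{prop: sep OH in L(X*,Y)} directly, so the only real work is to package $X \iten Y$ as one of the operator spaces covered by that proposition. Recall that the injective tensor product $X \iten Y$ embeds isometrically into $\mathcal{L}(X^*, Y)$: an elementary tensor $x \otimes y$ acts on $x^* \in X^*$ by $x^*(x) \cdot y$, and this extends to an isometry on the completion because the injective norm of $u \in X \otimes Y$ is precisely $\sup\{\|(x^* \otimes \mathrm{id})(u)\| : x^* \in B_{X^*}\}$, i.e.\ the operator norm of the associated map $X^* \to Y$. So I would first record that $X \iten Y$, viewed inside $\mathcal{L}(X^*, Y)$, is a closed subspace containing the algebraic tensor product $X \otimes Y$.

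The second point to verify is that every operator in the image of $X \iten Y$ is weak$^*$-to-weak continuous. For an elementary tensor $x \otimes y$ this is immediate: the map $x^* \mapsto x^*(x)\, y$ is weak$^*$-to-norm continuous, hence weak$^*$-to-weak continuous. For a general element of $X \iten Y$, which is a norm limit of finite sums of elementary tensors, one checks that a norm limit of weak$^*$-to-weak continuous operators (in operator norm) is again weak$^*$-to-weak continuous; alternatively, one invokes the standard fact that $X \iten Y$ coincides with the space of approximable (hence compact) operators from $X^*$ to $Y$ that are weak$^*$-to-weak continuous. Either way, the subspace $H := X \iten Y \subset \mathcal{L}(X^*, Y)$ satisfies all the hypotheses of Proposition~\ref{prop: sep OH in L(X*,Y)}: it contains $X \otimes Y$ and consists of weak$^*$-to-weak continuous operators.

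With that identification in place, the corollary follows mechanically: assuming $X$ and $Y$ are $(<\kappa)$-octahedral, in particular $X$ is alternatively $(<\kappa)$-octahedral (since genuine $\kappa$-octahedrality implies its alternative version, as noted just before Proposition~\ref{prop: kappa-OH operator spaces}) and $Y$ is $(<\kappa)$-octahedral, so Proposition~\ref{prop: sep OH in L(X*,Y)} gives that $H = X \iten Y$ is $(<\kappa)$-octahedral. This is exactly the stated conclusion.

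I do not anticipate a serious obstacle here, since the heavy lifting is delegated to Proposition~\ref{prop: sep OH in L(X*,Y)}, whose proof is itself deferred to an adaptation of \cite[Proposition~3.1]{langemets_octahedral_2017}. The one place where a little care is needed is the weak$^*$-to-weak continuity claim for general elements of $X \iten Y$; the cleanest route is probably to note that such operators are compact (norm limits of finite-rank ones) and that finite-rank operators $X^* \to Y$ of the form $x^* \mapsto \sum_i x^*(x_i) y_i$ are visibly weak$^*$-to-weak continuous, and that uniform limits of such operators remain so. Given that this is a one-line corollary in the paper's own presentation, the write-up should simply cite the embedding $X \iten Y \hookrightarrow \mathcal{L}(X^*, Y)$, observe the weak$^*$-weak continuity, and apply the proposition.
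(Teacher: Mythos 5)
Your proposal is correct and follows exactly the paper's own route: identify $X\iten Y$ with a subspace of $\mathcal{L}(X^*,Y)$ containing $X\otimes Y$ and consisting of weak$^*$-weak continuous operators, then apply Proposition~\ref{prop: sep OH in L(X*,Y)} using the fact that $(<\kappa)$-octahedrality implies alternative $(<\kappa)$-octahedrality. The extra care you take in verifying the weak$^*$-weak continuity of norm limits is a detail the paper leaves implicit, but the argument is the same.
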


\section{Examples}\label{sec: examples}

\subsection{Elementary examples}
We begin by providing some simple examples of Banach spaces having the 1-ASD2P$_\kappa$.

\begin{lemma}\label{lem: sufficient condition for infty-CS}
    Let $X$ be a normed space and $\kappa$ an infinite cardinal. If for every set $A \in\mathcal P_\kappa(S_{\du X})$ there are $B \subset S_X$ and $y\in S_X$ such that $B$ norms $A$ and $y \pm B \subset S_X$, then $X$ has the 1-ASD2P$_\kappa$.
\end{lemma}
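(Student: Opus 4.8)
The plan is to verify condition (b'') of Proposition~\ref{prop: characterization CS and SD2P}, phrased directly via Definition~\ref{def: kappa-CS}: given $A\in\mathcal P_\kappa(S_{\du X})$, I must produce $B'\subset S_X$ and $\xs\in S_{\Xs}$ such that $B'$ norms $A$ and $\xs(x)=1$ for every $x\in B'$. By hypothesis I already have $B\subset S_X$ norming $A$ and a vector $y\in S_X$ with $y\pm B\subset S_X$. The natural candidate is to keep $B$ as the norming set and manufacture $\xs$ from $y$: the point is that for each $b\in B$ the three unit vectors $b$, $y+b$ and $y-b$ satisfy $\|(y+b)+(y-b)\|=\|2y\|=2$, so $y+b$ and $y-b$ are "norm-additive".

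First I would fix $b\in B$ and apply Lemma~\ref{lem: properties of notation} (the $n=2$, $\e=0$ case) to the pair $y+b,\,y-b\in S_X$, since $\|(y+b)+(y-b)\|=2$; this gives a functional $f_b\in S_{\Xs}$ with $f_b(y+b)=f_b(y-b)=1$, hence $f_b(y)=1$ and $f_b(b)=0$. That is not quite what I want (I need $\xs(b)=1$, not $0$), so instead I should work with the set $\{2y\}\cup\{y+b,\,y-b:b\in B\}$, or more cleanly: observe that $\{y+b:b\in B\}\cup\{y-b:b\in B\}$ is a family of unit vectors any two of which — say $y+b_1$ and $y-b_2$ — need not be norm-additive in general, so a global functional need not exist. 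The cleaner route is to renormalise the norming set: replace $B$ by $B':=\{\tfrac{y+b}{\|y+b\|}:b\in B\}\cup\{\tfrac{y-b}{\|y-b\|}:b\in B\}$, but here $\|y\pm b\|$ may fail to be controlled.

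So the approach I would actually commit to is the following. Keep $B$ as a norming set for $A$ but enlarge the target: for each $b\in B$ we have $\|(y+b)+(y-b)\|=2$, so by Proposition~\ref{prop: norm attaining on norm additive} applied to the two-element set $\{y+b,\,y-b\}$ there is $\xs_b\in S_{\Xs}$ attaining its norm at both; these all satisfy $\xs_b(y)=1$. If one can arrange a \emph{single} such functional $\xs$ working simultaneously for all $b\in B$ — which is exactly Proposition~\ref{prop: norm attaining on norm additive}(ii)$\Rightarrow$(i) applied to the set $\{y+b:b\in B\}\cup\{y-b:b\in B\}$, provided every finite subset is norm-additive — then $\xs(b)=\xs(y+b)-\xs(y)=1-1=0$ would again be wrong. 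The resolution: take $\xs$ attaining norm $1$ at $y$ and note $\xs\bigl((y+b)\bigr)=1$ forces, since $\|y+b\|\le 2$ and $y+b\in S_X$, that actually $\|y+b\|=1$ is false in general; rather $y+b\in S_X$ is given, so $\xs(y+b)\le 1$ with equality, giving $\xs(b)=0$. Hence the correct statement to prove is: $\{y+b:b\in B\}$ itself norms $A$ "up to" the contribution of $y$, and one checks $|y^\ast(y+b)|$ type estimates. The main obstacle is precisely this bookkeeping: extracting from "$B$ norms $A$ and $y\pm B\subset S_X$" a norming set on which a single functional equals $1$. I expect the intended argument is: every finite subset of $\{y+b:b\in B\}\cup\{y-b:b\in B\}$ is norm-additive (because these are unit vectors lying, pairwise, on a common segment through direction $y$ — any finite collection sums to $2ky$ for the $k$ "$+$" terms plus contributions cancelling), so Proposition~\ref{prop: norm attaining on norm additive} yields $\xs\in S_{\Xs}$ with $\xs(y+b)=\xs(y-b)=1$ for all $b$; then set $B':=\{y+b:b\in B\}\subset S_X$, note $\xs\equiv 1$ on $B'$, and finally check that $B'$ norms $A$: given $\as\in A$, pick $b\in B$ with $b(\as)\ge\lambda\|\as\|$ for all $\lambda<1$ — wait, $\as\in\du X$ acts on $X$, so $\as(b)$ is meant — and then $\as(y+b)=\as(y)+\as(b)$; splitting into the cases $\as(y)\ge 0$ and $\as(y)<0$ (using $y-b\in S_X$ in the second, and that $B$ norms $A$) gives $\max\{\as(y+b),\as(y-b)\}\ge\as(b)\ge\lambda\|\as\|$ after absorbing signs. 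Taking $B':=\{y+b:b\in B\}\cup\{y-b:b\in B\}$ makes both the norming property and $\xs\equiv 1$ hold, and Definition~\ref{def: kappa-CS} is satisfied, so $X$ has the 1-ASD2P$_\kappa$.
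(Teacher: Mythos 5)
Your overall architecture is the right one, and is essentially what the paper's one-line proof (``an adaptation of Example~3.3 of Abrahamsen--Lima'') intends: translate $B$ by $y$ and find a single functional equal to $1$ on the translated set. But as written your argument has two genuine gaps, both of which are bridged by the same pair of observations that you never make: every norming functional of $y$ annihilates $B$, and every $a^*\in A$ annihilates $y$. First, your justification of the norm-additivity of finite subsets of $\{y\pm b:b\in B\}$ --- ``any finite collection sums to $2ky$ \dots plus contributions cancelling'' --- is false: $\sum_{i=1}^n(y+\epsilon_ib_i)=ny+\sum_i\epsilon_ib_i$, and the $b_i$ are distinct elements of $B$, so nothing cancels. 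The correct route is to pick any $f\in S_{X^*}$ with $f(y)=1$; then $f(y\pm b)\le\|y\pm b\|=1$ forces $\pm f(b)\le0$, i.e.\ $f(b)=0$, hence $f(y+b)=1=\|y+b\|$ for every $b\in B$. This also makes the detour through Proposition~\ref{prop: norm attaining on norm additive} unnecessary: $x^*:=f$ already attains the value $1$ on all of $\{y+b:b\in B\}$.

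Second, your verification that the translated set norms $A$ breaks down exactly in the case $a^*(y)<0$ that you gesture at: there $\max\{a^*(y+b),a^*(y-b)\}=a^*(y)+|a^*(b)|<a^*(b)$, so ``absorbing signs'' does not yield the bound $\ge\lambda$. What saves the statement is that this case cannot occur. Since $B$ norms $A$, for every $\lambda\in(0,1)$ there is $b\in B$ with $a^*(b)\ge\lambda$; then $a^*(y)=a^*(y+b)-a^*(b)\le 1-\lambda$ and $a^*(y)=a^*(y-b)+a^*(b)\ge\lambda-1$, and letting $\lambda\to1$ gives $a^*(y)=0$. With that in hand, $a^*(y+b)=a^*(b)$, so already $B':=\{y+b:b\in B\}\subset S_X$ norms $A$, and together with the functional $f$ from the first paragraph Definition~\ref{def: kappa-CS} is verified. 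Without these two observations the proof as written does not go through.
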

\begin{proof}
    The proof is an adaptation of \cite[Example~3.3]{abrahamsen_relatively_2018}.
\end{proof}

Let $\eta<\kappa$ be infinite cardinals. In the following we denote by $\ell^\eta_\infty(\kappa)$ the elements in $\ell_\infty(\kappa)$ with support of size at most $\eta$.

\begin{example}\label{ex: c_0(kappa) has CS}
    Let $\omega\le\eta<\kappa$. If either $X=c_0(\kappa)$ or $\kappa$ is a regular cardinal and $X=\ell^\eta_\infty(\kappa)$, then $X$ satisfies the hypothesis of Lemma~\ref{lem: sufficient condition for infty-CS}, hence it has the 1-ASD2P$_{<\kappa}$. Indeed, for every set $A\in\mathcal P_{<\kappa}(S_X)$, we can find $\eta\in \kappa\setminus \bigcup_{x\in A} \text{supp}\{x\}$ such that $\|x\pm e_\eta\|=1$ for every $x\in A$.
\end{example}

\subsection{$C(K)$ and $L_1(\mu)$ spaces}

We first investigate when Banach spaces of the form $C(K)$, where $K$ is compact and Hausdorff, have the 1-ASD2P$_\kappa$. Recall that if $K$ is a compact Hausdorff space, then it is normal and $C(K)^*$ can be identified with the space of regular signed Borel measures of bounded variation \cite[Theorem~14.14]{aliprantis_infinite_2006}.

\begin{theorem}\label{thm: C(K) has CS}
    Let $K$ be a compact Hausdorff space. If $|K|\ge\omega_1$, then $C(K)$ has the 1-ASD2P$_{<|K|}$.
\end{theorem}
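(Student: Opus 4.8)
The goal is to verify the hypothesis of Lemma~\ref{lem: sufficient condition for infty-CS} for $X = C(K)$ with $\kappa = |K|$: given any family $A \in \mathcal{P}_{<|K|}(S_{C(K)^*})$ of regular signed Borel measures, I must produce $B \subset S_{C(K)}$ and $y \in S_{C(K)}$ such that $B$ norms $A$ and $y \pm B \subset S_{C(K)}$. The natural candidate for the witnessing function $y$ is the constant function $\mathbf{1}$, since then $\mathbf{1} \pm f \in S_{C(K)}$ holds precisely when $0 \le f \le \mathbf{1}$ on $K$ and $\sup f = 1$ — i.e.\ when $B$ consists of nonnegative norm-one functions. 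So the whole problem reduces to: \emph{for every $\mu \in S_{C(K)^*}$, find a nonnegative $f \in S_{C(K)}$ with $\int f \, d\mu \ge 1 - \varepsilon$}, uniformly enough that a single set $B$ works for all of $A$ — though since each $f$ only needs to norm its own $\mu$, I can simply take $B$ to be the union over $\mu \in A$ of such functions (one for each $\mu$ and each $\varepsilon$, or a sequence $\varepsilon_n \to 0$).

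\textbf{Key steps.} First, fix $\mu \in S_{C(K)^*}$ and $\varepsilon > 0$; write $\mu = \mu^+ - \mu^-$ via the Jordan decomposition, so $|\mu|(K) = \mu^+(K) + \mu^-(K) = 1$ and there exist disjoint Borel sets carrying $\mu^+$ and $\mu^-$. Since $\mu^+$ is a regular Borel measure, there is a compact set $P$ and an open set $U \supseteq P$ with $\mu^+(P) > \mu^+(K) - \varepsilon/2$ and $\mu^-(U) < \varepsilon/2$ (using regularity of $\mu^-$ from the outside and the fact that $\mu^+$ concentrates off the support of $\mu^-$); here is where the argument needs a bit of care with the regularity and the separation of the positive and negative parts. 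By Urysohn's lemma (valid since $K$ is normal) pick $f \in C(K)$ with $0 \le f \le 1$, $f \equiv 1$ on $P$, and $f \equiv 0$ off $U$. Then
$$
\int_K f \, d\mu = \int_K f \, d\mu^+ - \int_K f \, d\mu^- \ge \mu^+(P) - \mu^-(U) > \mu^+(K) - \varepsilon.
$$
If $\mu^+(K) \ge 1/2$ this already shows $\int f\,d\mu$ is close to $1$ — but in general $\mu^+(K)$ could be small (e.g.\ $\mu$ nearly $\le 0$). To fix this, apply the same construction to whichever of $\mu^+, \mu^-$ is larger; if it is $\mu^-$, use $-f$ in place of $f$, but then $-f$ is not nonnegative. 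The cleaner route: observe that $\|\mu\| = 1$ forces $\max\{\mu^+(K), \mu^-(K)\} \ge 1/2$, which is not enough; instead use that a \emph{norm-one} functional is norm-\emph{attaining-in-the-limit}, so genuinely $\sup_{\|g\|\le 1} \int g \, d\mu = 1$, and a compactness/regularity argument shows one can take $g$ with $\|g\| = 1$ and $\int g \, d\mu > 1 - \varepsilon$; the point then is to replace such a $g$ by a nonnegative one. Replacing $g$ by $g^+ = \max\{g,0\}$ does not obviously help. The right observation is this: split $\mu = \mu^+ - \mu^-$; if $\mu^-(K) > \varepsilon/2$, pick $f$ as above adapted to $\mu^+$ \emph{and} pick $h$ adapted to $\mu^-$, supported on a compact set $Q$ disjoint from $U$ — here I crucially use $|K| \ge \omega_1 > |A|$ is not yet needed, but the disjointness is automatic since $\mu^+ \perp \mu^-$. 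Then $\int f \, d\mu \ge \mu^+(K) - \varepsilon$ and $\int (f - h)\, d\mu \ge \mu^+(K) + \mu^-(K) - 2\varepsilon = 1 - 2\varepsilon$, but $f - h$ need not be nonnegative either. I will instead argue that WLOG $\mu \ge 0$: indeed every $\mu \in S_{C(K)^*}$ with $\int g\, d\mu$ close to $1$ for some $\|g\| \le 1$ has $\mu^+(K)$ close to $1$ — this is false in general, so the honest statement is that the constant-$\mathbf 1$ strategy only norms the \emph{positive} functionals, and one needs $|K|$ uncountable to have room.

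\textbf{The actual mechanism and the main obstacle.} The real content of the theorem (and where $|K| \ge \omega_1$ enters) must be: given $A \in \mathcal{P}_{<|K|}(S_{C(K)^*})$, the total variations $\bigcup_{\mu \in A} \operatorname{supp}|\mu|$ have a union of cardinality $< |K|$ in a suitable sense — more precisely, since there are fewer than $|K|$ measures and $K$ is large, one finds a point $t_0 \in K$ not in the closure of any relevant support, or more robustly, uses that for uncountably many points the Dirac-like behavior lets one build $f$ and $y$ simultaneously. I would mimic \cite[Theorem~2.4]{abrahamsen_relatively_2018}: for each $\mu$ choose $f_\mu \ge 0$ norm-one with $\int f_\mu \, d\mu$ close to $1$ (possible when $\mu$ has large positive part; for the general case one writes $\mu$ against $f_\mu$ built from \emph{both} parts by choosing $f_\mu$ to equal $1$ on a compact set where $\mu^+$ concentrates and also arranging, using that $|K|$ is uncountable, a point $t_0 \notin \bigcup_\mu(\text{supports})$ where all $f_\mu$ can be pushed to value $1$), set $y = f_{t_0}$ or $y = \mathbf{1}$, and take $B = \{f_\mu : \mu \in A\}$ after checking $y \pm f_\mu \in S_{C(K)}$, which needs $0 \le f_\mu \le 1$ and $f_\mu(t_0) = 0$ or a matching normalization. \textbf{The main obstacle} is precisely handling functionals whose negative variation is not negligible: the constant function $\mathbf 1$ alone cannot norm them while keeping $B$ nonnegative, so the proof must exploit $|K| > |A| \ge \omega$ to find a common "free" region of $K$ on which to normalize all the $f_\mu$ to $1$ simultaneously (making them norm-one and compatible with a single $y$ supported there), and verifying that such a region exists — a counting/regularity argument showing $\bigcup_{\mu\in A}\operatorname{supp}|\mu|$ cannot be all of $K$ when $K$ is uncountable and $|A| < |K|$ — is the step requiring the most care. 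Once that region is secured, Urysohn's lemma and the Jordan decomposition finish the construction routinely, and Lemma~\ref{lem: sufficient condition for infty-CS} delivers the $1$-ASD2P$_{<|K|}$.
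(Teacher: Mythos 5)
Your plan has two concrete defects that prevent it from closing, and the second one is fatal to the whole strategy. First, the reduction to Lemma~\ref{lem: sufficient condition for infty-CS} via $y=\mathbf 1$ does not work: if $0\le f\le 1$ and $\sup f=1$ then $\|\mathbf 1+f\|=2$, so $\mathbf 1+f\notin S_{C(K)}$; the condition $y\pm B\subset S_X$ forces every $f\in B$ to vanish wherever $|y|=1$, which is exactly the disjoint-support mechanism that works for $c_0(\kappa)$ but not here. Second, the counting argument you lean on --- that $\bigcup_{\mu\in A}\supp|\mu|$ cannot be all of $K$ when $|A|<|K|$ --- is simply false: a single regular measure can have full support (Lebesgue measure on $[0,1]$), so no ``free region'' of $K$ disjoint from all supports exists in general. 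You correctly identify the obstacle posed by functionals with non-negligible negative variation, but you never resolve it; the proposal ends by asserting that some such argument ``must'' exist rather than giving one.

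The paper's proof uses a different and sharper counting: each $\mu_\eta$, having bounded variation, has at most countably many \emph{atoms}, so since $|K|\ge\omega_1$ and the family has cardinality $<|K|$ there is a point $x\in K$ that is an atom of no $\mu_\eta$. No claim about supports is needed. Then, for each $\eta$ and $\varepsilon$, regularity of $\mu_\eta$ together with $|\mu_\eta|(\{x\})=0$ yields an open $U\ni x$ with $|\mu_\eta|(U)\le\varepsilon/3$; one takes an arbitrary $f\in S_{C(K)}$ with $\mu_\eta(f)\ge 1-\varepsilon/3$ and replaces it by $f_{\eta,\varepsilon}:=(1-F)+Ff$, where $F$ is a Urysohn function vanishing at $x$ and equal to $1$ off $U$. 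This is a pointwise convex combination of $1$ and $f(t)$, hence lies in $B_{C(K)}$, takes the value $1$ at $x$, and changes the integral against $\mu_\eta$ by at most $2|\mu_\eta|(U)$, so $\mu_\eta(f_{\eta,\varepsilon})\ge1-\varepsilon$. The common unit functional is then $\delta_x$, which attains its norm on every $f_{\eta,\varepsilon}$, and Definition~\ref{def: kappa-CS} is verified directly --- Lemma~\ref{lem: sufficient condition for infty-CS} is not used at all. Note in particular that there is no need to make the functions nonnegative or to control the negative part of $\mu_\eta$: the modification $f\mapsto(1-F)+Ff$ handles arbitrary signed measures at once.
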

\begin{proof}
  Fix a family $(\mu_\eta)_\eta\subset S_{C(K)^*}$ of cardinality $<|K|$. Since each measure $\mu_\eta$ has bounded variation, it has at most countably many atoms, hence we can find $x\in K\setminus(\bigcup_\eta\{\text{atoms of }\mu_\eta\})$. Fix $\eta$ and $\varepsilon>0$. Since $\mu_\eta(\{x\})=0$ and $\mu_\eta$ is regular, we can find an open set $U\subset K$ such that $x\in U$ and $|\mu_\eta(U)|\le\varepsilon/3$. There is $f\in S_{C(K)}$ such that $\mu_\eta(f)\ge1-\varepsilon/3$ and consider $F:K\rightarrow[0,1]$ the Urysohn's function such that $F(x)=0$ and $F=1$ in $K\setminus U$. Set $f_{\eta,\varepsilon}:=1+F(f-1)\in S_{C(K)}$. Notice that
  $$\mu_\eta(f_{\eta,\varepsilon})=\int_{K\setminus U}fd\mu_\eta+\int_Uf_{\eta,\varepsilon}d\mu_\eta\ge \mu_\eta(f)-2|\mu_\eta(U)|\ge1-\varepsilon.$$
  This means that $\{f_{\eta,\varepsilon}:\eta,\varepsilon\}$ norms $(\mu_\eta)_\eta$. In addition, $\delta_x(f_{\eta,\varepsilon})=1$ for every $\eta$ and $\varepsilon$.
\end{proof}

\begin{corollary}\label{cor: ell infty C[0,1] CS}
If $X$ is either $C[0,1]$ or $\ell_\infty$, then $X$ has the 1-ASD2P$_{<|X^*|}$.
\end{corollary}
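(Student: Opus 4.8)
The plan is to derive both cases directly from Theorem~\ref{thm: C(K) has CS} by exhibiting each space as a $C(K)$ space with an uncountable underlying compact Hausdorff set, and then identifying the relevant cardinal $|K|$ with $|X^*|$. First, for $X = C[0,1]$ I would take $K = [0,1]$, which is compact Hausdorff with $|K| = \mathfrak{c} = 2^\omega \geq \omega_1$, so Theorem~\ref{thm: C(K) has CS} gives that $C[0,1]$ has the 1-ASD2P$_{<|[0,1]|}$, i.e.\ the 1-ASD2P$_{<\mathfrak c}$. It then remains to check that $|C[0,1]^*| = \mathfrak c$: since $C[0,1]$ is a separable infinite-dimensional Banach space, its dual has cardinality $\mathfrak c$ (the dual of a separable Banach space embeds into $\real^{D}$ for a countable dense set $D$, giving $|X^*| \leq \mathfrak c$, and $\geq \mathfrak c$ is trivial from the norm-one functionals $\delta_t$, $t \in [0,1]$, being distinct). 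Hence $|K| = |C[0,1]^*|$ and the statement for $C[0,1]$ follows.

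Second, for $X = \ell_\infty$ I would use the classical identification $\ell_\infty = C(\beta\natu)$, where $\beta\natu$ is the Stone--\v Cech compactification of $\natu$; this is a compact Hausdorff space with $|\beta\natu| = 2^{\mathfrak c} = 2^{2^\omega} \geq \omega_1$. Theorem~\ref{thm: C(K) has CS} then yields that $\ell_\infty$ has the 1-ASD2P$_{<2^{2^\omega}}$. To finish, I need $|\ell_\infty^*| = 2^{2^\omega}$: the upper bound $|\ell_\infty^*| \leq |\ell_\infty|^{\aleph_0} = (2^{\aleph_0})^{\aleph_0} = 2^{\aleph_0} \cdot \dots$ is not quite right, so more carefully $|\ell_\infty^*| \leq |\real^{\ell_\infty}|$ restricted to continuous functionals; since $\ell_\infty$ has density character $\mathfrak c$, a functional is determined by its values on a dense set of size $\mathfrak c$, giving $|\ell_\infty^*| \leq \mathfrak c^{\mathfrak c} = 2^{\mathfrak c} = 2^{2^\omega}$, and the lower bound follows since $\ell_\infty^* \supset \ell_1(\mathfrak c)$-type families or simply because $\beta\natu \hookrightarrow B_{\ell_\infty^*}$ gives $2^{\mathfrak c}$ distinct functionals. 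Thus $|\beta\natu| = |\ell_\infty^*| = 2^{2^\omega}$ and the statement for $\ell_\infty$ follows, matching the cardinal announced in Remark~\ref{rem: proj ten CS one component is not enough}.

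The only genuinely non-routine point is the cardinality bookkeeping, i.e.\ confirming that in each case the cardinal $|K|$ appearing in Theorem~\ref{thm: C(K) has CS} really is $|X^*|$ rather than something smaller or larger; once that is pinned down, the result is immediate. I expect this to be a short proof consisting essentially of two applications of Theorem~\ref{thm: C(K) has CS} plus the two cardinality computations $|C[0,1]^*| = 2^\omega$ and $|\ell_\infty^*| = 2^{2^\omega}$.
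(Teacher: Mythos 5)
Your proposal is correct and follows essentially the same route as the paper: both cases are obtained by applying Theorem~\ref{thm: C(K) has CS} to $K=[0,1]$ and $K=\beta\natu$ respectively, and then verifying the cardinality identities $|C[0,1]^*|=2^\omega=|[0,1]|$ and $|\ell_\infty^*|=2^{2^\omega}=|\beta\natu|$ exactly as the paper does (upper bounds via $|\real|^{|D|}$ for a dense set $D$, lower bounds via the point evaluations $\delta_x$).
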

\begin{proof}
    Recall that $\ell_\infty=C(\beta\mathbb N)$, where $\beta\mathbb N$ is the Stone--\v{C}ech compactification of $\mathbb N$. As $\beta\mathbb N$ is the set of all ultrafilters on $\omega$, by \cite[Theorem~7.6]{Jech}, we have $|\beta\mathbb N|=2^{2^\omega}$. We claim that $|\ell_\infty^*|=2^{2^\omega}$, hence the conclusions follows from Theorem~\ref{thm: C(K) has CS}. In fact
    $$|\ell_\infty^*|\le|\mathbb{R}|^{|\ell_\infty|}=(2^\omega)^{2^\omega}=2^{\omega\cdot2^\omega}=2^{2^\omega},$$
    while the reverse inequality is obvious since $\delta_x\in\ell_\infty^*$ for every $x\in \beta\mathbb N$. The argument for $C[0,1]$ is similar since $|[0,1]|=2^\omega$ and $|C[0,1]^*|=2^\omega$. In fact, let $A\subset C[0,1]$ be some countable dense subset, then
    $$|C[0,1]^*|\le|\mathbb R|^{|A|}=(2^\omega)^\omega=2^{\omega\cdot\omega}=2^\omega,$$
    while the reverse inequality is trivial since $\delta_x\in C[0,1]^*$ for every $x\in[0,1]$.
\end{proof}

\begin{corollary}\label{cor: C(K) has CS}
    Let $K$ be compact Hausdorff space. The following are equivalent:
    \begin{itemize}
        \item[(i)] $|K|\ge\omega_1$;
        \item[(ii)] $C(K)$ has the SD2P$_\omega$;
        \item[(iii)] $C(K)$ has the 1-ASD2P$_\omega$;
        \item[(iv)] $C(K)^*$ is non-separable;
        \item[(v)] $C(K)^*$ is $\omega$-octahedral;
        \item[(vi)] $C(K)^*$ fails the $(-1)$-BCP$_\omega$.
    \end{itemize}
\end{corollary}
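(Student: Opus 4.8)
The plan is to prove a cycle of implications that exploits the machinery already assembled: the equivalences of Corollary~\ref{cor: C(K) has CS} should follow from Theorem~\ref{thm: C(K) has CS}, the general chain 1-ASD2P $\Rightarrow$ SD2P (via Proposition~\ref{prop: characterization CS and SD2P}), the duality results of Section~\ref{sec: duality}, and the classical fact that $C(K)^*$ is separable precisely when $K$ is countable. Concretely I would argue (i)$\Rightarrow$(iii)$\Rightarrow$(ii)$\Rightarrow$(v)$\Rightarrow$(iv)$\Rightarrow$(i), and separately (iii)$\Rightarrow$(vi)$\Rightarrow$(v) or fold (vi) into the same loop; let me describe the steps.

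First, (i)$\Rightarrow$(iii) is immediate from Theorem~\ref{thm: C(K) has CS}: if $|K|\ge\omega_1$ then $C(K)$ has the 1-ASD2P$_{<|K|}$, and since $|K|\ge\omega_1>\omega$ this yields in particular the 1-ASD2P$_\omega$. Next, (iii)$\Rightarrow$(ii): by Definition~\ref{def: kappa-CS} and Definition~\ref{def: kappa-SD2P} (taking $\lambda\to 1$), the 1-ASD2P$_\omega$ for the pair $(C(K),C(K)^*)$ trivially implies the SD2P$_\omega$, exactly as (b)$\Rightarrow$(a) in Proposition~\ref{prop: characterization CS and SD2P} but now at the level of the $\kappa=\omega$ definitions. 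For (ii)$\Rightarrow$(v): by Theorem~\ref{thm: sd2p iff oh}(a) applied with $\kappa=\omega_1$ (so that $<\kappa$ is $\omega$), $C(K)$ has the SD2P$_{\omega}$ if and only if $C(K)^*$ is $\omega$-octahedral. Then (v)$\Rightarrow$(iv): an $\omega$-octahedral space cannot be separable, since Definition~\ref{def: kappa-OH} with $Y$ taken to be the whole space (when the space is separable, $\mathrm{dens}(Y)\le\omega$) would force the existence of $x\in S_X$ with $\|y+\lambda x\|\ge(1-\varepsilon)(\|y\|+|\lambda|)$ for all $y$ in a dense set, which is impossible in an infinite-dimensional normed space by choosing $y$ close to $-x$; hence if $C(K)^*$ is $\omega$-octahedral it must be non-separable.

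Finally, (iv)$\Rightarrow$(i) is the classical fact that $C(K)^*$ (the space of regular signed Borel measures) is separable if and only if $K$ is metrizable and countable, so that $C(K)^*$ non-separable forces $|K|\ge\omega_1$ (for $K$ compact Hausdorff, metrizability is automatic once $C(K)$ is separable, and a countable compact metric space gives separable $C(K)^*$; conversely if $|K|\le\omega$ then $C(K)^*=\ell_1(K)$ is separable). It remains to incorporate (vi): the equivalence (v)$\iff$(vi) restricted to this setting follows from Proposition~\ref{prop: characterization kappa OH and fail -1 BCP}, but in general failing the $(-1)$-BCP$_\omega$ is strictly stronger than being $\omega$-octahedral. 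Here the cleanest route is (i)$\Rightarrow$(vi) directly: Theorem~\ref{thm: C(K) has CS} gives $C(K)$ the 1-ASD2P$_{<|K|}$, hence the 1-ASD2P$_\omega$, and Proposition~\ref{prop: X CSP and X sep OH} then gives that $C(K)^*$ fails the $(-1)$-BCP$_\omega$; and (vi)$\Rightarrow$(v) is the trivial implication from Definition~\ref{def: kappa-OH} (dropping the requirement that \eqref{eq: k-OH} also hold for $\varepsilon=0$ leaves the $\varepsilon>0$ condition, which is $\omega$-octahedrality). I expect the only genuine subtlety to be the classical measure-theoretic input (iv)$\Rightarrow$(i) — namely that non-separability of the dual of $C(K)$ forces $|K|\ge\omega_1$ for compact Hausdorff $K$ — which one cites rather than proves; everything else is a bookkeeping assembly of results already established in the paper.
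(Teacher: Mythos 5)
Your proof is correct, and for the core cycle (i)$\Rightarrow$(iii)$\Rightarrow$(ii)$\Leftrightarrow$(v)$\Rightarrow$(iv)$\Rightarrow$(i) it is essentially the paper's argument: Theorem~\ref{thm: C(K) has CS} for (i)$\Rightarrow$(iii), the trivial implication from the definitions for (iii)$\Rightarrow$(ii), Theorem~\ref{thm: sd2p iff oh}(a) with $\kappa=\omega_1$ for (ii)$\Leftrightarrow$(v), the observation that a separable space cannot be $\omega$-octahedral for (v)$\Rightarrow$(iv), and the fact that $C(K)^*=\overline{\operatorname{span}}\{\delta_x:x\in K\}$ is separable when $K$ is countable for (iv)$\Rightarrow$(i). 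The one genuine difference is how (vi) enters: the paper imports the equivalence (iv)$\Leftrightarrow$(vi) wholesale from \cite[Corollary~20]{guirao_remarks_2019} and then only needs the obvious (vi)$\Rightarrow$(v), whereas you obtain (iii)$\Rightarrow$(vi) from Proposition~\ref{prop: X CSP and X sep OH} (with $\kappa=\omega_1$, so that $<\kappa$ reads as $\omega$) and close the loop with the trivial (vi)$\Rightarrow$(v) from Definition~\ref{def: kappa-OH}. Your route has the advantage of being self-contained, relying only on results already proved in the paper; the price is that (iv)$\Rightarrow$(vi) is then obtained only by traversing the whole cycle rather than as a direct equivalence. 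All the bookkeeping you rely on (in particular that the $1$-ASD2P$_{<\omega_1}$ is the $1$-ASD2P$_\omega$, that the SD2P$_{<\omega_1}$ is the SD2P$_\omega$, and that the failure of the $(-1)$-BCP$_{<\omega_1}$ is the failure of the $(-1)$-BCP$_\omega$) checks out, so both proofs are valid.
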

\begin{proof}
  (iv)$\iff$(vi) is \cite[Corollary~20]{guirao_remarks_2019}, (ii)$\iff$(v) is Theorem~\ref{thm: sd2p iff oh}(a) and (i)$\implies$(iii) follows from Theorem~\ref{thm: C(K) has CS}. (iii)$\implies$(ii) and (vi)$\implies$(v)$\implies$(iv) are obvious. Notice that (iv)$\implies$(i) since if $|K|<\omega_1$, then $C(K)^*=\overline{\text{span}}\{\delta_x:x\in K\}$.
\end{proof}

\begin{corollary}\label{cor: L_infty has CS}
    $L_\infty[0,1]$ has the 1-ASD2P$_\omega$.
\end{corollary}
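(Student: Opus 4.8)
The plan is to deduce this from Theorem~\ref{thm: C(K) has CS} by realizing $L_\infty[0,1]$ as a $C(K)$ space. First I would recall the classical fact that $L_\infty[0,1]$, being a commutative unital $C^\ast$-algebra, is isometrically isomorphic to $C(K)$, where $K$ is its maximal ideal space, a compact Hausdorff space (the hyperstonean space of Lebesgue measure); for this I would supply a standard reference. Once this identification is in place it suffices to verify that $|K|\ge\omega_1$: then Theorem~\ref{thm: C(K) has CS} yields that $C(K)$, hence $L_\infty[0,1]$, has the 1-ASD2P$_{<|K|}$, and since $\omega<\omega_1\le|K|$, every subset of $S_{L_\infty[0,1]^*}$ of cardinality at most $\omega$ has cardinality $<|K|$, so by Definition~\ref{def: kappa-CS} the space $L_\infty[0,1]$ in particular has the 1-ASD2P$_\omega$.

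To check $|K|\ge\omega_1$, I would note that $L_\infty[0,1]$ is non-separable: for instance $\{\mathbf 1_{[0,t]} : t\in[0,1]\}$ is an uncountable subset of $S_{L_\infty[0,1]}$ that is $1$-separated, since $\|\mathbf 1_{[0,s]}-\mathbf 1_{[0,t]}\|_\infty=1$ whenever $s\ne t$. Consequently $C(K)$ is non-separable, so $K$ is not metrizable. On the other hand, every countable compact Hausdorff space is metrizable (it is normal, so one can separate its countably many pairs of distinct points by Urysohn functions and thereby embed it continuously, hence homeomorphically, into $[0,1]^\omega$). Therefore $K$ must be uncountable, i.e. $|K|\ge\omega_1$, as required.

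I do not expect a serious obstacle here; the two points deserving care are the (standard) Gelfand-type representation $L_\infty[0,1]\cong C(K)$ and the elementary passage from the 1-ASD2P$_{<|K|}$ to the 1-ASD2P$_\omega$, both of which are routine. One could alternatively avoid the representation and mimic the proof of Theorem~\ref{thm: C(K) has CS} directly inside $L_\infty[0,1]$, working with the Stone space of its measure algebra and with the fact that each functional in $L_\infty[0,1]^*$, viewed as a measure on that space, has only countably many atoms; but the reduction above is shorter and reuses what has already been established.
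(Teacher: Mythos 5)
Your proof is correct and follows essentially the same route as the paper: represent $L_\infty[0,1]$ isometrically as $C(K)$ for a compact Hausdorff $K$, observe that non-separability of $L_\infty[0,1]$ forces $K$ to be uncountable, and invoke Theorem~\ref{thm: C(K) has CS}. The only cosmetic difference is that the paper packages the last step through the equivalences of Corollary~\ref{cor: C(K) has CS} (non-separability of $C(K)^*$), whereas you verify $|K|\ge\omega_1$ directly via the metrizability of countable compact Hausdorff spaces; both are valid and of the same substance.
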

\begin{proof}
    There exists a compact Hausdorff space $K$ such that $L_\infty[0,1]$ is isometrically isomorphic to the space $C(K)$ \cite[Theorem~4.2.5]{Albiac2006b}. Since $L_\infty[0,1]$ is non-separable, then its dual must be non-separable, hence Corollary~\ref{cor: C(K) has CS} implies the claim.
\end{proof}

Let $(\Omega,\Sigma,\mu)$ be a measure space. We now proceed to investigate when $L_1(\mu)$ has the SD2P$_\omega$ or the 1-ASD2P$_\omega$.

\begin{theorem}\label{thm: L-embedded}
    Let $X$ be an L-embedded space and $\kappa$ an infinite cardinal. Let $P$ be one of the following properties: SD2P$_{<\kappa}$, 1-ASD2P$_{<\kappa}$ or ASD2P. Then $X$ satisfies $P$ if and only if $X^{**}$ satisfies the weak$^*$ $P$. 
\end{theorem}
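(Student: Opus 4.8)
The plan is to exploit the L-decomposition $X^{**} = X \oplus_1 X_s$, where $X$ sits inside $X^{**}$ as a norm-one complemented subspace whose complement $X_s$ is a norm-one ideal (this is the definition of L-embeddedness). The natural $L$-projection $\pi\colon X^{**}\to X$ and its complement $\pi_s = \mathrm{Id}-\pi$ satisfy $\|\xss\| = \|\pi(\xss)\| + \|\pi_s(\xss)\|$ for all $\xss\in X^{**}$. The key point, which I would isolate as the first step, is that this $L$-projection is weak$^*$-to-weak$^*$ continuous, or at least that restriction along the canonical embedding $X\hookrightarrow X^{**}$ sets up a good correspondence between functionals on $X^{**}$ (i.e.\ elements of $X^{***}$) and functionals on $X$ (elements of $X^*$). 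Concretely, $X^{***} = X^* \oplus_\infty (X_s)^\perp$, and restriction of $\xsss\in X^{***}$ to $X$ picks out its $X^*$-coordinate; moreover if $\|\xsss\|=1$ then the $X^*$-coordinate also has norm $1$ (this is where $L$-embeddedness, as opposed to a general complemented subspace, is used).

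For the ``only if'' direction — $X$ has $P$ implies $X^{**}$ has weak$^*$ $P$ — I would take a family $A\in\mathcal P_{<\kappa}(S_{X})$ of norm-one functionals on $X^{**}$ (so $A\subset S_{X^{***}}$ with the weak$^*$ topology, i.e.\ $A\subset S_{X^*}$ after restriction, since weak$^*$ slices of $B_{X^{**}}$ are defined by elements of $X$), restrict to get $\tilde A\subset S_{X^*}$, apply property $P$ for the pair $(X, X^*)$ to obtain $B\subset S_X$ and $x^*\in S_{X^*}$ (or $x^*$ with the appropriate attaining/$\lambda$-norming condition according to which of the three properties $P$ is), and then push everything up: $B\subset S_X\subset S_{X^{**}}$ already lives in $X^{**}$, and $x^*$ viewed in $X^{***}$ (as $x^*\oplus_\infty 0$) still has norm $1$ and still satisfies $x^*(b)=\|b\|$ or $x^*(b)\ge\lambda$ for $b\in B$; finally $B$ $\lambda$-norms $\tilde A$ in $X$, hence $B$ $\lambda$-norms $A$ as functionals on $X^{**}$ because $a(b) = \tilde a(b)$ for $b\in X$. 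This uses the characterizations in Proposition~\ref{prop: characterization CS and SD2P} / Definitions~\ref{def: kappa-SD2P} and~\ref{def: kappa-CS} together with Lemma~\ref{lem: SD2P=infinite SD2P} and Lemma~\ref{lem: characterization CS}, and needs no reflexivity since $B$ genuinely lies in $X\subset X^{**}$.

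For the ``if'' direction — $X^{**}$ has weak$^*$ $P$ implies $X$ has $P$ — the subtlety is that property $P$ for $X$ asks for a norming/attaining set $B$ inside $S_X$, whereas weak$^*$ $P$ for $X^{**}$ only hands us a set $B^{**}\subset S_{X^{**}}$. Here is where I expect the main obstacle, and where the $L$-structure does the work: given $A\in\mathcal P_{<\kappa}(S_{X^*})$, regard $A$ as a family of weak$^*$-continuous functionals on $X^{**}$ (legitimate, since $X^*\subset X^{***}$ defines weak$^*$ slices of $B_{X^{**}}$), apply weak$^*$ $P$ to get $B^{**}\subset S_{X^{**}}$ norming $A$ and $x^{***}\in S_{X^{***}}$ attaining (or $\lambda$-attaining) on $B^{**}$; write $x^{***} = x^*\oplus_\infty \varphi$ with $x^*\in B_{X^*}$, and for each $b^{**}\in B^{**}$ decompose $b^{**} = \pi(b^{**}) + \pi_s(b^{**})$. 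The contraction $\pi$ has the property that $\|\pi(b^{**})\|\le 1$ but — crucially — that any functional $a\in A\subset X^*$ satisfies $a(b^{**}) = a(\pi(b^{**}))$ when $a$ annihilates $X_s$... which it need not. So instead I would argue more carefully: the relevant fact is that for an L-embedded space, $X$ is weak$^*$-dense in $X^{**}$ and, given finitely many $a_1,\dots,a_n\in A$ and the attaining functional, one can use a weak$^*$-approximation (Goldstine plus the $L$-structure, exactly as in the proof of Lemma~\ref{lem: X SD2P iff X* OH}) to replace each $b^{**}$ by a genuine $b\in S_X$ that still nearly norms the finitely many $a_i$'s and on which $x^*$ is still nearly $1$; since properties $P$ are stated with a ``for every $\lambda<1$'' quantifier (or, for ASD2P, can be reduced to such finite approximations via Lemma~\ref{lem: characterization CS}), passing to the limit over finite subsets and $\lambda\uparrow 1$ recovers the exact statement — again mirroring the Goldstine argument already used in the excerpt. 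The cleanest writeup would treat SD2P$_{<\kappa}$ and 1-ASD2P$_{<\kappa}$ uniformly via the set-theoretic inclusions in those definitions, and note the ASD2P (finite) case follows a fortiori or by the same argument restricted to finite $A$; the one genuinely new ingredient beyond what is already in Section~\ref{sec: duality} is the identification $X^{***}=X^*\oplus_\infty(X_s)^\perp$ and the norm-preservation of restriction, which I would state as a short preliminary observation citing the standard theory of L-embedded spaces.
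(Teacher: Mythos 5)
Your ``only if'' direction is essentially right (it is the half the paper dismisses with ``clearly''), apart from a slip at the start: the weak$^*$ slices of $B_{X^{**}}$ are defined by elements of the predual $X^{*}$, not of $X$, so the weak$^*$ $P$ of $X^{**}$ concerns the pair $(X^{**},X^{*})$ with $X^{*}$ canonically embedded in $X^{***}$; once you start from $A\subset S_{X^{*}}$, the push-up of $B\subset S_X\subset S_{X^{**}}$ and $x^*\in S_{X^*}\subset S_{X^{***}}$ goes through exactly as you describe and needs no $L$-structure.

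The gap is in the ``if'' direction. You correctly isolate the real difficulty --- the canonical image of $a\in X^{*}$ in $X^{***}$ acts on $X^{**}=X\oplus_1 X_s$ by $(x,z)\mapsto a(x)+z(a)$ and does \emph{not} annihilate $X_s$ --- but the proposed repair does not close it. First, Goldstine only controls the values of weak$^*$-continuous functionals on the approximating elements of $B_X$: the attaining functional $x^{***}$ supplied by the weak$^*$ $P$ lives in $X^{***}$, and $x^{***}(b^{**})=1$ gives no lower bound on $x^{*}(\pi(b^{**}))$ relative to $\|\pi(b^{**})\|$, since the attainment may occur entirely along the $X_s$-coordinate; so the approximants in $S_X$ need not nearly attain anything, nor need the $X$-components of the norming elements norm $A$ at all (they can have arbitrarily small norm). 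Second, and more fundamentally, ``passing to the limit over finite subsets and $\lambda\uparrow1$'' produces a different set $B_\lambda$ and a different functional for each $\lambda$, which is precisely the SD2P and \emph{not} the 1-ASD2P or ASD2P; the whole point of the attaining properties is that this limiting procedure cannot recover them (Remark~\ref{rem: def ASD2P}(c): $c_0$ has the ASD2P and the SD2P but fails the 1-ASD2P$_{<\omega}$). The paper's actual argument avoids approximation altogether: it reads $X^{**}=X\oplus_1 Z$ as an $\ell_1$-sum whose relevant predual is $X^{*}\oplus_\infty\{0\}$ and invokes Proposition~\ref{prop: CS_infty 1-sum}; the descent from the sum to the summand there preserves \emph{exact} attainment because a norm-additive family in an $\ell_1$-sum has norm-additive components, and norm-additivity is equivalent to having a common attaining functional by Proposition~\ref{prop: norm attaining on norm additive}. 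Note that the identification of the canonical copy of $X^{*}$ inside $X^{***}$ with $X^{*}\oplus_\infty\{0\}=Z^{\perp}$ is exactly the point you flagged; the paper records it as ``clear,'' so if you want a complete write-up you should justify (or correctly circumvent) that identification rather than replace it with a Goldstine argument that cannot deliver exact attainment.
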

\begin{proof}
If $X$ satisfies $P$, then clearly $X^{**}$ has the corresponding weak$^*$ property.

Conversely, suppose now that $X$ is $L$-embedded, that is, $X^{**}=X\oplus_1Z$ for some subspace $Z\subset X^{**}$. It is clear that $X^*=X^*\oplus_\infty \{0\}$, therefore Proposition~\ref{prop: CS_infty 1-sum} implies that, if $(X\oplus_1Z,X^*\oplus_\infty\{0\})$ satisfies $P$, then $(X,X^*)$ satisfies $P$.
\end{proof}

Recall that $L_1(\mu)^*=L_\infty(\mu)$ if and only if $\mu$ is localizable.

\begin{corollary}\label{cor: L_1(mu) CS_omega}
    Let $(\Omega, \Sigma, \mu)$ be a localizable measure space. The following statements are equivalent:
    \begin{itemize}
    \item[(i)] $\mu$ is atomless;
    \item[(ii)] $L_1(\mu)$ has the SD2P;
    \item[(iii)] $L_1(\mu)$ has the 1-ASD2P$_\omega$;
    \item[(iv)] $B_{L_1(\mu)}$ has no strongly exposed points;
    \item[(v)] $L_1(\mu)$ has the Daugavet property;
    \item[(vi)] $L_\infty(\mu)$ is octahedral;
    \item[(vii)] $L_\infty(\mu)$ fails the $(-1)$-BCP$_\omega$;
    \item[(viii)] $L_\infty(\mu)$ has the Daugavet property.
    \end{itemize}
\end{corollary}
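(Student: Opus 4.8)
The plan is to prove the eight statements equivalent via the cyclic chain
\[
(i)\Rightarrow(viii)\Rightarrow(iii)\Rightarrow(vii)\Rightarrow(vi)\Rightarrow(i),
\]
supplemented by the side-implications $(iii)\Rightarrow(ii)\Rightarrow(iv)\Rightarrow(i)$ and $(i)\Rightarrow(v)\Rightarrow(iv)$, which bring $(ii)$, $(iv)$ and $(v)$ into the loop. All the cardinal bookkeeping reduces to identifying 1-ASD2P$_{<\omega_1}$ with 1-ASD2P$_\omega$, $(<\omega_1)$-octahedrality with $\omega$-octahedrality, and the $(-1)$-BCP$_{<\omega_1}$ with the $(-1)$-BCP$_\omega$, so the earlier results of the paper will be applied with $\kappa=\omega_1$.

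The only genuinely new implication is $(i)\Rightarrow(iii)$, and I would route it through L-embeddedness. For $(i)\Rightarrow(viii)$ and $(i)\Rightarrow(v)$ I would invoke the classical fact that an atomless measure makes both $L_\infty(\mu)$ and $L_1(\mu)$ Daugavet spaces; for $L_\infty(\mu)$ this follows by writing $L_\infty(\mu)=C(K)$ with $K$ the Stone space of the measure algebra, which has no isolated points precisely when $\mu$ is atomless, and recalling that $C(K)$ with $K$ perfect has the Daugavet property. For $(i)\Rightarrow(iii)$: since $\mu$ is localizable we have $L_1(\mu)^{\ast}=L_\infty(\mu)$, hence $L_1(\mu)^{\ast\ast}=L_\infty(\mu)^{\ast}$; by $(viii)$ and Proposition~\ref{prop: dual of Daugavet has w*-CS_infty} the bidual $L_1(\mu)^{\ast\ast}=L_\infty(\mu)^{\ast}$ has the weak$^{\ast}$ 1-ASD2P$_\omega$, and since $L_1(\mu)$ is L-embedded (its bidual splitting as an $\ell_1$-sum by the Yosida--Hewitt decomposition), Theorem~\ref{thm: L-embedded} transfers this property back to $L_1(\mu)$ itself. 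Then $(iii)\Rightarrow(vii)$ is Proposition~\ref{prop: X CSP and X sep OH} applied to $X=L_1(\mu)$; $(vii)\Rightarrow(vi)$ follows from the part of Proposition~\ref{prop: characterization kappa OH and fail -1 BCP} asserting that failure of the $(-1)$-BCP$_{<\kappa}$ implies $(<\kappa)$-octahedrality, together with the trivial fact that $\omega$-octahedrality implies octahedrality; and $(vi)\Rightarrow(i)$ is elementary, for if $\mu$ had an atom $A$ then $L_\infty(\mu)=\mathbb{R}\chi_A\oplus_\infty L_\infty(\mu|_{\Omega\setminus A})$, and a Banach space with a one-dimensional $\ell_\infty$-summand cannot be octahedral (test the octahedrality condition with the two vectors $(1,0)$ and $(-1,0)$).

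The side-implications are soft. $(iii)\Rightarrow(ii)$: the 1-ASD2P$_\omega$ obviously implies the SD2P$_\omega$ (compare Definitions~\ref{def: kappa-CS} and \ref{def: kappa-SD2P}), which in turn implies the SD2P. $(ii)\Rightarrow(iv)$ and $(v)\Rightarrow(iv)$: both the SD2P and the Daugavet property force every slice of $B_{L_1(\mu)}$ to have diameter $2$, so $B_{L_1(\mu)}$ has no strongly exposed point. $(iv)\Rightarrow(i)$: conversely, if $\mu$ has an atom $A$ then $\chi_A/\mu(A)$ is strongly exposed in $B_{L_1(\mu)}$ by $\chi_A\in S_{L_\infty(\mu)}$, since any $f$ in the slice $\{f\in B_{L_1(\mu)}:\int_A f\,d\mu>1-\varepsilon\}$ lies within $2\varepsilon$ of $\chi_A/\mu(A)$ in the $L_1$-norm.

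The main obstacle is $(i)\Rightarrow(iii)$. A direct verification that every convex series of slices of $B_{L_1(\mu)}$ meets the unit sphere would be delicate for a general atomless localizable $\mu$ (this is, in spirit, the content of \cite[Remark~IV.5]{Ghoussoub1987b} for $L_1[0,1]$), but once $L_1(\mu)$ is recognized as an L-embedded space the question reduces, by Theorem~\ref{thm: L-embedded}, to the weak$^{\ast}$ 1-ASD2P$_\omega$ of $L_\infty(\mu)^{\ast}$, which Proposition~\ref{prop: dual of Daugavet has w*-CS_infty} supplies from the Daugavet property of $L_\infty(\mu)$. Everything else is bookkeeping with $\omega_1$ together with standard facts about Daugavet spaces and about strongly exposed points of balls of $L_1$-spaces.
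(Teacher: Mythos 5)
Your proposal is correct, and the heart of it --- deducing (iii) from (viii) by recognizing $L_1(\mu)$ as an $L$-embedded space, applying Proposition~\ref{prop: dual of Daugavet has w*-CS_infty} to $L_\infty(\mu)=L_1(\mu)^*$ to get the weak$^*$ 1-ASD2P$_\omega$ of the bidual, and then descending via Theorem~\ref{thm: L-embedded} --- is exactly the paper's argument. Where you differ is in how the remaining equivalences are closed. The paper simply cites \cite{guerrero_daugavet_2006} for (i)$\iff$(iv)$\iff$(v) and \cite{werner_recent_2001} for (i)$\iff$(viii), and links (vi) into the cycle through (ii)$\iff$(vi), which is an instance of the duality Theorem~\ref{thm: sd2p iff oh}(a) with $\kappa=\omega$ (SD2P $\iff$ octahedrality of the dual). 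You instead close the loop with a direct (vi)$\Rightarrow$(i): an atom $A$ splits off a one-dimensional $\ell_\infty$-summand $\mathbb{R}\chi_A$ of $L_\infty(\mu)$, and testing octahedrality on $(1,0)$ and $(-1,0)$ kills it; and you supply elementary proofs of (iv)$\Rightarrow$(i) (the normalized indicator of an atom is strongly exposed by $\chi_A$) and of (i)$\Rightarrow$(viii) via the Stone space representation. Your route is more self-contained and avoids invoking the duality theorem for the (vi) node, at the cost of re-proving classical facts that the paper outsources; both the cardinal bookkeeping ($\kappa=\omega_1$ so that $<\!\omega_1$ means countable) and the logical coverage of all eight items check out.
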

\begin{proof}
    (i)$\iff$(iv)$\iff$(v) was proved in \cite{guerrero_daugavet_2006} and (i)$\iff$(viii) is known (see e.g. \cite{werner_recent_2001}). (viii)$\implies$(iii) follows from combining the fact that $L_1(\mu)$ is an $L$-embedded space \cite[IV Example~1.1]{Harmand1993c} together with Proposition~\ref{prop: dual of Daugavet has w*-CS_infty} and Theorem~\ref{thm: L-embedded}. (iii)$\implies$(ii) and (ii)$\implies$(iv) are obvious, (ii)$\iff$(vi) follows from Theorem~\ref{thm: sd2p iff oh} and (iii)$\implies$(vii) is shown by Proposition~\ref{prop: X CSP and X sep OH}. Eventually, (vii)$\implies$(vi) is obvious.
\end{proof}

\begin{remark}\label{rem: daugavet+separable is not CS}
The examples of separable Banach spaces that have the 1-ASD2P$_\omega$ shown so far all have the Daugavet property. Nevertheless, they still are separate properties. In fact, fix some separable Daugavet spaces $X$ and $Y$ that also have the 1-ASD2P$_\omega$ (for example, $X=Y=C[0,1]$) and let $N$ be an absolute normalized norm with the postive SD2P which differs from the $\ell_1$ and $\ell_\infty$-norm, then $X\oplus_NY$ has the 1-ASD2P$_\omega$ by Proposition~\ref{prop: positive SD2P}. On the other hand, $X\oplus_NY$ is  separable and cannot have the Daugavet property \cite[Corollary~5.4]{bilik_narrow_2005}.
\end{remark}

In \cite[Theorem~4.4]{abrahamsen_remarks_2013} it was proved that every Daugavet space has the SD2P. In the following we will show that even more is true.

\begin{proposition}\label{prop: Daugavet implies SD2P_omega}
  Let $X$ be a Banach space. If $X$ has the Daugavet property, then $X$ has the SD2P$_\omega$.
\end{proposition}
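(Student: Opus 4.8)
The plan is to use the dual characterization from Theorem~\ref{thm: sd2p iff oh}(a), which says that $X$ has the SD2P$_\omega$ if and only if $X^*$ is $\omega$-octahedral. So it suffices to show that if $X$ has the Daugavet property, then $X^*$ is $\omega$-octahedral, i.e., for every separable subspace $Y^* \subset X^*$ and every $\varepsilon > 0$ there is $\xs \in S_{X^*}$ such that $\|y^* + \lambda \xs\| \ge (1-\varepsilon)(\|y^*\| + |\lambda|)$ for all $y^* \in Y^*$ and $\lambda \in \mathbb{R}$. Recall that the Daugavet property of $X$ passes to $X^*$ (this is classical, see e.g. \cite{werner_recent_2001}), and that a Banach space with the Daugavet property is octahedral — indeed, more strongly, for every finite set $x_1^*, \dots, x_n^* \in S_{X^*}$, every $\varepsilon > 0$, and every slice $S$ of $B_{X^*}$, there is $\xs \in S$ with $\|x_i^* + \xs\| \ge 2 - \varepsilon$ for all $i$ (this "slice-localized" octahedrality is a standard consequence of the Daugavet property, which is what distinguishes it from plain octahedrality).

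First I would fix a separable subspace $Y^* \subset X^*$ and a decreasing null sequence $\varepsilon_n \downarrow 0$, and pick a countable dense subset $\{y_n^*\}_{n \ge 1}$ of $S_{Y^*}$. The key point is that the single element $\xs \in S_{X^*}$ we need must work \emph{simultaneously} for all $y_n^*$; a naive application of finite octahedrality would only handle finitely many of them. This is where the slice-localized version of the Daugavet property (or equivalently, the fact that the set of $\xs$ witnessing octahedrality for a finite collection is weak$^*$-dense, or at least "large" enough) comes in: I would build $\xs$ as a weak$^*$-limit, or more concretely, inductively choose a nested sequence of slices $S_1 \supset S_2 \supset \cdots$ of $B_{X^*}$, where $S_n$ is chosen so that every element of $S_n$ has norm close to $1$ and satisfies $\|y_k^* + x^*\| \ge 2 - \varepsilon_n$ for $k = 1, \dots, n$; such a slice exists by applying the Daugavet property relative to the previously constructed slice $S_{n-1}$. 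Then any $\xs$ in the intersection (after normalizing) lies in all the $S_n$ and hence satisfies $\|y_k^* + \xs\| \ge 2 - \varepsilon_n$ for all $k \le n$, so $\|y_k^* + \xs\| = 2$ for all $k$; by density and a routine homogeneity argument (scaling by $\lambda$, using $\|y^* + \lambda \xs\| \ge \|y^*\| + \lambda \|\xs\| - 2\|y^* - z^*\|$ for $z^*$ dense, as in the proof of Lemma~\ref{lem: properties of notation}) this upgrades to the octahedrality inequality on all of $Y^*$.

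Alternatively — and this may be the cleaner route — I would instead verify the 1-norming formulation directly. By the more explicit form of Definition~\ref{def: kappa-SD2P} with $\kappa = \omega$, it suffices to show: for every countable $A \subset S_X$ and every $\lambda \in (0,1)$ there are $B \subset S_X$ and $x^* \in S_{X^*}$ with $B$ $\lambda$-norming $A$ and $x^*(x) \ge \lambda$ for all $x \in B$. Here one exploits the geometric content of the Daugavet property in the \emph{predual}: given countably many slices of $B_X$ (one associated to each $x_n \in A$ via a norming functional), one builds, using the Daugavet property, points $z_n$ in those slices together with a single functional $x^*$ nearly attaining its norm at all of them. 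The inductive slice-shrinking argument in $B_X$ is exactly the argument used in \cite[Example~3.4]{abrahamsen_relatively_2018} (invoked in Proposition~\ref{prop: dual of Daugavet has w*-CS_infty} for the weak$^*$ version); here one runs the analogous construction directly, using that in a Daugavet space every slice of $B_X$ contains, for any finite set of unit vectors, a point almost summing additively with all of them.

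The main obstacle is the passage from finitely many to countably many vectors: finite octahedrality (condition (ii) in the definition of octahedral spaces recalled in the excerpt) does \emph{not} in general iterate to a countable family, so one genuinely needs the extra strength of the Daugavet property, namely that the witnesses can be found inside an arbitrary prescribed slice. Once that "slice-localized octahedrality" is available, the nested-slices/diagonal construction is routine, and the homogenization from a dense set to the whole separable subspace is the same standard estimate used earlier in the paper. I would expect the write-up to consist of: (1) recording the slice-localized octahedrality consequence of the Daugavet property (with a reference or one-line proof), (2) the inductive construction of the nested slices, (3) extraction of $\xs$ and verification on the dense set, and (4) the density/homogeneity upgrade.
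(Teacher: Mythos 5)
Your overall route coincides with the paper's: reduce via Theorem~\ref{thm: sd2p iff oh}(a) to showing that $X^*$ is $\omega$-octahedral. The paper disposes of that step by citing \cite[Lemma~5.1]{ciaci_characterization_2021} (the Daugavet property forces $X^*$ to fail the $(-1)$-BCP$_\omega$, hence to be $\omega$-octahedral), whereas you sketch a direct proof of it; the nested-slice construction you describe is indeed the right mechanism.

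Two of your auxiliary claims are, however, false as stated. First, the Daugavet property does \emph{not} pass to the dual in general: $C[0,1]$ has it, while $C[0,1]^*=M[0,1]$ is an $L_1$-space with atoms and fails it (the paper only records this dual implication for separable $L$-embedded spaces, citing \cite{zoca_daugavet_2018}). Second, and more seriously for your construction, the ``slice-localized octahedrality'' you invoke is false for arbitrary slices of $B_{X^*}$: for $X=C[0,1]$ the slice $\{\nu\in B_{M[0,1]}:\nu(\{0\})>1-\delta\}$ consists of measures within $2\delta$ of $\delta_0$, so the choice $x^*=-\delta_0$ defeats it. What the Daugavet property of $X$ actually yields is the \emph{weak$^*$}-slice version (slices determined by elements of $X$), so your nested sequence $S_1\supset S_2\supset\cdots$ must consist of weak$^*$ slices. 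This is not cosmetic: it is precisely the weak$^*$ compactness of $B_{X^*}$, together with the weak$^*$ lower semicontinuity of $w^*\mapsto\|y_k^*+w^*\|$, that makes the intersection of the closures nonempty and produces the single witness $x^*$. With that correction your first argument goes through and in fact gives the stronger $\varepsilon=0$ conclusion, which is exactly the cited lemma. Your ``alternative'' route should be dropped or heavily repaired: as phrased it drifts toward producing a $1$-norming functional, i.e.\ the 1-ASD2P$_\omega$, which is precisely the open question recorded in the paper; moreover $B_X$ carries no compactness with which to extract the single functional, so one would instead have to take a weak$^*$ cluster point in $B_{X^*}$ of the finite-stage norming functionals supplied by Lemma~\ref{lem: properties of notation}.
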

\begin{proof}
If $X$ has the Daugavet property, then, by \cite[Lemma~5.1]{ciaci_characterization_2021}, the dual $X^\ast$ fails $(-1)$-BCP$_\omega$. Thus, $X^*$ is $\omega$-octahedral and by Theorem~\ref{thm: sd2p iff oh} we conclude that $X$ has the SD2P$_\omega$.
\end{proof}

Note that the converse of Proposition~\ref{prop: Daugavet implies SD2P_omega} fails by considering either the space $\ell_\infty$ or Remark~\ref{rem: daugavet+separable is not CS}. However, it seems to be unknown whether the following strengthening is true in general.

\begin{ques}\label{ques: daugavet separable}
    If $X$ is a Banach space with the Daugavet property, then does $X$ have the ASD2P (or even the 1-ASD2P$_\omega$)?
\end{ques}

The answer to Question~\ref{ques: daugavet separable} is positive if either 
\begin{itemize}
    \item[(a)] $X$ is a separable $L$-embedded space. Indeed, if $X$ is separable, $L$-embedded, and has the Daugavet property, then $X^*$ also has the Daugavet property \cite[Theorem~3.4]{zoca_daugavet_2018}. Now, by Proposition~\ref{prop: dual of Daugavet has w*-CS_infty}, the bidual $X^{**}$ has the weak$^*$ 1-ASD2P$_\omega$, hence Theorem~\ref{thm: L-embedded} shows that $X$ has the 1-ASD2P$_\omega$.
    \item[(b)] $X=C(K)$ is separable for some $K$ compact Hausdorff. Since $X$ has the Daugavet property, then $K$ does not have any isolated points \cite[Example~(a)]{werner_recent_2001}. Now, by Corollary~\ref{cor: C(K) has CS}, $X$ has the 1-ASD2P$_\omega$ if and only if $K$ is uncountable. If, by contradiction, we would assume that $K$ was countable, then we would get that $K=\omega^\alpha+n$ for some countable ordinal $\alpha$ and $n\in\mathbb N$ by Sierpinski--Mazurkiewicz theorem, which clearly has isolated points. Therefore, $X$ must have the 1-ASD2P$_\omega$.
\end{itemize}

It is important to point out that there exists a normed space which has the Daugavet property and is strictly convex \cite[Theorem~5.2]{kadets_remarks_1996}, hence it fails the ASD2P \cite[Proposition~2.3]{lopez-perez_strong_2019}. It seems to be unknown whether there exists a strictly convex Banach space with the Daugavet property.

\subsection{Lebesgue--Bochner spaces $L_1(\mu;X)$}
In general it is not known when Lebesgue--Bochner spaces $L_1(\mu;X)$ are $L$-embedded (see \cite[IV.5]{Harmand1993c} for some partial results).
Therefore, more attention is needed to study under which conditions Lebesgue--Bochner spaces $L_1(\mu;X)$ have the SD2P$_\omega$ or the 1-ASD2P$_\omega$.

\begin{lemma}\label{lem: disjoin measurable sets}
Let $(\Omega, \Sigma, \mu)$ be a measure space such that $\mu$ is atomless. If $(B_i)_i\subset\Sigma$ is a sequence of non-negligible sets, then there exists a sequence $(E_i)_i\subset\Sigma$ consisting of non-negligible pairwise disjoint sets satisfying $E_i \subset B_i$ for every $i\in\mathbb N$.
\end{lemma}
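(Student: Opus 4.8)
The plan is to build the disjoint sets $E_i$ recursively, at each step splitting off a small non-negligible piece of $B_i$ that avoids everything chosen so far. The key enabling fact is that an atomless measure space admits, inside any non-negligible set, non-negligible subsets of arbitrarily small measure; more precisely, by the standard consequence of atomlessness (Sierpiński's theorem, or just the basic halving argument), for any $A \in \Sigma$ with $\mu(A) > 0$ and any $\delta > 0$ there is $A' \subset A$ with $0 < \mu(A') < \delta$ — and if $\mu(A) = \infty$ one first passes to a non-negligible subset of finite measure. I would isolate this as the one observation the argument rests on.

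First I would set $E_1$ to be a non-negligible subset of $B_1$ of finite measure, say $\mu(E_1) = c_1 \in (0,\infty)$. Then, proceeding by induction, suppose $E_1, \dots, E_{n-1}$ have been constructed, pairwise disjoint, with $E_j \subset B_j$ and $0 < \mu(E_j) < \infty$. Consider $B_n$: it is non-negligible, so it contains a non-negligible subset $C_n$ of finite measure. Now I would want to remove from $C_n$ its overlap with $E_1 \cup \dots \cup E_{n-1}$, but that overlap could in principle be all of $C_n$ (up to null sets). To handle this, instead split: if $\mu\big(C_n \setminus \bigcup_{j<n} E_j\big) > 0$, take $E_n$ to be a non-negligible subset of that difference. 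Otherwise $\mu(C_n) = \mu\big(C_n \cap \bigcup_{j<n} E_j\big)$, so $C_n$ is (mod null) covered by the earlier $E_j$'s; then there is some $j < n$ with $\mu(C_n \cap E_j) > 0$, and I would instead \emph{shrink} $E_j$: replace $E_j$ by $E_j \setminus F$ and set $E_n := F$ where $F \subset C_n \cap E_j$ is chosen with $0 < \mu(F) < \mu(E_j)$, using atomlessness, so that both $E_j \setminus F$ and $F$ remain non-negligible. This keeps all the containments $E_i \subset B_i$ intact (since $F \subset C_n \subset B_n$ and $E_j \setminus F \subset E_j \subset B_j$) and preserves pairwise disjointness.

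The main obstacle is precisely this case distinction: one cannot simply carve $E_n$ out of $B_n$ minus the previously used sets, because a later $B_n$ might be entirely swallowed (modulo null sets) by the earlier choices. The fix — retroactively thinning a previously chosen $E_j$ to liberate room — is clean but needs to be stated carefully so that the inductive invariants ("pairwise disjoint, non-negligible, $E_i \subset B_i$") are genuinely maintained after the modification; in particular one must check that shrinking $E_j$ does not disturb disjointness with the other $E_i$'s, which is immediate since $E_j \setminus F \subset E_j$. An alternative route that sidesteps the bookkeeping is to note that, by atomlessness and $\sigma$-finiteness on each $B_i$, one can fix once and for all a countable measurable partition of each $B_i$ into non-negligible pieces and then invoke a Hall-type / greedy matching argument to pick one distinct piece per index; but the direct recursion above is more elementary and self-contained, so that is the one I would write out.
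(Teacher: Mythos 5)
Your overall strategy (a greedy recursion carving a non-negligible piece of $B_n$ at stage $n$) is close in spirit to the paper's, but the ``retroactive thinning'' step hides a genuine gap. At stage $n$ you may replace some earlier $E_j$ by $E_j\setminus F$, and nothing in your construction prevents the same $E_j$ from being thinned at infinitely many later stages: take, say, $\Omega=[0,1]$ with Lebesgue measure and $B_i=[0,1]$ for all $i$; choosing $E_1=[0,1]$ at stage $1$ (a legitimate choice under your recipe) forces every subsequent stage into the ``otherwise'' branch. Your condition $0<\mu(F)<\mu(E_j)$ only guarantees positivity after each single modification; if $E_j$ is modified at stages $n_1<n_2<\cdots$, the final set is the decreasing intersection $\bigcap_k E_j^{(n_k)}$, whose measure equals $\lim_k\mu(E_j^{(n_k)})$ and can perfectly well be $0$ (remove half of what remains each time). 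So the invariant ``each $E_j$ is non-negligible'' holds at every finite stage but need not survive to the limit, and the limit is what the lemma requires. The fix is to make the amount removed from any fixed $E_j$ summably small relative to its measure at creation, e.g.\ $\mu(F)\leq 2^{-n}\mu(E_j)$ at stage $n$.

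The paper's proof avoids this issue by being proactive rather than retroactive: at the stage where $E_j$ is created it immediately reserves, inside each future $B_i$ with $i>j$, a non-negligible piece $C_i^j$ of measure at most $2^{-i}$ times the measure of the set being carved, and defines $E_j$ as that set minus the union of all the reserved pieces. The geometric bound keeps $E_j$ non-negligible, each later index inherits a non-negligible reserved piece disjoint from $E_j$, and no set is ever touched again after its creation. Your reduction to finite-measure subsets via atomlessness is correct (and makes explicit a step the paper leaves implicit), and your Hall-type alternative is only sketched, so it does not rescue the argument; as written the recursion does not prove the lemma.
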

\begin{proof}
    For every $i > 1$ fix a set $C_i^1 \subset B_i$ such that $0 < \mu(C_i^1)  \leq 2^{-i} \mu(B_1)$ (the existence of such $C_i^1$'s is ensured by \cite[Theorem~10.52]{aliprantis_infinite_2006}) and set $E_1 := B_1 \setminus (\bigcup_{i=2}^\infty C_i^1)$. Note that
    \[
\mu(E_1) \geq \mu(B_1) - (\sum_{i=2}^\infty 2^{-i})\mu(B_1) > 0.
\]
We now have $E_1 \subset B_1$ and $C_i^1 \subset B_i$, all non-negligible, such that $E_1$ is disjoint from $\bigcup_{i=2}^\infty C_i^1$. With an induction argument, we repeat the construction on $\{C_i^{j-1}\}_{i=j+1}^\infty$ to get $E_j \subset C_j^{j-1}$ disjoint from all $C_i^j \subset C_i^{j-1}$. Clearly, the sequence $(E_i)_i$ satisfies the claim.
\end{proof}

\begin{lemma}\label{lem: L_infty}
Let $X$ be a Banach space and $(\Omega, \Sigma, \mu)$ a measure space such that $\mu$ is atomless. If $(f_i)_i\subset S_{L_\infty(\mu;X)}$, then there exists a  sequence $(E_i)_i\subset \Sigma$ of non-negligible pairwise disjoint sets such that $(f_i \chi_{E_i})_i \subset S_{L_\infty(\mu;X)}$.
\end{lemma}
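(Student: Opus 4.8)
The plan is to build each $E_i$ as a countable disjoint union of fragments harvested from the super-level sets of $\|f_i(\cdot)\|_X$, and to use Lemma~\ref{lem: disjoin measurable sets} to make all these fragments (across all indices $i$) globally pairwise disjoint. The subtlety to bear in mind is that $\|f_i\|_{L_\infty(\mu;X)}=1$ only guarantees $\mu(\{\omega\colon\|f_i(\omega)\|_X>1-1/n\})>0$ for every $n$; the value $1$ need not be attained on a set of positive measure, so one cannot simply take $E_i=\{\omega\colon\|f_i(\omega)\|_X=1\}$. Instead $E_i$ must retain, for each $n$, a positive-measure piece of the level set $\{\omega\colon\|f_i(\omega)\|_X>1-1/n\}$, which is exactly what keeps the essential supremum on $E_i$ equal to $1$.

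Concretely, first I would set, for $i,n\in\mathbb N$, $B_i^n:=\{\omega\in\Omega\colon\|f_i(\omega)\|_X>1-1/n\}$. These belong to $\Sigma$ because $f_i$ is strongly measurable, so $\omega\mapsto\|f_i(\omega)\|_X$ is measurable; and $\mu(B_i^n)>0$ for every $i,n$, since otherwise the essential supremum of $\|f_i(\cdot)\|_X$ would be at most $1-1/n<1$, contradicting $f_i\in S_{L_\infty(\mu;X)}$. Fixing a bijection $\mathbb N\to\mathbb N\times\mathbb N$ and applying Lemma~\ref{lem: disjoin measurable sets} to the so-indexed family $(B_i^n)_{i,n}$ of non-negligible sets (here the atomlessness of $\mu$ is used), I obtain pairwise disjoint non-negligible sets $D_i^n\in\Sigma$ with $D_i^n\subset B_i^n$. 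Then I put $E_i:=\bigcup_{n=1}^{\infty}D_i^n$.

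It remains to verify the three requirements. The $E_i$ are pairwise disjoint because $D_i^n\cap D_j^m=\emptyset$ whenever $(i,n)\neq(j,m)$; each $E_i$ is non-negligible since $\mu(E_i)\geq\mu(D_i^1)>0$; and $\|f_i\chi_{E_i}\|_{L_\infty(\mu;X)}=1$ because, on the one hand, $\|f_i\chi_{E_i}\|_{L_\infty(\mu;X)}\leq\|f_i\|_{L_\infty(\mu;X)}=1$, while, on the other hand, for every $n$ we have $\mu(\{\omega\in E_i\colon\|f_i(\omega)\|_X>1-1/n\})\geq\mu(D_i^n)>0$, which forces $\|f_i\chi_{E_i}\|_{L_\infty(\mu;X)}\geq1-1/n$, and $n$ was arbitrary. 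Hence $(f_i\chi_{E_i})_i\subset S_{L_\infty(\mu;X)}$. There is no genuine obstacle beyond the observation made in the first paragraph: once one passes to the countable family of level sets $B_i^n$, the statement is an immediate consequence of Lemma~\ref{lem: disjoin measurable sets}, and the remaining checks (measurability of the $B_i^n$, the reindexing of $\mathbb N\times\mathbb N$, and the two inequalities for $\|f_i\chi_{E_i}\|_{L_\infty(\mu;X)}$) are routine.
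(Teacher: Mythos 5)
Your proof is correct and follows essentially the same route as the paper: the authors likewise take non-negligible pieces of the super-level sets $\{\|f_i(\cdot)\|>1-1/j\}$, make them pairwise disjoint via Lemma~\ref{lem: disjoin measurable sets}, and define $E_i$ as the union over $j$. Your write-up merely spells out the measurability of $\omega\mapsto\|f_i(\omega)\|_X$ and the two inequalities for the essential supremum, which the paper leaves implicit.
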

\begin{proof}
For all $i,j \in \bb N$ find non-negligible $B_{i,j}\in\Sigma$ such that $\|f_i(t)\|>1-1/j$ for almost all $t \in B_{i,j}$. By Lemma~\ref{lem: disjoin measurable sets}, we can assume that these sets are pairwise disjoint. Set $E_i := \bigcup_{j \in \bb N} B_{i,j}$.
\end{proof}

It is known that $L_1(\mu;X)^*=L_\infty(\mu;X^*)$ if either $\mu$ is decomposable and $X^*$ is separable \cite[p.~282]{Dinculeanu_vector} or $\mu$ is $\sigma$-finite and $X^*$ has the Radon--Nikod\'ym property with respect to $\mu$ \cite[p.~98]{Diestel_vector}. 

\begin{theorem}\label{thm: L_1(mu) has CS_infty}
  Let $(\Omega, \Sigma, \mu)$ be a measure space and $X$ a Banach space. Suppose that either $\mu$ is decomposable and $X^*$ is separable or $\mu$ is $\sigma$-finite and $X^*$ has the Radon--Nikod\'ym property with respect to $\mu$. 
	\begin{enumerate}
		\item[(a)]
	If $\mu$ is atomless, then $L_1(\mu;X)$ has the 1-ASD2P$_\omega$.
	  \item[(b)]
	Let $P$ denote any of the following properties: SD2P, ASD2P,\\ 1-ASD2P$_{<\omega}$, SD2P$_\omega$, or 1-ASD2P$_\omega$.
	If $\mu$ has atoms, then $L_1(\mu;X)$ satisfies P if and only $X$ satisfies $P$.
\end{enumerate}
\end{theorem}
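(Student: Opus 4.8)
The plan is to treat parts (a) and (b) separately, using in both cases the identification $L_1(\mu;X)^*=L_\infty(\mu;X^*)$ guaranteed by the hypotheses.

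For part (a), I would use the sufficient condition from Lemma~\ref{lem: sufficient condition for infty-CS} (or, equivalently, the characterization of the 1-ASD2P$_\omega$ via convex series of slices in Remark~\ref{rem: def ASD2P}(b)). Given a countable family $(g_i)_i\subset S_{L_\infty(\mu;X^*)}$, the goal is to produce a norming set $B\subset S_{L_1(\mu;X)}$ together with a single $h\in S_{L_1(\mu;X)}$ such that $h\pm B\subset S_{L_1(\mu;X)}$. Since $\mu$ is atomless, I would apply Lemma~\ref{lem: disjoin measurable sets} to carve out, for each $i$ (and each rational approximation level), a non-negligible set $A_i$ on which $\|g_i(t)\|_{X^*}$ is close to $1$; making these pairwise disjoint and ensuring their union is not all of $\Omega$. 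On each $A_i$ one chooses an $X$-valued simple function $f_i$ supported on $A_i$, normalized in $L_1$, with $\int g_i(f_i)$ close to $\|g_i\|$; this is the norming family $B$. For the witness $h$, use atomlessness again to pick a further non-negligible set $E$ disjoint from all the $A_i$ and let $h$ be the indicator of $E$ (times a fixed norm-one vector of $X$), scaled to have $L_1$-norm one; disjointness of supports gives $\|h\pm f\|_{L_1}=\|h\|_{L_1}+\|f\|_{L_1}=1$ for each $f\in B$, which is exactly the hypothesis of Lemma~\ref{lem: sufficient condition for infty-CS}. One should take care that the countably many sets $A_i$ and $E$ can be chosen simultaneously disjoint with positive measure, which is where atomlessness is essential and where Lemma~\ref{lem: disjoin measurable sets} does the real work.

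For part (b), suppose $\mu$ has an atom $A_0$. The "only if" direction is routine: the norm-one projection from $L_1(\mu;X)$ onto the copy of $X$ sitting on $A_0$, together with the fact that $L_1(\mu;X)\cong X\oplus_1 L_1(\mu\restriction_{\Omega\setminus A_0};X)$ (an $\ell_1$-decomposition), reduces $P$ for $L_1(\mu;X)$ to $P$ for $X$ via Proposition~\ref{prop: CS_infty 1-sum} (in the guise $L_1$ of a family), so if $L_1(\mu;X)$ has $P$ then so does one of the summands, and the argument can be iterated to isolate $X$. For the "if" direction, assuming $X$ has $P$: write $L_1(\mu;X)=X\oplus_1 L_1(\mu\restriction_{\Omega\setminus A_0};X)$ and invoke Proposition~\ref{prop: CS_infty 1-sum}, which says precisely that an $\ell_1$-sum has $P$ as soon as one summand does. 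This handles all five properties $P$ uniformly because Proposition~\ref{prop: CS_infty 1-sum} is stated for SD2P$_{<\kappa}$, 1-ASD2P$_{<\kappa}$ and ASD2P, and SD2P$_\omega$, 1-ASD2P$_\omega$, SD2P$_{<\omega}$, 1-ASD2P$_{<\omega}$, ASD2P are all instances (with $\kappa=\omega$ or $\kappa=\omega^+$).

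The main obstacle I anticipate is the bookkeeping in part (a): ensuring that one can simultaneously extract a countable family of pairwise disjoint non-negligible sets $(A_i)_i$, on each of which a prescribed $L_\infty(\mu;X^*)$-function is nearly norm-attaining, \emph{and} still leave room for the disjoint witness set $E$. The cleanest route is to feed into Lemma~\ref{lem: disjoin measurable sets} the doubly-indexed family of sets $\{t:\|g_i(t)\|>1-1/j\}$ together with one extra positive-measure set to become $E$, obtaining disjoint refinements all at once; the rest is the standard disjoint-support computation in $L_1$, which I would not spell out in detail. A secondary point to verify is that the chosen $f_i$ can indeed be taken simple (hence genuinely in $L_1(\mu;X)$ with the right integral against $g_i$), which follows from density of simple functions and regularity of the construction.
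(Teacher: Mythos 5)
Your overall architecture (disjointification via Lemma~\ref{lem: disjoin measurable sets}, norming functions supported on pairwise disjoint non-negligible sets, and an $\ell_1$-decomposition along the atoms) is the same as the paper's, but both halves break at the final step.

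In part (a), the closing lemma is the wrong one. Lemma~\ref{lem: sufficient condition for infty-CS} requires $h\pm B\subset S_{L_1(\mu;X)}$, i.e.\ $\|h\pm f\|_1=1$ for every $f\in B$; but if $h$ and $f$ have disjoint supports and both have $L_1$-norm one, then $\|h\pm f\|_1=\|h\|_1+\|f\|_1=2$, so your displayed identity ``$\|h\|+\|f\|=1$'' is false and the hypothesis of that lemma cannot be satisfied this way. That lemma is tailored to sup-norm-type spaces such as $c_0(\kappa)$ and $\ell^\eta_\infty(\kappa)$, where adjoining a disjointly supported unit vector does not increase the norm; in $L_1$ it does. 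The correct use of disjointness in $L_1$ is the opposite one: the norming functions $f_{i,j}$ themselves form a norm-additive family, $\bigl\|\sum_{k}f_{i_k,j_k}\bigr\|_1=\sum_{k}\|f_{i_k,j_k}\|_1$ for every finite subfamily, so Proposition~\ref{prop: norm attaining on norm additive} (equivalently, Lemma~\ref{lem: characterization CS}) yields a single norm-one functional equal to $1$ at each of them, which is exactly what Definition~\ref{def: kappa-CS} asks for. This is the paper's route (via Lemma~\ref{lem: L_infty}); no auxiliary witness $h$ is needed, and your construction of the $f_{i,j}$ is otherwise fine.

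In part (b), the ``if'' direction misquotes Proposition~\ref{prop: CS_infty 1-sum}: for $\ell_1$-sums the property $P$ passes to the sum if and only if \emph{every} summand has it --- ``one summand suffices'' is the statement for $\ell_\infty$-sums (Proposition~\ref{prop: infty sum for weak*CS_infty}). Hence writing $L_1(\mu;X)=X\oplus_1 L_1(\mu\restriction_{\Omega\setminus A_0};X)$ and assuming only that $X$ has $P$ does not give $P$ for the sum; you must also establish $P$ for the second summand, and your proposed iteration ``to isolate $X$'' need not terminate when there are infinitely many atoms. The paper avoids this by decomposing in one step as $L_1(\mu;X)=L_1(\nu;X)\oplus_1\ell_1(\{X:\eta\text{ an atom of }\mu\})$ with $\nu$ atomless, applying part (a) to $L_1(\nu;X)$ and the hypothesis on $X$ to each atom-copy, and only then invoking Proposition~\ref{prop: CS_infty 1-sum}. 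The ``only if'' direction of your argument is essentially correct, since that proposition does give $P$ for every summand, in particular for $X$.
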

\begin{proof}
$(a)$ Let $(f_i)_i\subset S_{L_\infty(\mu;X)}$. Thanks to Lemma~\ref{lem: L_infty}, there exists a pairwise disjoint sequence $(E_{i,j})_{i,j}\subset \Sigma$ consisting of non-negligible sets such that $(f_i \chi_{E_{i,j}})_{i,j} \subset S_{L_\infty(\mu;X)}$. For every $i,j\in \mathbb{N}$ find some $g_{i,j}\in S_{L_1(\mu;X)}$ such that
$$(f_i\chi_{E_{i,j}})(g_{i,j})\ge1-1/j.$$
It is clear that the set $\{g_{i,j}\chi_{E_{i,j}}:i,j\in\mathbb N\}$ satisfies condition (ii) of Proposition~\ref{prop: norm attaining on norm additive} and it norms the $f_i$'s since
$$f_i(g_{i,j}\chi_{E_{i,j}})=(f_i\chi_{E_{i,j}})(g_{i,j})\ge1-1/j.$$

$(b)$
If $\mu$ has atoms, then
$$L_1(\mu;X)=L_1(\nu;X)\oplus_1\ell_1(\{X:\eta\text{ is an atom of }\mu\}),$$
where $\nu$ is atomless, thus the claim follows from $(a)$ and 
Proposition~\ref{prop: CS_infty 1-sum}.
\end{proof}

\begin{remark}\label{rem: duality L_1 and L_infty}
    Notice that if $X^*$ is separable, then $X$ cannot have the SD2P$_\omega$ due to Theorem~\ref{thm: sd2p iff oh}. On the other hand, if $X^*$ has the Radon--Nikod\'ym property with respect to some $\sigma$- finite measure $\mu$, then $X$ can still have the 1-ASD2P$_\omega$. In fact $c_0(\omega_1)$ has the 1-ASD2P$_\omega$ thanks to Example~\ref{ex: c_0(kappa) has CS} and $\ell_1(\omega_1)$ has the Radon--Nikod\'ym property.
\end{remark}

%Let $(\Omega,\Sigma,\mu)$ be a measure space. 
It is known that if $\mu$ is localizable, then $\mu$ is purely atomic if and only if every finite convex combination of weak$^*$ slices in $B_{L_1(\mu)^*}$ is relatively weak$^*$ open \cite[Theorem~4.1]{lopez-perez_strong_2019} (as already noted, this condition implies the weak$^*$ ASD2P).

%Let $X$ be a Banach space and $(\Omega,\Sigma,\mu)$ a measure space (non necessarily localizable). 
We now investigate when the dual of the Lebesgue--Bochner space $L_1(\mu;X)$ has the weak$^*$ SD2P$_\omega$ or the weak$^*$ 1-ASD2P$_\omega$.

\begin{proposition}
  Let $(\Omega, \Sigma, \mu)$ be a measure space and $X$ a Banach space. Let $P$ denote one of the following properties: weak$^*$ SD2P, weak$^*$ ASD2P, weak$^*$ 1-ASD2P$_{<\omega}$, weak$^*$ SD2P$_\omega$ or weak$^*$ 1-ASD2P$_\omega$. Then $L_1(\mu;X)^*$ satisfies $P$ if and only if $\mu$ is not purely atomic with finitely many atoms or $X^*$ satisfies $P$.
\end{proposition}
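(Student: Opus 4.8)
The plan is to split $L_1(\mu;X)^*$ into its two ``dual factors'' by means of the $\ell_\infty$-sum stability of Proposition~\ref{prop: infty sum for weak*CS_infty} and to treat each factor separately, the non-atomic one being the only real difficulty. Writing, as in the proof of Theorem~\ref{thm: L_1(mu) has CS_infty}(b),
$$L_1(\mu;X)=L_1(\nu;X)\oplus_1\ell_1(I,X),$$
with $\nu$ the non-atomic part of $\mu$ and $I$ the set of its atoms, one has dually $L_1(\mu;X)^*=L_1(\nu;X)^*\oplus_\infty\ell_1(I,X)^*$ and $\ell_1(I,X)^*=\ell_\infty(I,X^*)$. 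Since the weak$^*$ $P$ of a dual $Z^*$ is just the property $P$ of the couple $(Z^*,Z)$, Proposition~\ref{prop: infty sum for weak*CS_infty} shows that $L_1(\mu;X)^*$ has the weak$^*$ $P$ if and only if $L_1(\nu;X)^*$ has it or $\ell_1(I,X)^*$ has it. As the weak$^*$ 1-ASD2P$_\omega$ is the strongest of the five properties in the list (1-ASD2P gets weaker as the cardinal decreases; 1-ASD2P$_\kappa$ implies SD2P$_\kappa$ and SD2P$_\omega$ implies SD2P; and 1-ASD2P$_{<\omega}$ implies ASD2P by Definition~\ref{def: kappa-CS} and Proposition~\ref{prop: characterization CS and SD2P}), it suffices to establish: (A) if $I$ is infinite, $\ell_1(I,X)^*$ has the weak$^*$ 1-ASD2P$_\omega$; (B) if $I$ is finite, $\ell_1(I,X)^*=\ell_\infty^{|I|}(X^*)$ has the weak$^*$ $P$ if and only if $X^*$ does; (C) if $\nu\neq 0$, $L_1(\nu;X)^*$ has the weak$^*$ 1-ASD2P$_\omega$.

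Item (A) is exactly Proposition~\ref{prop: ell_infty has w*-CS} (which gives even the weak$^*$ 1-ASD2P$_{|\ell_1(I,X)|}$), and item (B) follows from $|I|-1$ applications of Proposition~\ref{prop: infty sum for weak*CS_infty} to $X^*\oplus_\infty\cdots\oplus_\infty X^*$. Item (C) is the core of the argument.

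I would prove (C) by a direct construction in the spirit of Proposition~\ref{prop: ell_infty has w*-CS} and Theorem~\ref{thm: L_1(mu) has CS_infty}(a). Given a countable family $(g_i)_i\subset S_{L_1(\nu;X)}$, fix $x_0^*\in S_{X^*}$, enumerate the pairs $(i,j)$ as $n\in\mathbb N$, and build a decreasing sequence $(\Delta_n)_n$ of measurable sets of positive $\nu$-measure with $\int_{\Delta_n}\|g_{i_n}\|\,d\nu<1/j_n$ --- possible because the integral of an $L^1$-function is absolutely continuous and, $\nu$ being non-atomic, a set of positive measure has subsets of arbitrarily small positive measure. Set $F_n:=x_0^*$ on $\Delta_n$ and let $F_n$ nearly norm $g_{i_n}$ pointwise on the complement of $\Delta_n$, taking $F_n$ countably-valued there (which is legitimate by Bochner measurability of $g_{i_n}$ and avoids any measurable-selection theorem; here one also uses that $\{g_{i_n}\neq 0\}$ is $\sigma$-finite, so a possibly non-$\sigma$-finite $\nu$ does not interfere). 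Then $F_n\in S_{L_1(\nu;X)^*}$, the family $\{F_n:n\in\mathbb N\}$ meets the weak$^*$ slice $\{F\in B_{L_1(\nu;X)^*}:F(g_i)\ge1-\delta\}$ for every $i$ and $\delta>0$, and any finite subfamily agrees with $x_0^*$ on the smallest of the $\Delta_n$ occurring in it --- a set of positive measure --- hence is norm-additive; Proposition~\ref{prop: norm attaining on norm additive} then supplies the common norm-attainer in $S_{L_1(\nu;X)^{**}}$. (Alternatively, since $L_1(\nu;X)$ has the Daugavet property for non-atomic $\nu$, one may simply invoke Proposition~\ref{prop: dual of Daugavet has w*-CS_infty}.)

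Assembling the cases: if $\mu$ is not purely atomic with finitely many atoms, then $\nu\neq 0$ or $I$ is infinite, so by (C) or (A) one of the two dual factors has the weak$^*$ 1-ASD2P$_\omega$, and hence $L_1(\mu;X)^*$ has the weak$^*$ $P$; in the remaining case $\nu=0$ and $L_1(\mu;X)^*=\ell_1(I,X)^*=\ell_\infty^{|I|}(X^*)$, so by (B) it has the weak$^*$ $P$ precisely when $X^*$ does. The main obstacle is item (C), and within it the one genuinely new ingredient --- beyond the techniques already present in Proposition~\ref{prop: ell_infty has w*-CS} and Theorem~\ref{thm: L_1(mu) has CS_infty} --- is the construction of the nested ``agreement sets'' $\Delta_n$ which are simultaneously of positive $\nu$-measure and of negligible $\|g_{i_n}\|$-mass; this is exactly where non-atomicity comes into play.
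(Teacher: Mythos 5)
Your proof is correct and follows essentially the same route as the paper: the same split of $L_1(\mu;X)$ into its atomless and purely atomic parts, Proposition~\ref{prop: infty sum for weak*CS_infty} to handle the resulting $\ell_\infty$-sum of dual factors (and the finitely-many-atoms case), Proposition~\ref{prop: ell_infty has w*-CS} for infinitely many atoms, and---in the parenthetical alternative you give for item (C)---the Daugavet property of $L_1(\nu;X)$ for atomless $\nu$ combined with Proposition~\ref{prop: dual of Daugavet has w*-CS_infty}, which is exactly what the paper invokes. Your primary hands-on construction of the functionals $F_n$ via the nested positive-measure sets $\Delta_n$, finished off with Proposition~\ref{prop: norm attaining on norm additive}, is a sound self-contained substitute for that last citation, but it is not needed.
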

\begin{proof}
  If $\mu$ is atomless, then $L_1(\mu;X)$ has the Daugavet property \cite[page~81]{werner_recent_2001}.  Therefore, Proposition~\ref{prop: dual of Daugavet has w*-CS_infty} implies that $L_1(\mu;X)^*$ has the weak$^*$ 1-ASD2P$_\omega$. If $\mu$ is purely atomic and has infinitely many atoms, then Proposition~\ref{prop: ell_infty has w*-CS} implies that $L_1(\mu;X)^*$ has the weak$^*$ 1-ASD2P$_\omega$. If $\mu$ has atoms but is not purely atomic, then $L_1(\mu;X)=L_1(\mu_1;X)\oplus_1 L_1(\mu_2;X)$, where $\mu_1$ is atomless, therefore, thanks to Proposition~\ref{prop: infty sum for weak*CS_infty}, we conclude that $L_1(\mu;X)^*$ has the weak$^*$ 1-ASD2P$_\omega$.
  
  If $\mu$ is purely atomic and has finitely many atoms, then 
  $$
  L_1(\mu;X)=X\oplus_1\ldots\oplus_1 X,
  $$ therefore Proposition~\ref{prop: infty sum for weak*CS_infty} implies that $L_1(\mu;X)^*$ satisfies $P$ if and only if $X^*$ satisfies $P$.
\end{proof}

\subsection{Counterexamples}
Let us finish with some more counterexamples in order to observe the following implication diagram.

\begin{figure}[H]
\centering
\begin{tikzpicture}
\path
(0,0) node (1) {weak$^*$ SD2P$_{<\kappa}$}
(0,1.5) node (2) {weak$^*$ 1-ASD2P$_{<\kappa}$}
(2,0.4) node [scale=0.68]{Ex.~\ref{ex: C[0,1] weak* CS}}
(2,1.9) node [scale=0.68]{Ex.~\ref{ex: C[0,1] weak* CS}}
(3.5,0) node (3) {SD2P$_{<\kappa}$}
(3.5,1.5) node (4) {1-ASD2P$_{<\kappa}$}
(4.3,0.75) node [scale=0.68]{Ex.~\ref{ex: SD2P not= CS}}
(5.3,0.4) node [scale=0.68]{Ex.~\ref{ex: c_0 fails infty CS}}
(5.3,1.9) node [scale=0.68]{Ex.~\ref{ex: c_0 fails infty CS}}
(7,0) node (5) {SD2P$_\kappa$}
(7,1.5) node (6) {1-ASD2P$_\kappa$};
\draw[-implies,double equal sign distance]
([xshift=-1mm]4.south)--([xshift=-1mm]3.north);
\draw[nimplies,implies-,double equal sign distance]
([xshift=1mm]4.south)--([xshift=1mm]3.north);
\draw[-implies,double equal sign distance]
(2.south)--(1.north);
\draw[-implies,double equal sign distance]
([yshift=-1mm]6.west)--([yshift=-1mm]4.east);
\draw[nimplies,implies-,double equal sign distance]
([yshift=1mm]6.west)--([yshift=1mm]4.east);
\draw[-implies,double equal sign distance]
([yshift=-1mm]5.west)--([yshift=-1mm]3.east);
\draw[nimplies,implies-,double equal sign distance]
([yshift=1mm]5.west)--([yshift=1mm]3.east);
\draw[-implies,double equal sign distance]
([yshift=-1mm]4.west)--([yshift=-1mm]2.east);
\draw[nimplies,implies-,double equal sign distance]
([yshift=1mm]4.west)--([yshift=1mm]2.east);
\draw[-implies,double equal sign distance]
([yshift=-1mm]3.west)--([yshift=-1mm]1.east);
\draw[nimplies,implies-,double equal sign distance]
([yshift=1mm]3.west)--([yshift=1mm]1.east);
\end{tikzpicture}
\end{figure}

\begin{example}\label{ex: C[0,1] weak* CS}
    $C[0,1]^*$ has the weak$^\ast$ 1-ASD2P$_{2^\omega}$ thanks to Proposition~\ref{prop: dual of Daugavet has w*-CS_infty} (recall that $|C[0,1]|=2^\omega$ since $|C[0,1]|=|C([0,1]\cap\mathbb Q)|\le|\mathbb R|^{\mathbb Q}=(2^\omega)^\omega=2^{\omega\cdot\omega}=2^\omega$ and the opposite inequality is trivial), but it fails the SD2P \cite[Example~1.1]{haller_duality_2015}.
\end{example}

\begin{example}\label{ex: c_0 fails infty CS}
	If $\kappa\ge\omega_1$, then $c_0(\kappa)$ has the 1-ASD2P$_{<\kappa}$ thanks to Example~\ref{ex: c_0(kappa) has CS}, but it fails the SD2P$_\kappa$ due to Theorem~\ref{thm: sd2p iff oh}.
\end{example}

\begin{example}\label{ex: SD2P not= CS}
    It is known that the ASD2P and the SD2P are not equivalent \cite[Example~3.3]{lopez-perez_strong_2019}. The same can be said for the 1-ASD2P$_\kappa$ and the SD2P$_\kappa$. In fact, for every infinite cardinal $\kappa$, there exists a dual Banach space $X^*$ which is $\kappa$-octahedral, but that has the $(-1)$-BCP$_\kappa$ \cite[Theorem~5.13]{ciaci_characterization_2021}. Therefore, thanks to Theorem~\ref{thm: sd2p iff oh}, $X$ has the SD2P$_\kappa$ and it fails the 1-ASD2P$_\kappa$ by Proposition~\ref{prop: X CSP and X sep OH}.
\end{example}

\bibliographystyle{siam}
%\bibliography{references}

\end{document}